\newtheorem{theorem}{Theorem}%
\newtheorem{lemma}[theorem]{Lemma}%
\newtheorem{corollary}[theorem]{Corollary}%
\newtheorem{definition}[theorem]{Definition}%
\newcommand{\injop}[2]{\text{id}_{#1, #2}}
\newcommand{\kminus}{\underline{\kappa}}
\newcommand{\kplus}{\overline{\kappa}}
\newcommand{\cfracminus}{\underline{\mathfrak{c}}}
\newcommand{\cfracplus}{\overline{\mathfrak{c}}}
\let\originalleft\left
\let\originalright\right
\renewcommand{\left}{\mathopen{}\mathclose\bgroup\originalleft}
\renewcommand{\right}{\aftergroup\egroup\originalright}
\colorlet{eqframe}{black!35}   
\newcommand\blfootnote[1]{%
  \begingroup
  \renewcommand\thefootnote{}\footnote{#1}%
  \addtocounter{footnote}{-1}%
  \endgroup
}
\title{Metric Entropy of Analytic Function Classes via Ellipsoidal Methods}
\date{}
\author{Thomas Allard \\ tallard@stanford.edu  \and Helmut Bölcskei \\ hboelcskei@ethz.ch}
\begin{document}

\maketitle

\blfootnote{\label{ack}The authors gratefully acknowledge support by the Lagrange Mathematics and Computing Research Center, Paris, France.}

\abstract{\noindent
We present a systematic methodology for characterizing the metric entropy of 
infinite-dimensional ellipsoids with exponentially decaying semi-axes. The 
approach does not rely on the explicit construction of coverings or packings and 
yields a unified framework for deriving sharp entropy estimates for a wide range 
of analytic function classes, including periodic functions analytic on a strip, 
analytic functions bounded on a disk, and functions of exponential type. In each 
of these cases, our results improve upon the best known bounds in the literature. 
}


\section{Introduction}\label{introduction}

The concept of metric entropy has traditionally played a significant role in various domains of mathematics such as non-linear approximation theory \cite{lorentzMetricEntropyApproximation1966,lorentzApproximationFunctions1966,carlEntropyCompactnessApproximation1990}, non-parametric estimation \cite{haussler1997mutual,Pinskerpaper,donohoCountingBitsKolmogorov2000},
statistical learning theory \cite{hausslerDecisionTheoreticGeneralizations1992,wainwrightHighDimensionalStatistics2019,shalev-shwartzUnderstandingMachineLearning2014}, and
empirical process theory \cite{dudleyEcoleEteProbabilites1984,pollardEmpiricalProcessesTheory1990}.
Recent advances in machine learning theory, more specifically in deep learning, have led to renewed interest in methods for metric entropy computation. 
Indeed, metric entropy is at the heart of the approximation-theoretic characterization of deep neural networks \cite{elbrachterDeepNeuralNetwork2021,shalev-shwartzUnderstandingMachineLearning2014,devore2021neural}.
However, computing the precise value of the metric entropy of a given function class turns out to be notoriously difficult in general; exact expressions are available only in very few simple cases.
It has therefore become common practice to resort to characterizations of the asymptotic behavior as the covering ball radius approaches zero. 
Even this more modest endeavor has turned out daunting in most cases.
A sizeable body of corresponding research exists in the literature \cite{kolmogorovCertainAsymptoticCharacteristics1956,mityaginApproximateDimensionBases1961,vitushkinTheoryTransmissionProcessing1961}.
The work of Donoho \cite{donohoCountingBitsKolmogorov2000} constitutes a canonical example of the type of asymptotic results sought; 
specifically, it provides the exact expression for the leading term in the asymptotic expansion of the metric entropy of unit balls in Sobolev spaces.

The methods for characterizing the asymptotic behavior of metric entropy available in the literature are usually highly specific to the function class under consideration.
The survey \cite[Chapter 7]{shiryayevSelectedWorksKolmogorov1993} illustrates this point in the context of complex-analytic functions. 
An in-depth study of the variety of methods available in the literature leads to the insight that the underlying ellipsoidal structure of the function classes considered can be exploited to formulate a comprehensive methodology and en passant improve many of the best known results.
This will, in fact, be the main goal of the present paper.
A first step toward such a general method was made in 
\cite{luschgySharpAsymptoticsKolmogorov2004, grafSharpAsymptoticsMetric2004,luschgyFunctionalQuantizationGaussian2002,secondpaper,thirdpaper,Pinskerpaper} by computing the metric entropy of infinite-dimensional ellipsoids with semi-axes of regularly varying (typically going to zero as the inverse of a polynomial) or slowly varying (typically going to zero as the inverse of a logarithm) lengths.
This approach was applied in \cite{luschgySharpAsymptoticsKolmogorov2004,secondpaper,thirdpaper,Pinskerpaper} to recover and improve on the above-mentioned result by Donoho.

Three central classes of analytic functions---periodic functions analytic on a strip, analytic functions bounded on a disk, and entire functions of exponential type---share the property that their metric entropy is governed by that of ellipsoids with exponentially decaying semi-axes.
Metric entropy results for these classes exist in the literature, obtained through explicit, case-by-case constructions of coverings and packings, but no unified treatment exploiting this common ellipsoidal structure appears to be available.
Here we address this gap and establish sharp asymptotic metric entropy results for each of these three function classes, improving upon the best known results in the literature in each case.
    These function classes arise naturally in harmonic analysis, sampling
    theory, and time-frequency analysis. 
    The last of the three---entire functions of exponential type---is, by the
    Paley--Wiener theorem \cite[Theorem 19.3]{rudinRealComplexAnalysis1987}, precisely the class of band-limited signals of classical sampling
    theory; 
    the Paley--Wiener and Bernstein spaces are canonical function spaces in this setting \cite{higgins1996sampling}.
    Metric entropy quantifies the effective number of degrees of freedom; 
    for band-limited signals of bandwidth $W$, the metric entropy rate is proportional to $ W \log_2(1/\varepsilon)$ bits per unit time, where $\varepsilon > 0$ is
    the prescribed approximation accuracy \cite{daubechiesConversionImperfectQuantizers2006}. 
    Sharp asymptotics for the fixed-bandwidth Paley--Wiener case were established
    in \cite{thirdpaper}; the present paper treats the broader class of entire functions of exponential type.

Classically, metric entropy has been used primarily to quantify the massiveness 
of sets in function spaces at an order-of-magnitude level and to characterize 
notions such as compactness or minimax rates. In many modern applications, 
however, sharp characterizations of metric entropy are conceptually important, 
as such refinements play a role in concrete quantitative questions. This 
perspective appears, for example, in nonparametric estimation, where refined 
entropy bounds lead to sharper sample-complexity estimates~\cite{Pinskerpaper}. 
In optimal A/D conversion, the required sampling rate at accuracy~$\varepsilon$ 
is governed directly by $H(\varepsilon)$, so improved asymptotics yield more 
accurate bit-rate budgets~\cite{daubechiesConversionImperfectQuantizers2006}. In machine learning, the precision 
of entropy estimates affects the sharpness of lower bounds on the required size of neural 
networks approximating elements of a given function class~\cite{elbrachterDeepNeuralNetwork2021}. 

We conclude this section by defining notation and terminology.
$\mathbb{N}$ and $\mathbb{N}^*$ stand for the set of natural numbers including and, respectively, excluding zero, 
$\mathbb{Z}$ is the set of integers.
$\mathbb{R}$ and $\mathbb{C}$ denote the fields of real and, respectively, complex numbers. We use the generic notation $\mathbb{K}$ to mean that a statement applies both for $\mathbb{K}=\mathbb{R}$ and $\mathbb{K}=\mathbb{C}$.
It will further be convenient to introduce the notation
\begin{equation}
\sigma_{\mathbb{K}}(d) \coloneqq 
\begin{cases}
d, \quad &\text{if } \mathbb{K}=\mathbb{R}, \\
2d, \quad &\text{if } \mathbb{K}=\mathbb{C}.
\end{cases}
\end{equation}
For $d \in \mathbb{N}^*$, we denote the geometric mean of the set $\{\mu_1, \dots, \mu_d\}$ of positive real numbers by
\begin{equation*}
\bar \mu_d 
\coloneqq \prod_{n=1}^d \mu_n^{1/d}.
\end{equation*}
We refer to the closed ball with center $x$, of radius $r$, and with respect to the metric $\rho$ by $B_\rho(x ; r)$.
The subscript $\rho$ will be omitted when there is no room for confusion.
When the metric $\rho$ is induced by the usual $p$-norm, we simply write $\mathcal{B}_p$ for the unit ball $B_\rho(0 ; 1)$.
$\log (\cdot)$ stands for the logarithm to base $2$ and $\ln(\cdot)$ is the natural logarithm.
For $k\in \mathbb{N}^*$, we denote the $k$-fold iterated logarithm by
\begin{equation*}
\log^{(k)}(\cdot) 
\coloneqq \underbrace{\log \circ \cdots \circ \log}_{k \text{ times}} \, (\cdot).
\end{equation*}
Finally, when comparing the asymptotic behavior of the functions $f$ and $g$ as $x \to \ell$, with $\ell \in \mathbb{R}\cup\{-\infty, \infty\}$, we use the notation 
\begin{equation*}
f = o_{x \to \ell}(g) \iff \lim_{x \to \ell} \frac{f(x)}{g(x)} =0
\quad \text{and} \quad 
f = \mathcal{O}_{x \to \ell}(g) \iff \lim_{x \to \ell} \left\lvert \frac{f(x)}{g(x)}\right\rvert \leq C,
\end{equation*}
for some constant $C>0$.
We further indicate asymptotic equivalence according to
\begin{equation*}
f \sim_{x \to \ell} g \iff \lim_{x \to \ell} \frac{f(x)}{g(x)} =1.
\end{equation*}

\section{Metric Entropy of Ellipsoids}\label{sec:meellipsoidssec}

We start with the formal definition of metric entropy.

\begin{definition}[Metric entropy]
Let $(\mathcal{X}, d)$ be a metric space and $\mathcal{K}\subseteq \mathcal{X}$ a compact set. 
An $\varepsilon$-covering of $\mathcal{K}$ with respect to the metric $d$ is a set $\{x_1,...\, ,x_N\} \subseteq \mathcal{X}$  such that for each $x \in \mathcal{K}$, there exists an $i\in \{1, \dots, N\}$ so that $d(x,x_i)\leq \varepsilon$. The $\varepsilon$-covering number $N(\varepsilon ; \mathcal{K}, d)$ is the cardinality of a smallest such $\varepsilon$-covering.
The {metric entropy} of $\mathcal{K}$ is 
\begin{equation*}
H \left(\varepsilon ; \mathcal{K}, d\right)
\coloneqq \log_2 N\left(\varepsilon ; \mathcal{K}, d\right).
\end{equation*}
\end{definition}

\noindent
We shall characterize the metric entropy of infinite-dimensional ellipsoids, 
which we next formally introduce.

\begin{definition}\label{def: Infinite dimensional ellipsoid}
Let $p \in [1, \infty]$.
To a given bounded sequence $\{\mu_n\}_{n \in \mathbb{N}^*}$ of 
non-negative real numbers, we associate the {ellipsoid norm} $\|\cdot\|_{p, \mu}$ on $\ell^p(\mathbb{N^*} ; \mathbb{K})$, defined by 
\begin{equation*}
\|\cdot\|_{p, \mu} \colon x \in \ell^p(\mathbb{N^*} ; \mathbb{K}) \mapsto 
\begin{dcases}
\left(\sum_{n \in \mathbb{N}^*}  \left\lvert x_n/\mu_n \right\rvert^p\right)^{1/p}, &\text{if } 1 \leq p < \infty, \\[.5cm]
\sup_{n \in \mathbb{N}^*} \, \left\lvert x_n/\mu_n \right\rvert,  &\text{if } p= \infty,
\end{dcases}
\end{equation*}
with the convention that $0/0 = 0$.
The $p$-ellipsoid is the unit ball in $\ell^p(\mathbb{N^*} ; \mathbb{K})$ with respect to this norm 
and is denoted by
\begin{equation*}\label{eq: Infinite dimensional ellipsoid definition}
\mathcal{E}_{p}(\{\mu_n\}_{n\in \mathbb{N}^*}) 
\coloneqq 
\left\{ x \in \ell^p(\mathbb{N^*} ; \mathbb{K}) \mid \|x\|_{p, \mu} \leq 1 \right\}.
\end{equation*}

\end{definition}

\noindent
The elements of $\{\mu_n\}_{n \in \mathbb{N}^*}$ are called the {semi-axes} of $\mathcal{E}_{p}(\{\mu_n\}_{n\in \mathbb{N}^*})$.
We simply write $\mathcal{E}_p$ when the choice of semi-axes is clear from the context.
For simplicity of exposition, we assume throughout that the semi-axes $\{\mu_n\}_{n\in \mathbb{N}^*}$ are arranged in non-increasing order, that is, 
$\mu_1 \geq \dots \geq \mu_d \geq \dots \geq  0$.
As announced in the introduction, we restrict ourselves to ellipsoids with semi-axes of exponential decay formalized as follows.

\begin{definition}\label{def:expdecellips}
We say that a smooth (i.e., infinitely differentiable) function
$\psi \colon (0, \infty) \to \mathbb{R}$
is a {decay rate function} with parameter $t^* \geq 0$ if 
\begin{equation}\label{eq:assumptionspsidecayfct}
\psi((t^*, \infty))= (0, \infty)
\quad \text{and} \quad 
t \mapsto \frac{\psi(t)}{t}
\, \text{ is non-decreasing on } (t^*,\infty).
\end{equation}
Further, the ellipsoid $\mathcal{E}_{p}(\{\mu_n\}_{n\in \mathbb{N}^*})$ has {$\psi$-exponentially decaying semi-axes} if 
\begin{equation}\label{eq: that's some big boy exp}
\mu_n
\coloneqq c_0\exp \left\{-\ln (2) \, \psi(n)\right\}, 
\quad \text{for all } n \in \mathbb{N}^* \text{ and some } c_0>0.
\end{equation}
\end{definition}

\noindent
Concrete examples of decay rate functions $\psi$ will be considered in Corollaries~\ref{cor: Complex ellipsoid with exp0}--\ref{cor: Complex ellipsoid with exp2}.
We note that every decay rate function $\psi$ must be  invertible on $(t^*, \infty)$; see Lemma \ref{lem:invertdecaratefct} in Appendix~\ref{sec:proofqqqszracxref}.
The inverse
$\psi^{(-1)}\colon (0,\infty) \to (t^*, \infty)$
of $\psi$ plays a central role in our main result Theorem~\ref{thm: scaling metric entropy infinite ellipsoids exp}.
Before stating this result, we need to introduce two auxiliary functions.

\begin{definition}\label{def:deltapsiexpdec}
Let $\psi$ be a decay rate function.
We define the {$\psi$-average function} according to 
\begin{equation*}
\delta(d)
\coloneqq \frac{1}{d}\sum_{n=1}^{d} (\psi(d) -\psi(n)), \quad \text{for all } d \geq 1,
\end{equation*}
and the {$\psi$-difference function} as
\begin{equation*}
\zeta (d)
\coloneqq \psi(d)  - \psi(d-1), \quad \text{for all } d \geq 2.
\end{equation*}
\end{definition}

\noindent
The main properties of the $\psi$-average and the $\psi$-difference function are summarized in Appendix~\ref{sec:proofqqqszracxref}.
The next theorem constitutes the main result of the paper and characterizes the asymptotic behavior of the metric entropy of infinite-dimensional ellipsoids with $\psi$-exponentially decaying semi-axes.

\begin{theorem}\label{thm: scaling metric entropy infinite ellipsoids exp}
Let $p, q \in [1, \infty]$ and let $\psi$ be a decay rate function. 
There exists an integer-valued function $d_{\varepsilon}$ which can be written in the form
\begin{equation}\label{eq:ppppaaatutyyrieie2}
d_{\varepsilon}
=\psi^{(-1)} \left[\log \left(\varepsilon^{-1}\right) + \log (c_0) \right] + O_{\varepsilon \to 0}(1),
\end{equation}
with $c_0$ as per (\ref{eq: that's some big boy exp}),
such that the  metric entropy of the infinite-dimensional ellipsoid $\mathcal{E}_p$ with $\psi$-exponentially decaying semi-axes $\{\mu_n\}_{n \in \mathbb{N}^*}$ satisfies
\begin{equation}\label{eq:ppppaaatutyyrieie}
H \left(\varepsilon ; \mathcal{E}_p, \|\cdot\|_q \right) 
= \, \sigma_{\mathbb{K}}\left(d_\varepsilon\right)  
\left\{\delta\left(d_\varepsilon\right)  
+ \left(\frac{1}{q}-\frac{1}{p}\right) \log \left( \sigma_{\mathbb{K}}\left(d_\varepsilon\right) \right)
 + O_{d_\varepsilon\to \infty}\left(\zeta\left(d_\varepsilon\right)\right) \right\},
\end{equation}
where $\delta(\cdot)$ and $\zeta (\cdot)$ are the $\psi$-average and the $\psi$-difference function, respectively.
\end{theorem}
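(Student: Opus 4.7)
The plan is to execute the three-step reduction sketched in Section~\ref{sec:binaryexp}: the ellipsoidal structure is given by hypothesis, so I truncate to a suitable finite dimension $d_\varepsilon$ and then invoke the finite-dimensional estimates from Theorems~\ref{thm: metric entropy of finite ellipsoidsLB} and~\ref{thm: metric entropy of finite ellipsoids}. The right scale for $d_\varepsilon$ is dictated by matching the smallest retained semi-axis to $\varepsilon$: since $\mu_n = c_0 \exp(-\ln(2)\psi(n))$, solving $\mu_{d_\varepsilon} \asymp \varepsilon$ yields $\psi(d_\varepsilon) \approx \log(\varepsilon^{-1}) + \log(c_0)$, and inverting $\psi$ recovers (\ref{eq:ppppaaatutyyrieie2}). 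The $O_{\varepsilon \to 0}(1)$ slack there is reserved for two tasks: making the tail approximation error at most $\varepsilon/2$, and ensuring that hypothesis (\ref{eq:thisisassuptionond}) of Theorem~\ref{thm: metric entropy of finite ellipsoids} is fulfilled at the chosen index.

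For the upper bound, I would decompose $x \in \mathcal{E}_p$ as $x = P_{d_\varepsilon} x + (I - P_{d_\varepsilon}) x$, where $P_d$ denotes projection onto the first $d$ coordinates. Then $P_{d_\varepsilon} \mathcal{E}_p = \mathcal{E}_p^{d_\varepsilon}(\{\mu_n\}_{n=1}^{d_\varepsilon})$, and the exponential decay of $\{\mu_n\}$ combined with a H\"older-type argument applied to (\ref{eq:poiuhee})/(\ref{eq:poiuheeinf}) produces a uniform tail bound $\|(I - P_{d_\varepsilon}) x\|_q \leq C\,\mu_{d_\varepsilon+1}$ (the geometric decay of $\mu_n$ makes the suppressed sums convergent in every regime of $p$ and $q$). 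Choosing the additive constant in $d_\varepsilon$ so that $C\mu_{d_\varepsilon+1} \leq \varepsilon/2$, an $(\varepsilon/2)$-covering of the finite-dimensional ellipsoid $\mathcal{E}_p^{d_\varepsilon}$ in $\|\cdot\|_q$ extends by the triangle inequality to an $\varepsilon$-covering of $\mathcal{E}_p$, and Theorem~\ref{thm: metric entropy of finite ellipsoids} controls its cardinality. For the lower bound, zero-padding isometrically embeds $\mathcal{E}_p^{d_\varepsilon}$ into $\mathcal{E}_p$, and projecting any $\varepsilon$-covering onto the first $d_\varepsilon$ coordinates preserves covering radii in $\|\cdot\|_q$, so $N(\varepsilon; \mathcal{E}_p, \|\cdot\|_q) \geq N(\varepsilon; \mathcal{E}_p^{d_\varepsilon}, \|\cdot\|_q)$ and Theorem~\ref{thm: metric entropy of finite ellipsoidsLB} closes the gap.

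Taking logarithms and using
\[
\log\bar\mu_{d_\varepsilon} = \log c_0 - \frac{1}{d_\varepsilon}\sum_{n=1}^{d_\varepsilon}\psi(n), \qquad \log(\varepsilon^{-1}) + \log c_0 = \psi(d_\varepsilon) + O(\zeta(d_\varepsilon)),
\]
the combination $\log(\varepsilon^{-1}) + \log\bar\mu_{d_\varepsilon}$ collapses to $\delta(d_\varepsilon) + O(\zeta(d_\varepsilon))$; multiplication by $\sigma_\mathbb{K}(d_\varepsilon)$ and inclusion of the $(\frac{1}{q} - \frac{1}{p})\log(\sigma_\mathbb{K}(d_\varepsilon))$ term supplied by Theorems~\ref{thm: metric entropy of finite ellipsoidsLB}--\ref{thm: metric entropy of finite ellipsoids} produces (\ref{eq:ppppaaatutyyrieie}).

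The principal obstacle will be controlling the second-order $O(\zeta(d_\varepsilon))$ remainder and verifying the auxiliary hypothesis (\ref{eq:thisisassuptionond}). The $O(1)$ indeterminacy in $d_\varepsilon$ translates, through the discrete increment $\zeta(d) = \psi(d) - \psi(d-1)$, into an $O(\zeta(d_\varepsilon))$ perturbation of the per-coordinate bit budget $\log(\varepsilon^{-1}) + \log\bar\mu_{d_\varepsilon}$; this is then amplified by the $\sigma_\mathbb{K}(d_\varepsilon)$ coordinates to yield exactly the stated error. Checking (\ref{eq:thisisassuptionond}) at the chosen $d_\varepsilon$ reduces, via Theorem~\ref{thm: metric entropy of finite ellipsoidsLB} and the identity $\log\bar\mu_d - \log\mu_d = \delta(d)$, to showing $\delta(d_\varepsilon) \to \infty$; this follows from the monotonicity of $\psi(t)/t$ imposed in (\ref{eq:assumptionspsidecayfct}), which forces $\psi$ to grow at least linearly past $t^*$ and makes the discrete averaging defining $\delta$ unbounded.
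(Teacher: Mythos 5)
Your route is genuinely different from the paper's. You pick $d_\varepsilon$ \emph{constructively} so that $\mu_{d_\varepsilon}\asymp\varepsilon$, then sandwich $N(\varepsilon;\mathcal{E}_p,\|\cdot\|_q)$ between finite-dimensional covering numbers by projection (lower bound) and zero-padding plus a tail estimate (upper bound), and only then read off the asymptotics from Theorems~\ref{thm: metric entropy of finite ellipsoidsLB}--\ref{thm: metric entropy of finite ellipsoids}. The paper instead \emph{defines} $d_\varepsilon$ implicitly, via (\ref{eq: definition of underbar n pt1mg5}), as the smallest $k$ at which the bound of Theorem~\ref{thm: metric entropy of finite ellipsoids} is consistent with the infinite-dimensional covering number itself; then (\ref{eq:ppppaaatutyyrieie}) drops out of minimality in one line, and the truncation machinery (your tail bound appears as Lemma~\ref{lem:lemboundtriangleineqlike}) is deployed only afterwards, inside Lemmata~\ref{lemlkjhgfdaa}--\ref{lemlkjhgfdaa2}, to show that this implicit $d_\varepsilon$ satisfies (\ref{eq:ppppaaatutyyrieie2}). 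In effect the two proofs are each other's logic inverted; they rely on the same machinery, but the paper's implicit definition quietly shields it from a constants-bookkeeping problem that your version hits head-on.

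That problem is a genuine gap in your sketch. You fix the split $\varepsilon = \varepsilon/2 + \varepsilon/2$, choose $d_\varepsilon$ so that the tail obeys $C\mu_{d_\varepsilon+1}\leq\varepsilon/2$, and then invoke Theorem~\ref{thm: metric entropy of finite ellipsoids} at radius $\varepsilon/2$. But that theorem comes with the range restriction $\varepsilon/2\leq 2\mu_{d_\varepsilon}$, which is a separate requirement from the growth hypothesis~(\ref{eq:thisisassuptionond}) that you do address. Taking $d_\varepsilon$ minimal with $C\mu_{d_\varepsilon+1}\leq\varepsilon/2$ gives $\varepsilon<2C\mu_{d_\varepsilon}$, which implies $\varepsilon/2\leq 2\mu_{d_\varepsilon}$ only when $C\leq 2$; and for $p>q$ or $p=\infty$ the tail constant scales like $C\asymp(1-2^{-c q'})^{-1/q'}$ (with $q'$ the relevant H\"older exponent and $c$ the slope bound from Lemma~\ref{lem:decayrelsemiaxesratiol}), which exceeds $2$ as soon as $c$ is small. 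For linear $\psi$ with small slope the ratio $\mu_{d+1}/\mu_d=2^{-c}$ is close to $1$, so there is then \emph{no} integer $d$ satisfying both $C\mu_{d+1}\leq\varepsilon/2$ and $\varepsilon/2\leq 2\mu_d$, and your application of Theorem~\ref{thm: metric entropy of finite ellipsoids} is unlicensed. The repair is to replace the even split by $\alpha\varepsilon+(1-\alpha)\varepsilon$ with $\alpha\leq 2/(C+2)$ and absorb the resulting $\log(\alpha^{-1})$ into the $O(\zeta(d_\varepsilon))$ remainder, which by Lemma~\ref{lem:zetaisod} swallows any $O(1)$ term; but note that the split parameter must then depend on $\psi$, $p$, and $q$, which is exactly what the paper's implicit $d_\varepsilon$ is constructed to avoid having to do.
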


\begin{proof}[Proof.]
See Appendix~\ref{sec:proof-main}.
\end{proof}

\noindent
The proof of Theorem~\ref{thm: scaling metric entropy infinite ellipsoids exp} 
reduces the problem 
to that of covering a finite-dimensional ellipsoid of dimension $d_{\varepsilon}$ satisfying \eqref{eq:ppppaaatutyyrieie2}.
This truncation step relies on Lemmata~\ref{lemlkjhgfdaa}, \ref{lemlkjhgfdaa2}, and \ref{lem:lemboundtriangleineqlike}; see also \cite[Eq. (10)]{FISCHER2020105343} for a weaker variant of Lemma~\ref{lem:lemboundtriangleineqlike} that applies to non-necessarily exponentially-decaying semi-axes.
One then applies results on finite-dimensional ellipsoids to this $d_{\varepsilon}$-dimensional ellipsoid to obtain (\ref{eq:ppppaaatutyyrieie});
we discuss finite-dimensional ellipsoids in Appendix~\ref{sec:finite-dim-elli}
and provide a version of such results adapted to our setting in Theorems \ref{thm: metric entropy of finite ellipsoidsLB} and \ref{thm: metric entropy of finite ellipsoids}.
The truncation approach itself---reducing the infinite-dimensional covering problem to a finite-dimensional one---appears in the literature on entropy numbers of
ellipsoids and diagonal operators \cite{FISCHER2020105343,grafSharpAsymptoticsMetric2004}. What is specific to the present exponential-decay setting is the specific
$\varepsilon$-dependent choice of truncation dimension $d_\varepsilon$ provided in \eqref{eq:ppppaaatutyyrieie2}: it must be calibrated so that the truncation error and the
volume bounds of  Theorems \ref{thm: metric entropy of finite ellipsoidsLB} and \ref{thm: metric entropy of finite ellipsoids} contribute at the same asymptotic order, which is what ultimately yields the
sharp expression in Theorem~\ref{thm: scaling metric entropy infinite ellipsoids exp}.
The choice of $d_{\varepsilon}$ is informed by the idea that semi-axes with indices beyond $d_{\varepsilon}$ should be asymptotically negligible at scale $\varepsilon$, i.e., 
$\mu_{d_{\varepsilon}} \sim_{\varepsilon \to 0} \varepsilon$,
which via $\mu_{d_{\varepsilon}} = c_0\exp \left\{-\ln(2) \,  \psi({d_{\varepsilon}})\right\}$ yields (\ref{eq:ppppaaatutyyrieie2}).

For later reference, we next collect consequences of Theorem~\ref{thm: scaling metric entropy infinite ellipsoids exp} for specific decay rate functions $\psi$.
The first one, Corollary~\ref{cor: Complex ellipsoid with exp0}, treats the case of linear functions $\psi$; while Corollaries~\ref{cor: Complex ellipsoid with exp} and \ref{cor: Complex ellipsoid with exp2} pertain to $\psi$-functions growing super-linearly.
When $\psi $ is a linear function, 
the effective dimension $d_\varepsilon$ grows logarithmically in $\varepsilon^{-1}$.
Combining this observation with volume arguments (see (\ref{eq:aaakjhvaaa})),
we expect the leading term of the metric entropy expression to scale according to $\log^2 \left(\varepsilon^{-1}\right)$.
The next corollary formalizes this insight.

\begin{corollary}\label{cor: Complex ellipsoid with exp0}
    Let $p, q \in [1 , \infty]$, $c > 0$, and
        \begin{equation*}
        \psi \colon t \in (0, \infty) \mapsto c\, t \in \mathbb{R}.
    \end{equation*}
    The metric entropy of the infinite-dimensional ellipsoid $\mathcal{E}_{p}$ with $\psi$-exponentially decaying semi-axes satisfies
    \begin{equation*}
        H \left(\varepsilon ; \mathcal{E}_{p}, \|\cdot\|_q \right)
         = \frac{\alpha}{2c} \log^2 \left(\varepsilon^{-1}\right)  + \frac{\alpha}{c} \left(\frac{1}{q}-\frac{1}{p}\right) \log \left(\varepsilon^{-1}\right) \log^{(2)} \left(\varepsilon^{-1}\right) + O_{\varepsilon\to 0}\left(\log \left(\varepsilon^{-1}\right)\right),
    \end{equation*}
    where $\alpha=1$ if $\mathbb{K}=\mathbb{R}$ and $\alpha=2$ if $\mathbb{K}=\mathbb{C}$.
\end{corollary}

\begin{proof}[Proof.]
See Appendix~\ref{sec:proofscor67}.
\end{proof}

\noindent
We now turn to decay rate functions of super-linear growth, whose corresponding metric entropy asymptotics will be expressed in terms of the Lambert $W$-function, the main properties of which are recalled in Appendix~\ref{sec:WLambertappdix}.

\begin{corollary}\label{cor: Complex ellipsoid with exp}
Let $p, q \in [1, \infty]$, $c,c' > 0$, and
\begin{equation*}
    \psi \colon t \in [1, \infty) \mapsto c\, t \, (\log (t)-c')\in \mathbb{R}.
\end{equation*}
The metric entropy of the infinite-dimensional ellipsoid $\mathcal{E}_{p}$ with $\psi$-exponentially decaying semi-axes satisfies
\begin{align*}
H \left(\varepsilon ; \mathcal{E}_{p}, \|\cdot\|_q \right)
=&\, \frac{\alpha \, c \, 2^{2c'-1}}{\ln(2)}  \exp\left\{2 \beta(\varepsilon) \right\} \left(\beta(\varepsilon)+ \frac{1}{2}\right) \left(1
+ O_{\beta(\varepsilon)\to \infty}\left(\exp\left\{- \beta(\varepsilon) \right\}\right)\right), 
\end{align*}
where $\alpha=1$ if $\mathbb{K}=\mathbb{R}$, $\alpha=2$ if $\mathbb{K}=\mathbb{C}$, and
\begin{equation*}
\beta(\varepsilon) 
\coloneqq W\left( \frac{\ln \left(\varepsilon^{-1}\right)}{2^{c'}c}\right),
\end{equation*}
with $W(\cdot)$ denoting the Lambert $W$-function.
\end{corollary}

\begin{proof}[Proof.]
See Appendix~\ref{sec:proofcor89}.
\end{proof}

\noindent
The next result exploits properties of the Lambert $W$-function to find the explicit asymptotic behavior of the metric entropy in Corollary~\ref{cor: Complex ellipsoid with exp} up to second order.
We remark that inspection of the proof reveals that it is actually possible to obtain the asymptotic behavior up to arbitrary order. Concretely, this can be done by continuing the asymptotic expansion of $\psi^{(-1)}$ in Lemma \ref{lem:lkjodihuzehchehjz} using more terms from Lemma \ref{lem:lbtseriesexp}.
For clarity of exposition, however, we decided to limit the formal statement of the result to second order.

\begin{corollary}\label{cor: Complex ellipsoid with exp2}
Let $p, q \in [1, \infty]$, $c,c' > 0$, and
\begin{equation*}
    \psi \colon t \in [1, \infty) \mapsto c\, t \, (\log (t)-c')\in \mathbb{R}.
\end{equation*}
The metric entropy of the infinite-dimensional ellipsoid  $\mathcal{E}_{p}$ with $\psi$-exponentially decaying semi-axes satisfies
\begin{align*}
H \left(\varepsilon ; \mathcal{E}_{p}, \|\cdot\|_q \right)
= \frac{\alpha \, \log^{2}\left(\varepsilon^{-1}\right)}{2\, c\, {\log^{(2)} \left(\varepsilon^{-1}\right)}} 
\left(1+ \frac{\log^{(3)} \left(\varepsilon^{-1}\right)}{\log^{(2)} \left(\varepsilon^{-1}\right)}
+ o_{\varepsilon\to 0 }\left(\frac{\log^{(3)} \left(\varepsilon^{-1}\right)}{\log^{(2)} \left(\varepsilon^{-1}\right)}\right)\right).
\end{align*}
\end{corollary}

\begin{proof}[Proof.]
See Appendix~\ref{sec:proofcor89}.
\end{proof}

\noindent
We conclude by pointing out that, both in Corollary~\ref{cor: Complex ellipsoid with exp} and Corollary~\ref{cor: Complex ellipsoid with exp2}, the 
asymptotic behavior of metric entropy does not depend on
the parameters $p$ and $q$.

\section{Applications to Complex Analytic Functions}\label{sec:complexanalyticapplications}


    The three function classes considered in this section (periodic functions analytic on a strip, analytic functions bounded on a disk, and entire functions of exponential type)
    are central objects in harmonic analysis, complex analysis, and sampling theory. 
    Their asymptotic metric entropy behavior has been
    characterized before in the literature by evaluating the cardinalities of explicitly constructed coverings and
    packings, 
    leading to proofs that are invariably tedious and highly specific.
    We show that these results are covered by our general theory and improve upon all of them.

\subsection{Periodic Functions Analytic on a Strip}\label{sec:analyticstripperiodic}

We first consider a class of functions that are periodic on the real line and can be analytically continued to a strip in the complex plane.
The formal definition is as follows.

\begin{definition}[Periodic functions analytic on a strip]\label{def:perfctonstrip}
Let $M$ and $s$ be positive real numbers.
We denote by $\mathcal{A}_s(M)$ the class of functions 
$f \colon \mathbb{R} \to \mathbb{C}$
which are $2\pi$-periodic and can be analytically continued to the domain 
\begin{equation*}
\mathcal{S}
\coloneqq \left\{z=x+iy\in \mathbb{C} \mid x\in \mathbb{R} \text{ and } \lvert y \rvert < s \right\},
\end{equation*}
such that
\begin{equation*}
\sup_{z\in \mathcal{S}} \,  \lvert f(z) \rvert \leq M.
\end{equation*}
\end{definition}

\noindent
Note that, by the identity theorem (see e.g. \cite[Corollary 10.18]{rudinRealComplexAnalysis1987}), $f(z)$ in Definition \ref{def:perfctonstrip} is
$2\pi$-periodic on the entire strip $\mathcal{S}$, i.e., $f(z+2\pi)=f(z), z \in \mathcal{S}$.

The metric entropy of the class $\mathcal{A}_s(M)$ endowed with
the metric
\begin{equation*}
d_{2 \pi} \colon (f_1, f_2) \longmapsto \sup_{x \in [0, 2\pi]} \, \lvert f_1(x) - f_2(x) \rvert
\end{equation*}
was characterized in \cite[Chapter 7, Section 2.4]{shiryayevSelectedWorksKolmogorov1993} according to
\begin{equation}\label{eq:pokjhbc}
H \left(\varepsilon ; \mathcal{A}_s(M), d_{2 \pi}\right)
= \frac{2 \ln (2)}{s} \log^2\left(\varepsilon^{-1}\right)  + O_{\varepsilon\to 0}\left(\log \left(\varepsilon^{-1}\right) \log^{(2)} \left(\varepsilon^{-1}\right)\right).
\end{equation}
The standard technique for deriving this result is to construct covering and packing elements via an adequate choice of Fourier series coefficients.
We next demonstrate that our general approach improves upon (\ref{eq:pokjhbc}) by providing a more precise characterization of the $O_{\varepsilon\to 0}(\cdot)$ term.

\begin{theorem}\label{thm:analyticstrip}
Let $M$ and $s$ be positive real numbers.
The metric entropy of the class $\mathcal{A}_s(M)$ equipped with the metric $d_{2 \pi}$ can be expressed as
\begin{equation*}
H \left(\varepsilon ; \mathcal{A}_s(M), d_{2 \pi}\right)
= \frac{2 \ln (2)}{s} \left[\log^2\left(\varepsilon^{-1}\right)  + \log \left(\varepsilon^{-1}\right) \log^{(2)} \left(\varepsilon^{-1}\right)\left(\gamma(\varepsilon) + o_{\varepsilon\to 0}\left(1 \right)\right)\right] ,
\end{equation*}
with $\gamma(\cdot)$ a function satisfying $\lvert\gamma(\varepsilon)\rvert \leq 1$, for all $\varepsilon >0$.
\end{theorem}

\noindent
Theorem~\ref{thm:analyticstrip} improves upon (\ref{eq:pokjhbc}) as follows. While (\ref{eq:pokjhbc}) states that there exists $K>0$, possibly depending on $s$ and $M$, such that
\begin{equation*}
\lim_{\varepsilon \to 0} \, \left\lvert\frac{\frac{s}{2 \ln (2)}H \left(\varepsilon ; \mathcal{A}_s(M), d_{2 \pi}\right) - \log^2\left(\varepsilon^{-1}\right)}{\log \left(\varepsilon^{-1}\right) \log^{(2)}\left(\varepsilon^{-1}\right)}\right\rvert
\leq K,
\end{equation*}
Theorem~\ref{thm:analyticstrip} establishes that the constant $K$ can be taken to be equal to $1$, independently of $s$ and $M$.

\begin{proof}[Proof.]
The elements of $\mathcal{A}_s(M)$ are $2\pi$-periodic functions and can hence be represented in terms of Fourier series according to 
\begin{equation*}
f(x)
= \sum_{k=-\infty}^{\infty} a_k e^{ikx}, 
\quad \text{with } 
a_k \coloneqq \frac{1}{2\pi}  \int_{0}^{2\pi} f(x) \, e^{-ikx} \, dx,
\quad \text{for all } k \in \mathbb{Z}.
\end{equation*}
We shall use the shorthand $a \coloneqq \{a_k\}_{k \in \mathbb{Z}}$.
Following our three-step procedure, we first establish that the body formed by the Fourier series coefficients is of ellipsoidal structure, more specifically it can be inscribed and circumscribed with ellipsoids. 
There is a minor technical adjustment we need to make before we can proceed.
Specifically, so far we only considered one-sided ellipsoids, that is, ellipsoids indexed by $\mathbb{N}^*$ rather than $\mathbb{Z}$.
The Fourier series coefficients $\{{a_k}\}_{k \in \mathbb{Z}}$ are, however, two-sided sequences.
To consolidate this matter, we define the coefficients
\begin{equation}\label{eq:deftildea}
\tilde a_{1} \coloneqq  a_0, \quad
\tilde a_{2 n} \coloneqq  a_n, 
\quad \text{and} \quad 
\tilde a_{2 n + 1} \coloneqq  a_{-n},
\quad \text{for all } n\in \mathbb{N}^*,
\end{equation}
together with the map
\begin{equation*}
\iota \colon f \in \mathcal{A}_s(M) \mapsto \left\{ \tilde a_{n}\right\}_{n \in \mathbb{N}^*}.
\end{equation*}
The ellipsoidal structure of the body formed by the Fourier series coefficients is formalized in the following lemma.

\begin{lemma}[Ellipsoidal structure of $\iota(\mathcal{A}_s(M))$]\label{lem:lemmainclusionofellipsoidsfouryeah}
Let $M$ and $s$ be positive real numbers and set 
\begin{equation}\label{eq:defmu1and2expfour}
\mu_{n}^{(1)} 
\coloneqq M e^{-\frac{s n}{2}}
\quad \text{and} \quad 
\mu_{n}^{(2)} 
\coloneqq \sqrt{2} M e^{-\frac{s (n-1)}{2}},
\quad \text{for all } n\in \mathbb{N}^*.
\end{equation}
Then, we have
\begin{equation*}
\mathcal{E}_{1}\left(\left\{\mu_{n}^{(1)}\right\}_{n \in\mathbb{N}^*}\right)
\subseteq \iota(\mathcal{A}_s(M))
\subseteq \mathcal{E}_{2}\left(\left\{\mu_{n}^{(2)}\right\}_{n \in\mathbb{N}^*}\right).
\end{equation*}
\end{lemma}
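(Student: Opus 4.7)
The plan is to establish the two inclusions separately, both by exploiting Paley--Wiener-type decay of the Fourier coefficients of functions analytic on a strip. The outer inclusion $\iota(\mathcal{A}_s(M)) \subseteq \mathcal{E}_2(\{\mu_n^{(2)}\}_{n\in\mathbb{N}^*})$ will rest on Parseval's identity along horizontal lines in $\mathcal{S}$; the inner inclusion $\mathcal{E}_1(\{\mu_n^{(1)}\}_{n\in\mathbb{N}^*}) \subseteq \iota(\mathcal{A}_s(M))$ will be proved by directly reconstructing $f$ as an absolutely convergent exponential series and bounding it on $\mathcal{S}$.

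For the outer direction, I will fix $f \in \mathcal{A}_s(M)$ and $\sigma \in (0, s)$ and observe that $f(x + i\sigma) = \sum_{k \in \mathbb{Z}} (a_k e^{-k\sigma})\, e^{ikx}$ as a function of $x$, so Parseval's identity gives
\begin{equation*}
\sum_{k \in \mathbb{Z}} |a_k|^2 e^{-2k\sigma} \, = \, \frac{1}{2\pi}\int_0^{2\pi}|f(x+i\sigma)|^2\,dx \, \leq \, M^2.
\end{equation*}
Applying this at heights $\pm \sigma$ and letting $\sigma \to s^{-}$ via Fatou's lemma yields the one-sided bounds $A + B \leq M^2$ and $A + C \leq M^2$, where $A \coloneqq |a_0|^2$, $B \coloneqq \sum_{k \geq 1}|a_k|^2 e^{2ks}$, and $C \coloneqq \sum_{k \geq 1}|a_{-k}|^2 e^{2ks}$. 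Unpacking the relabeling (\ref{eq:deftildea}) and substituting the explicit $\mu_n^{(2)}$, the target inequality $\sum_n |\tilde a_n|^2/(\mu_n^{(2)})^2 \leq 1$ reduces to $A + e^{-s}B + C \leq 2M^2$, which follows by adding the two Paley--Wiener bounds and using $e^{-s}\leq 1$.

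For the inner direction, I will take any sequence $\{\tilde a_n\}_{n\in\mathbb{N}^*}$ with $\sum_n |\tilde a_n|/\mu_n^{(1)} \leq 1$, undo (\ref{eq:deftildea}) to recover a two-sided family $\{a_k\}_{k \in \mathbb{Z}}$, and define $f(z) \coloneqq \sum_{k \in \mathbb{Z}} a_k e^{ikz}$. The hypothesis rewrites as $e^{s/2}|a_0| + \sum_{n \geq 1}|a_n| e^{sn} + e^{s/2}\sum_{n \geq 1}|a_{-n}| e^{sn} \leq M$, which trivially majorizes $\sum_{k \in \mathbb{Z}} |a_k| e^{|k|s}$. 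The latter forces absolute and uniform convergence of the series on every compact subset of $\mathcal{S}$, so $f$ is holomorphic and $2\pi$-periodic there. The pointwise bound $|f(x+iy)| \leq \sum_k |a_k| e^{|k|\,|y|} \leq \sum_k |a_k| e^{|k|s} \leq M$ then gives $f \in \mathcal{A}_s(M)$, and by construction $\iota(f) = \{\tilde a_n\}$.

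The only delicate point is the bookkeeping forced by the one-sided relabeling (\ref{eq:deftildea}): because $\tilde a_{2n}$ and $\tilde a_{2n+1}$ are assigned different semi-axes, the ellipsoid conditions couple the positive- and negative-index Fourier coefficients asymmetrically, producing the mismatched $e^{-s}$ factor in the outer estimate and the $e^{s/2}$ factors in the inner one. The $\sqrt{2}$ prefactor in $\mu_n^{(2)}$ and the half-step shift in the exponents of both $\mu^{(1)}$ and $\mu^{(2)}$ are tuned precisely to absorb this asymmetry; once this is recognized, both inclusions reduce to short inequality manipulations.
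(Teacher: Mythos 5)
Your proof is correct and follows essentially the same route as the paper's: the outer inclusion via Parseval's identity on the horizontal lines $y = \pm\sigma$ followed by a limit $\sigma \to s^{-}$ and adding the two resulting bounds, and the inner inclusion via absolute convergence of the exponential series to produce an analytic $2\pi$-periodic extension bounded by $M$. The only difference is cosmetic: the paper routes the relabeling through an intermediate auxiliary sequence $\tilde\mu_n = M e^{-s\lfloor n/2\rfloor}$ and defers the Parseval and absolute-convergence steps to two small sublemmata, whereas you carry out the same inequalities directly in terms of the two-sided coefficients $a_k$.
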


\begin{proof}[Proof.]
See Appendix~\ref{sec:prooflemmainclusionofellipsoidsfouryeah}.
\end{proof}

\noindent
We next relate the metric $d_{2 \pi}$ to $\ell^q$-metrics in sequence spaces.

\begin{lemma}\label{lem:poiuhgfyyyyyyy}
Let $f_1$ and $f_2$ be $2\pi$-periodic functions with Fourier series coefficients $a^{(1)}$ and $a^{(2)}$, respectively, such that
\begin{equation*}
f_1(x) = \sum_{k=-\infty}^{\infty} a^{(1)}_k \, e^{ikx}
\quad \text{and} \quad 
f_2(x) =  \sum_{k=-\infty}^{\infty} a^{(2)}_k \, e^{ikx} .
\end{equation*}
Then, we have 
\begin{equation*}
\left\| a^{(1)}-a^{(2)}\right\|_{\ell^2(\mathbb{Z})}
\leq d_{2 \pi}(f_1, f_2)
\leq \left\| a^{(1)}-a^{(2)} \right\|_{\ell^1(\mathbb{Z})}.
\end{equation*}
\end{lemma}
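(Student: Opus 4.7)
The plan is to set $g \coloneqq f_1 - f_2$, note that by linearity of the Fourier transform the Fourier coefficients of $g$ are exactly $c \coloneqq a^{(1)} - a^{(2)}$, and then dispose of the two inequalities separately using standard identities from Fourier analysis on the torus.

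For the upper bound $d_{2\pi}(f_1,f_2) \leq \|c\|_{\ell^1(\mathbb{Z})}$, I would argue pointwise: for every $x \in [0, 2\pi]$,
\begin{equation*}
\lvert g(x) \rvert
= \Bigl\lvert \sum_{k \in \mathbb{Z}} c_k \, e^{ikx} \Bigr\rvert
\leq \sum_{k \in \mathbb{Z}} \lvert c_k \rvert \, \lvert e^{ikx} \rvert
= \|c\|_{\ell^1(\mathbb{Z})},
\end{equation*}
where the first step uses the Fourier series representation of $g$ and the second is the triangle inequality together with $|e^{ikx}|=1$. Taking the supremum over $x \in [0, 2\pi]$ on the left delivers the desired bound. (Finiteness of the right-hand side in the nontrivial case follows from the assumption that both $f_1$ and $f_2$ admit convergent Fourier series; if $\|c\|_{\ell^1(\mathbb{Z})} = \infty$, the inequality is vacuous.)

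For the lower bound $\|c\|_{\ell^2(\mathbb{Z})} \leq d_{2\pi}(f_1,f_2)$, I would invoke Parseval's identity for $2\pi$-periodic square integrable functions, namely
\begin{equation*}
\|c\|_{\ell^2(\mathbb{Z})}^2
= \sum_{k \in \mathbb{Z}} \lvert c_k \rvert^2
= \frac{1}{2\pi}\int_0^{2\pi} \lvert g(x) \rvert^2 \, dx
\leq \sup_{x \in [0,2\pi]} \lvert g(x) \rvert^2
= d_{2\pi}(f_1,f_2)^2,
\end{equation*}
where the middle equality is Parseval and the inequality is the elementary estimate of an average by a supremum. Taking square roots finishes the argument.

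I do not expect any real obstacle here; the only mild subtlety is ensuring that Parseval and the termwise bound are justified, which in the context of this paper is immediate since the relevant functions arise as restrictions to the real line of functions analytic on a strip (hence in particular continuous and square integrable on $[0, 2\pi]$), so their Fourier series converge in $L^2$ and uniformly whenever $c \in \ell^1(\mathbb{Z})$.
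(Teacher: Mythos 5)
Your proof is correct and matches the paper's own argument essentially verbatim: the upper bound via the triangle inequality applied termwise to the Fourier series of $f_1-f_2$ followed by taking the supremum over $x$, and the lower bound via Parseval's identity combined with bounding the $L^2$ average by the supremum.
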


\begin{proof}[Proof.]
See Appendix~\ref{sec:prooffouriercoefs}.
\end{proof}

\noindent
Combining Lemma \ref{lem:lemmainclusionofellipsoidsfouryeah} with Lemma \ref{lem:poiuhgfyyyyyyy}, we obtain
\begin{align}
    H \left(\varepsilon ; \mathcal{E}_{1}\left(\left\{\mu_{n}^{(1)}\right\}_{n \in\mathbb{N}^*}\right), \|\cdot\|_2 \right)
    &\leq H \left(\varepsilon ; \mathcal{A}_s(M), d_{2 \pi} \right) \label{eq: ljsjgirlssdnf1bist}\\
    &\leq H \left(\varepsilon ; \mathcal{E}_{2}\left(\left\{\mu_{n}^{(2)}\right\}_{n \in\mathbb{N}^*}\right), \|\cdot\|_1 \right), \label{eq: ljsjgirlssdnf2bist}
\end{align}
where $\{\mu_{n}^{(1)}\}_{n \in\mathbb{N}^*}$ and $\{\mu_{n}^{(2)}\}_{n \in\mathbb{N}^*}$ are according to (\ref{eq:defmu1and2expfour}).
We now wish to apply Corollary~\ref{cor: Complex ellipsoid with exp0} as a proxy for the second and third steps of our procedure.
To this end, first observe that, with $\psi(t)= \frac{s}{2 \ln (2)} t$, we have
\begin{equation*}
\mu_n^{(1)} = M \exp \left\{-\ln (2) \, \psi(n)\right\}
\quad \text{and} \quad 
\mu_n^{(2)} = \sqrt{2} M e^{s/2}  \exp \left\{-\ln (2) \, \psi(n)\right\},
\end{equation*}
for all $n \in \mathbb{N}^*$, showing that the semi-axes $\{\mu_n^{(1)}\}_{n\in\mathbb{N}^*}$ and $\{\mu_n^{(2)}\}_{n\in\mathbb{N}^*}$ are $\psi$-exponentially decaying.
We are now in a position to apply Corollary~\ref{cor: Complex ellipsoid with exp0} with $c=\frac{s}{2 \ln (2)}$ and $\alpha=2$ to get
\begin{align}
&H \left(\varepsilon ; \mathcal{E}_{2}\left(\left\{\mu_{n}^{(2)}\right\}_{n \in\mathbb{N}^*}\right), \|\cdot\|_1 \right)  \label{eq: htjfkldoifguirhefdkjl21bist} \\
&= \,  \frac{2 \ln (2)}{s} \log^2\left(\varepsilon^{-1}\right)  +  \frac{2 \ln (2)}{s}  \log \left(\varepsilon^{-1}\right) \log^{(2)} \left(\varepsilon^{-1}\right) \left(1+ o_{\varepsilon\to 0}\left(1 \right)\right) \label{eq: htjfkldoifguirhefdkjl22bist}
\end{align}
and 
\begin{align}
&H \left(\varepsilon ; \mathcal{E}_{1}\left(\left\{\mu_{n}^{(1)}\right\}_{n \in\mathbb{N}^*}\right), \|\cdot\|_2 \right) \label{eq: htjfkldoifguirhefdkjl31bist}\\
&= \,  \frac{2 \ln (2)}{s} \log^2\left(\varepsilon^{-1}\right)  +  \frac{2 \ln (2)}{s}  \log \left(\varepsilon^{-1}\right) \log^{(2)} \left(\varepsilon^{-1}\right) \left(-1+ o_{\varepsilon\to 0}\left(1 \right)\right).\label{eq: htjfkldoifguirhefdkjl32bist}
\end{align}
Inserting (\ref{eq: htjfkldoifguirhefdkjl21bist})--(\ref{eq: htjfkldoifguirhefdkjl22bist}) and (\ref{eq: htjfkldoifguirhefdkjl31bist})--(\ref{eq: htjfkldoifguirhefdkjl32bist}) in (\ref{eq: ljsjgirlssdnf1bist})--(\ref{eq: ljsjgirlssdnf2bist}) yields the desired result
\begin{equation*}
H \left(\varepsilon ; \mathcal{A}_s(M), d_{2 \pi}\right)
= \frac{2 \ln (2)}{s} \left[\log^2\left(\varepsilon^{-1}\right)  + \log \left(\varepsilon^{-1}\right) \log^{(2)} \left(\varepsilon^{-1}\right)\left(\gamma(\varepsilon) + o_{\varepsilon\to 0}\left(1 \right)\right)\right] ,
\end{equation*}
with $\lvert \gamma(\varepsilon)\rvert \leq 1$, for all $\varepsilon >0$.
\end{proof}

\noindent
We conclude by noting that thresholding infinite-dimensional ellipsoids in the Fourier series sequence space and covering through the resulting finite-dimensional ellipsoids implicitly amounts to covering the class $\mathcal{A}_s(M)$ by finite Fourier series expansions, i.e., by trigonometric polynomials.

\subsection{Functions Bounded on a Disk}\label{sec: ana 2 bounded disk}

We next consider functions that are analytic and bounded on a disk, formally defined as follows.

\begin{definition}[Analytic functions bounded on a disk]
Let $M$ and $r'$ be positive real numbers. 
We denote by $\mathcal{A}(r' ; M)$ the class of functions $f$ that are analytic on the open disk $D(0 ; r')$ in the complex plane centered at $0$ and of radius $r'$, and satisfy
\begin{equation*}
\sup_{z\in D(0 ; r')} \lvert f(z) \rvert \leq M.
\end{equation*}
\end{definition}

\noindent
Based on our three-step procedure, we now characterize the asymptotic behavior of the metric entropy of the class $\mathcal{A}(r' ; M)$ endowed with the metric
\begin{equation}\label{eq: fnbhgvgvfd}
 d_{r} \colon (f_1, f_2) \longmapsto \sup_{z \in D(0;r)} \, \lvert f_1(z)- f_2(z)\rvert,
\end{equation}
where we will assume throughout that $0<r<r'$.

\noindent

The metric entropy of this function class is of relevance, inter alia, in control theory and signal processing \cite{zamesMetricComplexityCausal1977,zamesNoteMetricDimension1993} as well as in neural network theory \cite{hutterMetricEntropyLimits2022},
with the best-known result due to 
Vitu\v{s}kin~\cite[Chapter 10, Theorem 4]{lorentzApproximationFunctions1966} given by
\begin{equation}\label{eq:pokjhbc211}
    H \left(\varepsilon ; \mathcal{A}(r' ; M), d_{ r}\right)
    = \frac{\log^2\left(\varepsilon^{-1}\right)}{\log(r'/r)}   + \mathcal{O}_{\varepsilon\to 0}\left(\log \left(\varepsilon^{-1}\right) \log^{(2)} \left(\varepsilon^{-1}\right) \right).
\end{equation}
The standard technique for deriving this result is based on explicit constructions of coverings and packings via adequate choices of Taylor series coefficients of functions in $\mathcal{A}(r' ; M)$.
Again, we shall show that our general approach improves upon (\ref{eq:pokjhbc211}) by providing a more precise characterization of the second-order term,
and does so without resorting to the explicit construction of coverings and packings.

\begin{theorem}\label{thm: second look analytic bounded}
Let $r$, $r'$, and $M$ be positive real numbers such that $r'>r$.
The metric entropy of the class $\mathcal{A}(r' ; M)$ equipped with the metric $d_{r}$ can be expressed as 
\begin{equation*}
H \left(\varepsilon ; \mathcal{A}(r' ; M), d_{r}\right)
= \frac{\log^2\left(\varepsilon^{-1}\right)}{\log (r'/r)} + \frac{\log \left(\varepsilon^{-1}\right)}{\log (r'/r)}  \log^{(2)} \left(\varepsilon^{-1}\right) \left(\gamma(\varepsilon) + o_{\varepsilon\to 0}\left(1 \right)\right),
\end{equation*}
with $\gamma(\cdot)$ a function satisfying $\lvert\gamma(\varepsilon)\rvert \leq 1$, for all $\varepsilon >0$.
\end{theorem}

\noindent
Theorem~\ref{thm: second look analytic bounded} improves upon (\ref{eq:pokjhbc211}) as follows. While (\ref{eq:pokjhbc211}) states that there exists $K > 0$, possibly depending on $r, r'$, and $M$, such that
\begin{equation*}
\lim_{\varepsilon \to 0} \,  \left\lvert\frac{\log (r'/r)\, H \left(\varepsilon ; \mathcal{A}(r' ; M), d_{r}\right) - \log^2\left(\varepsilon^{-1}\right)}{\log \left(\varepsilon^{-1}\right) \log^{(2)}\left(\varepsilon^{-1}\right)}\right\rvert
\leq K,
\end{equation*}
Theorem~\ref{thm: second look analytic bounded} establishes that the constant $K$ can be taken to be equal to $1$, independently of $r, r'$, and $M$.

\begin{proof}[Proof.]
Let $\{a_k\}_{k\in\mathbb{N}}$ be the sequence of Taylor series coefficients of $f$, which exists by analyticity, i.e., 
\begin{equation*}
f(z)
=\sum_{k=0}^\infty a_k z^k,
\quad \text{for all } z\in D(0 ; r').
\end{equation*}
We start by defining the embedding
\begin{equation}\label{eq:defiotatheembedd}
\iota \colon f \in \mathcal{A}(r' ; M) \mapsto \{\tilde a_k\}_{k\in\mathbb{N}} \in \ell^{\infty}(\mathbb{N}),
\quad \text{with } \tilde a_k \coloneqq a_k r^k,
\end{equation}
where $\{\tilde a_k\}_{k\in\mathbb{N}}\in \ell^{\infty}(\mathbb{N})$ is a direct consequence of Cauchy's estimate (\ref{eq: bound an coefs}).
As in the proof of Theorem~\ref{thm:analyticstrip}, we next inscribe and circumscribe $\iota(\mathcal{A}(r' ; M))$ with ellipsoids.

\begin{lemma}[Ellipsoidal structure of $\mathcal{A}(r' ; M)$]\label{lem:Comparison with ellipsoidbddiskup}
Let $r$, $r'$, and $M$ be positive real numbers such that $r'>r$, and let 
\begin{equation}\label{eq:defsemaxespokji}
\mu_k \coloneqq M \exp\left\{-k \ln (r'/r) \right\},
\quad \text{for all } k\in \mathbb{N}.
\end{equation}
Then, we have
\begin{equation*}
\mathcal{E}_{1}(\{\mu_k\}_{k\in\mathbb{N}})
\subseteq \iota \left(\mathcal{A}(r' ; M) \right) 
\subseteq \mathcal{E}_{2}(\{\mu_k\}_{k\in\mathbb{N}}).
\end{equation*}
\end{lemma}

\begin{proof}[Proof.]
See Appendix~\ref{sec:proofComparison with ellipsoidbddiskup}.
\end{proof}

\noindent
We next relate the metric $ d_{r}$ to $\ell^q$-metrics in sequence spaces.

\begin{lemma}\label{lem: Bounds on analytic norm}
Let $r$ be a positive real number and let $f_1, f_2$ be functions that are analytic on $D(0; r)$, with respective Taylor series expansions
\begin{equation*}
f_1(z) = \sum_{k=0}^\infty a_{k}^{(1)} z ^{k}
\quad \text{and} \quad 
f_2(z) = \sum_{k=0}^\infty a_{k}^{(2)} z ^{k}.
\end{equation*}
Then, we have
\begin{equation}\label{eq: mlkjhgfcz}
\left\|\tilde a^{(1)} - \tilde a^{(2)}\right\|_{\ell^2}
\leq d_{r} (f_1,f_2)
\leq\left\|\tilde a^{(1)}-\tilde a^{(2)}\right\|_{\ell^1},
\end{equation}
where 
\begin{equation*}
\tilde a^{(1)} \coloneqq \left\{a_{k}^{(1)} r^{k}\right\}_{k \in \mathbb{N}}
\quad \text{and} \quad 
\tilde a^{(2)} \coloneqq \left\{a_{k}^{(2)} r^{k}\right\}_{k \in \mathbb{N}}.
\end{equation*}
\end{lemma}

\begin{proof}[Proof.]
See Appendix~\ref{sec:prooftaylorcoefs}.
\end{proof}

\noindent
Combining Lemma \ref{lem:Comparison with ellipsoidbddiskup} with Lemma \ref{lem: Bounds on analytic norm}, we obtain
\begin{equation}\label{eq: htjfkldoifguirhefdkjl}
H \left(\varepsilon ; \mathcal{E}_1(\{\mu_k\}_{k\in\mathbb{N}}), \|\cdot\|_2 \right)
\leq H \left(\varepsilon ; \mathcal{A}(r' ; M), d_{ r}\right)
\leq H \left(\varepsilon ; \mathcal{E}_2(\{\mu_k\}_{k\in\mathbb{N}}), \|\cdot\|_1 \right),
\end{equation}
where $\{\mu_k\}_{k\in\mathbb{N}}$ is as per (\ref{eq:defsemaxespokji}).
We can now apply Corollary~\ref{cor: Complex ellipsoid with exp0} with $c= \log (r'/r)$ to obtain
\begin{align}
H \left(\varepsilon ; \mathcal{E}_2(\{\mu_k\}_{k\in\mathbb{N}}), \|\cdot\|_1 \right)
=& \,  \frac{\log^2\left(\varepsilon^{-1}\right)}{\log (r'/r)}  \label{eq: htjfkldoifguirhefdkjl21}\\
& \, +  \frac{2}{\log (r'/r)} \log \left(\varepsilon^{-1}\right) \log^{(2)} \left(\varepsilon^{-1}\right) \left(\frac{1}{2}+ o_{\varepsilon\to 0}\left(1 \right)\right) \label{eq: htjfkldoifguirhefdkjl22}
\end{align}
and 
\begin{align}
H \left(\varepsilon ; \mathcal{E}_1(\{\mu_k\}_{k\in\mathbb{N}}), \|\cdot\|_2 \right)
=& \,  \frac{\log^2\left(\varepsilon^{-1}\right)}{\log (r'/r)}  \label{eq: htjfkldoifguirhefdkjl31}\\
& \, +  \frac{2}{\log (r'/r)} \log \left(\varepsilon^{-1}\right) \log^{(2)} \left(\varepsilon^{-1}\right) \left(-\frac{1}{2}+ o_{\varepsilon\to 0}\left(1 \right)\right).\label{eq: htjfkldoifguirhefdkjl32}
\end{align}
Inserting (\ref{eq: htjfkldoifguirhefdkjl21})--(\ref{eq: htjfkldoifguirhefdkjl22}) and (\ref{eq: htjfkldoifguirhefdkjl31})--(\ref{eq: htjfkldoifguirhefdkjl32}) in (\ref{eq: htjfkldoifguirhefdkjl}) yields the desired result
\begin{equation*}
H \left(\varepsilon ; \mathcal{A}(r' ; M), d_{r}\right)
= \frac{\log^2\left(\varepsilon^{-1}\right)}{\log (r'/r)} + \frac{\log \left(\varepsilon^{-1}\right)}{\log (r'/r)}  \log^{(2)} \left(\varepsilon^{-1}\right) \left(\gamma(\varepsilon) + o_{\varepsilon\to 0}\left(1 \right)\right) ,
\end{equation*}
with $\lvert \gamma(\varepsilon)\rvert \leq 1$, for all $\varepsilon >0$.
\end{proof}

\noindent
We conclude by noting that thresholding infinite-dimensional ellipsoids in the Taylor series sequence space and covering through the resulting finite-dimensional ellipsoids implicitly amounts to covering the class $ \mathcal{A}(r' ; M)$ by finite Taylor series expansions, i.e., by polynomials, in a complex variable.

\subsection{Functions of Exponential Type}\label{sec: ana 2 exp type}

We finally consider functions of exponential type \cite[Chapter 19]{rudinRealComplexAnalysis1987} defined as follows.

\begin{definition}[Functions of exponential type]\label{def: exp type analytic}
    An entire function $f$ is said to be of {exponential type} if there exist positive real constants $A$ and $C$ such that, for all $z\in \mathbb{C}$, 
    \begin{equation}\label{eq: popoloihj}
        \lvert f(z) \rvert \leq C e^{A\lvert z \rvert}.
    \end{equation}
    We write $\mathcal{F}_{\text{exp}}^{A,C}$ for the class of entire functions of exponential type with constants $A$ and $C$.
\end{definition}

\noindent
Functions of exponential type appear naturally in many practical applications, mainly as they can be identified, through the Paley--Wiener theorem \cite[Theorem 19.3]{rudinRealComplexAnalysis1987}, with band-limited functions.
For instance, in \cite{daubechiesConversionImperfectQuantizers2006} the metric entropy (rate) of band-limited signals plays a fundamental role in assessing the ultimate performance limits of
analog-to-digital converters.

The best known result on the metric entropy of $\mathcal{F}_{\text{exp}}^{A,C}$ is \cite[Chapter 7, Theorem XX]{shiryayevSelectedWorksKolmogorov1993}
\begin{equation}\label{eq:prevratekolmexptype}
H \left(\varepsilon ; \mathcal{F}_{\text{exp}}^{A,C}, d_{1} \right)
\sim_{\varepsilon \to 0} \frac{\log^{2}\left(\varepsilon^{-1}\right)}{{\log^{(2)} \left(\varepsilon^{-1}\right)}},
\end{equation}
where $d_1$ is the metric defined in (\ref{eq: fnbhgvgvfd}) under the choice $r=1$.
We improve upon 
\eqref{eq:prevratekolmexptype} by providing a compact functional expression for the asymptotic behavior of 
$H(\varepsilon;\mathcal{F}^{A,C}_{\exp},d_{1})$, from which not only the 
second-order term but, if desired, all higher-order terms can be obtained in a systematic manner. 
Our main structural bounds are given in the following theorem.

\begin{theorem}\label{thm:exptypecompanlyticfctme}
Let $A$ and $C$ be positive real constants.
The metric entropy of the class $\mathcal{F}_{\text{exp}}^{A,C}$ (as per Definition~\ref{def: exp type analytic})
equipped with the metric $d_{1}$ satisfies
\begin{align*}
    & \left(\frac{e A}{2} \right)^2\,\frac{\exp\left\{2 \beta_1(\varepsilon) \right\}}{\ln(2)}  \left[\beta_1(\varepsilon)+ \frac{1}{2}\right] \left[1
+ O_{\beta_1(\varepsilon)\to \infty}\left(\exp\left\{- \beta_1(\varepsilon) \right\}\right)\right] \\
    & \leq  H \left(\varepsilon ; \mathcal{F}_{\text{exp}}^{A,C}, d_{1} \right)  \\
    & \leq \left(e  A\right)^2\,\frac{\exp\left\{2 \beta_2(\varepsilon) \right\}}{\ln(2)} \left[\beta_2(\varepsilon) + \frac{1}{2}\right] \left[1
+ O_{\beta_2(\varepsilon)\to \infty}\left(\exp\left\{- \beta_2(\varepsilon) \right\}\right)\right],
\end{align*}
where 
\begin{equation*}
\beta_1(\varepsilon) 
\coloneqq  W\left( \frac{2 \ln \left(\varepsilon^{-1}\right)}{e \,  A}\right)
\quad \text{and} \quad
\beta_2(\varepsilon) 
\coloneqq  W\left( \frac{\ln \left(\varepsilon^{-1}\right)}{e \,   A}\right),
\end{equation*}
with $W$ denoting the Lambert $W$-function (cf. Appendix~\ref{sec:WLambertappdix}).
\end{theorem}

\begin{proof}[Proof.]
See Appendix~\ref{sec:proofthm14cor15ss}.
\end{proof}

\noindent
It is not immediately clear from Theorem~\ref{thm:exptypecompanlyticfctme} 
that the characterization it provides constitutes an improvement over the best 
known bound~(\ref{eq:prevratekolmexptype}). We therefore specialize the bounds of 
Theorem~\ref{thm:exptypecompanlyticfctme} via 
Corollary~\ref{cor: Complex ellipsoid with exp2} to make the second-order term in 
the asymptotic expansion of the metric entropy of 
$\mathcal{F}_{\text{exp}}^{A,C}$ explicit.

\begin{corollary}\label{cor:exptypecompanlyticfctme}
Let $A$ and $C$ be positive real constants.
The metric entropy of the class $\mathcal{F}_{\text{exp}}^{A,C}$ (as per Definition~\ref{def: exp type analytic}) equipped with the metric $d_{1}$ satisfies
        \begin{equation*}
        H \left(\varepsilon ; \mathcal{F}_{\text{exp}}^{A,C}, d_{1} \right)
= \frac{\log^{2}\left(\varepsilon^{-1}\right)}{{\log^{(2)} \left(\varepsilon^{-1}\right)}} \left(1+ \frac{\log^{(3)} \left(\varepsilon^{-1}\right)}{\log^{(2)} \left(\varepsilon^{-1}\right)} + o_{\varepsilon\to 0}\left(\frac{\log^{(3)} \left(\varepsilon^{-1}\right)}{\log^{(2)} \left(\varepsilon^{-1}\right)}\right)\right) .
    \end{equation*}
\end{corollary}

\begin{proof}[Proof.]
See Appendix~\ref{sec:proofthm14cor15ss}.
\end{proof}

\paragraph{Acknowledgments.}
The authors would like to thank V. Abadie for drawing their attention to the Lambert $W$-function.

\bibliography{main}

\appendix

\section{Properties of Decay Rate Functions}\label{sec:proofqqqszracxref}

\begin{lemma}\label{lem:invertdecaratefct}
Let $\psi$ be a decay rate function with parameter $t^*$.
Then, $\psi$ is strictly increasing and invertible on $(t^*, \infty)$.
\end{lemma}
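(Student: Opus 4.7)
The plan is to deduce strict monotonicity directly from the two axioms (\ref{eq:assumptionspsidecayfct}), and then invoke continuity of $\psi$ (which follows from smoothness) together with the surjectivity condition $\psi((t^*,\infty))=(0,\infty)$ to obtain a bijection from $(t^*,\infty)$ onto $(0,\infty)$.

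First I would record the basic observation that $\psi(t)>0$ for every $t\in(t^*,\infty)$. This is immediate from $\psi((t^*,\infty))=(0,\infty)$, since this set equality (as an image) forces every value attained on $(t^*,\infty)$ to lie in $(0,\infty)$.

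The core step is to prove strict monotonicity by contradiction. Suppose there exist $t_1,t_2\in(t^*,\infty)$ with $t_1<t_2$ and $\psi(t_1)\geq\psi(t_2)$. Since $\psi(t_2)>0$ by the previous observation and $t_1<t_2$, we have
\begin{equation*}
\frac{\psi(t_1)}{t_1}\;\geq\;\frac{\psi(t_2)}{t_1}\;>\;\frac{\psi(t_2)}{t_2},
\end{equation*}
which contradicts the assumption that $t\mapsto\psi(t)/t$ is non-decreasing on $(t^*,\infty)$. Hence $\psi$ is strictly increasing on $(t^*,\infty)$.

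Finally, strict monotonicity immediately gives injectivity on $(t^*,\infty)$, while the hypothesis $\psi((t^*,\infty))=(0,\infty)$ gives surjectivity onto $(0,\infty)$. Combining these, $\psi\colon(t^*,\infty)\to(0,\infty)$ is a bijection and hence invertible, with well-defined inverse $\psi^{(-1)}\colon(0,\infty)\to(t^*,\infty)$. (Smoothness of $\psi$ together with strict monotonicity ensures that the inverse is continuous, which is convenient for the later applications but not needed for the statement of the lemma.) There is no real obstacle here; the only subtlety worth flagging is that the axiom on $t\mapsto\psi(t)/t$ is a priori only a non-decreasing assumption, yet the positivity of $\psi$ upgrades this to strict monotonicity of $\psi$ itself through the displayed chain of inequalities.
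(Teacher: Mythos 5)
Your proof is correct and follows essentially the same route as the paper's: positivity of $\psi$ on $(t^*,\infty)$ (from the surjectivity axiom) together with the non-decreasing property of $t\mapsto\psi(t)/t$ yields strict monotonicity, from which invertibility follows. The only cosmetic difference is that you argue by contradiction whereas the paper derives the strict inequality $\psi(t_2)-\psi(t_1)>0$ directly; the underlying chain of inequalities is the same.
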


\begin{proof}[Proof.]
Fix $t_1, t_2 \in (t^*, \infty)$ such that $t_1<t_2$. 
We then have 
\begin{equation}\label{eq:decratestrctincr}
\psi(t_2) - \psi(t_1)
= t_2 \left[ \frac{\psi(t_2)}{t_2} - \frac{\psi(t_1)}{t_2}  \right]
>t_2 \left[ \frac{\psi(t_2)}{t_2} - \frac{\psi(t_1)}{t_1}  \right]
\geq 0.
\end{equation}
The inequality (\ref{eq:decratestrctincr}) directly implies that $\psi$ is strictly increasing on $(t^*, \infty)$.
Combined with the continuity of $\psi$ on $(t^*, \infty)$, which follows by definition, this allows us to conclude that $\psi$ is invertible on $(t^*, \infty)$.
\end{proof}

\begin{lemma}\label{lem:qqqszracxref}
Let $\psi$ be a decay rate function.
The $\psi$-average function $\delta$ and the $\psi$-difference function $\zeta$ are related according to
\begin{equation*}
\frac{\sigma_{\mathbb{K}}(d )\, \delta(d ) 
- \sigma_{\mathbb{K}}(d -1)\, \delta(d-1)}{\sigma_{\mathbb{K}}(d )}
= \zeta(d)\, (1-1/d), 
\quad \text{for all } d\geq 2.
\end{equation*}
\end{lemma}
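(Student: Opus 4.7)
The identity to prove is essentially algebraic and the key observation is that $\sigma_{\mathbb{K}}$ is linear in its argument (it equals either $d$ or $2d$), so the ratio $\sigma_{\mathbb{K}}(d-1)/\sigma_{\mathbb{K}}(d)$ simplifies cleanly to $(d-1)/d$ in both cases. This reduces the LHS to $\delta(d) - \frac{d-1}{d}\delta(d-1)$, independently of whether $\mathbb{K}=\mathbb{R}$ or $\mathbb{K}=\mathbb{C}$.

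The plan is then to expand the $\psi$-average function using its definition. Writing
\begin{equation*}
d\,\delta(d) = d\,\psi(d) - \sum_{n=1}^d \psi(n),
\qquad
(d-1)\,\delta(d-1) = (d-1)\,\psi(d-1) - \sum_{n=1}^{d-1}\psi(n),
\end{equation*}
I would subtract the two, observing that the partial sums telescope to leave just $\psi(d)$, yielding
\begin{equation*}
d\,\delta(d) - (d-1)\,\delta(d-1) = d\,\psi(d) - (d-1)\,\psi(d-1) - \psi(d) = (d-1)\bigl(\psi(d)-\psi(d-1)\bigr).
\end{equation*}
Recognizing $\psi(d)-\psi(d-1)$ as $\zeta(d)$ and dividing through by $d$ then gives $\delta(d) - \frac{d-1}{d}\delta(d-1) = \zeta(d)\,(1-1/d)$, which is the claimed identity.

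There is essentially no obstacle here; the only point requiring a brief justification is the cancellation of $\sigma_{\mathbb{K}}$ factors, which I would handle in one line by noting that $\sigma_{\mathbb{K}}(d) = \alpha d$ with $\alpha \in \{1,2\}$, so that $\sigma_{\mathbb{K}}(d-1)/\sigma_{\mathbb{K}}(d) = (d-1)/d$ regardless of $\mathbb{K}$. The rest is a one-step telescoping computation.
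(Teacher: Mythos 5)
Your proof is correct and follows essentially the same route as the paper: both reduce the statement to the identity $d\,\delta(d) - (d-1)\,\delta(d-1) = (d-1)\zeta(d)$ via the telescoping of $\sum\psi(n)$, and both use the linearity $\sigma_{\mathbb{K}}(d)=\alpha d$ to cancel the $\sigma_{\mathbb{K}}$ factors. The only cosmetic difference is that you cancel the $\sigma_{\mathbb{K}}$ factors up front while the paper reinstates them at the end.
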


\begin{proof}[Proof.]
We fix $d\geq 2$ and note the following chain of identities
\begin{align*}
d\, \delta(d) 
- (d-1)\,  \delta(d-1) 
=& \, \sum_{n=1}^d \left(\psi(d)-\psi(n)\right) - \sum_{n=1}^{d-1} \left(\psi(d-1)-\psi(n)\right) \\
=& \, d\, \psi(d) - (d-1)\, \psi(d-1) + \sum_{n=1}^{d-1} \psi(n) -  \sum_{n=1}^d \psi(n) \\
=& \, d \, \psi(d) - (d-1) \, \psi(d-1) - \psi(d) \\
=& \, (d-1)(\psi(d)-\psi(d-1))\\
=& \, (d-1)\, \zeta(d) = d\, \zeta(d)\, (1-1/d).
\end{align*}
Rewriting this relation according to
\begin{equation*}
\frac{\sigma_{\mathbb{K}}(d) \,  \delta(d ) 
- \sigma_{\mathbb{K}}(d -1) \, \delta(d-1)}{\sigma_{\mathbb{K}}(d)}
= \zeta(d)\, (1-1/d)
\end{equation*}
finishes the proof.
\end{proof}

\begin{lemma}\label{lem:zetaisod}
Let $\psi$ be a decay rate function with parameter $t^*$.
There exists a positive real number $\kappa$ such that the $\psi$-difference function $\zeta$ satisfies
\begin{equation*}
\zeta(d)\geq \kappa,
\quad \text{for all } \, d > t^*+2.
\end{equation*}
\end{lemma}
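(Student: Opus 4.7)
The plan is to exploit the non-decreasing property of $t \mapsto \psi(t)/t$ on $(t^*, \infty)$ twice in succession. First I would apply this monotonicity to the pair $(d-1, d)$, which for $d > t^*+2$ lies entirely in $(t^*, \infty)$, to obtain
\begin{equation*}
\frac{\psi(d)}{d} \geq \frac{\psi(d-1)}{d-1}.
\end{equation*}
Rearranging gives $(d-1)\psi(d) \geq d\, \psi(d-1)$, which after subtracting $(d-1)\psi(d-1)$ from both sides yields $(d-1)\zeta(d) = (d-1)(\psi(d)-\psi(d-1)) \geq \psi(d-1) \geq 0$, and, equivalently,
\begin{equation*}
\zeta(d) \geq \frac{\psi(d)}{d}.
\end{equation*}

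Second, I would apply the monotonicity a second time to the pair $(t^*+1, d)$. Since $d > t^*+2 > t^*+1 > t^*$, this yields
\begin{equation*}
\frac{\psi(d)}{d} \geq \frac{\psi(t^*+1)}{t^*+1}.
\end{equation*}
Combining the two inequalities and setting $\kappa \coloneqq \psi(t^*+1)/(t^*+1)$ finishes the argument, with $\kappa$ being strictly positive because $\psi$ maps $(t^*, \infty)$ onto $(0, \infty)$ by the first part of the definition of a decay rate function.

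There is no real obstacle here; the lemma is essentially a direct consequence of the two defining properties of a decay rate function, namely the surjectivity onto $(0, \infty)$ (which supplies a positive lower bound on $\psi$ at some anchor point $t^*+1$) and the monotonicity of $\psi(t)/t$ (which both transfers consecutive differences into pointwise values of $\psi/t$ and transports the anchor-point bound forward to all larger $d$). The margin $t^*+2$ rather than $t^*+1$ in the hypothesis is merely a convenient buffer that makes $d-1$ comfortably exceed $t^*$.
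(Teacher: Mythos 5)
Your proof is correct and follows essentially the same route as the paper's: both exploit the monotonicity of $t \mapsto \psi(t)/t$ on $(t^*,\infty)$ first to convert the consecutive difference $\zeta(d)$ into a pointwise lower bound of the form $\psi(\cdot)/(\cdot)$, and then a second time to push that quantity back to a fixed anchor near $t^*$, with strict positivity of $\kappa$ supplied by the surjectivity requirement $\psi((t^*,\infty))=(0,\infty)$. The only cosmetic differences are that the paper arrives at $\zeta(d)\geq \psi(d-1)/(d-1)$ via a slightly different algebraic rearrangement and anchors at $\lfloor t^*+1\rfloor$, while you arrive at $\zeta(d)\geq\psi(d)/d$ and anchor at $t^*+1$; both are valid under the hypothesis $d>t^*+2$.
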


\begin{proof}[Proof.]
We fix $d > t^*+2$ and start by rewriting $\zeta$ according to
\begin{align}
\zeta(d)
=& \, \psi(d)-\psi(d-1) \label{eq:pppoetttrref}\\
=& \,  d \, \left(\frac{\psi(d)}{d} - \frac{\psi(d-1)}{d-1}\left(1-\frac{1}{d}\right)\right) \nonumber\\
=& \, d \, \left(\frac{\psi(d)}{d} - \frac{\psi(d-1)}{d-1}\right) + \frac{\psi(d-1)}{d-1}.\label{eq:pppoetttrref2}
\end{align}
As the function 
\begin{equation*}
t \in (t^*, \infty) \mapsto \frac{\psi(t)}{t} \in (0, \infty)
\end{equation*}
is non-decreasing by definition, we have  
\begin{equation*}
\frac{\psi(d)}{d} - \frac{\psi(d-1)}{d-1} 
\geq 0
\quad \text{and} \quad 
\frac{\psi(d-1)}{d-1} 
\geq \frac{\psi\left(\left\lfloor t^*+1\right\rfloor\right)}{\left\lfloor t^*+1\right\rfloor} .
\end{equation*}
Putting everything together, we obtain
\begin{align*}
\zeta(d)
= d \, \left(\frac{\psi(d)}{d} - \frac{\psi(d-1)}{d-1}\right) + \frac{\psi(d-1)}{d-1}
\geq \frac{\psi\left(\left\lfloor t^*+1\right\rfloor\right)}{\left\lfloor t^*+1\right\rfloor},
\end{align*}
which establishes the desired result upon setting
\begin{equation*}
\kappa \coloneqq \frac{\psi\left(\left\lfloor t^*+1\right\rfloor\right)}{\left\lfloor t^*+1\right\rfloor}.
\end{equation*}
\end{proof}

\begin{lemma}[Subadditivity of $\psi^{(-1)}$]\label{lem:subadditivitypsi}
The inverse $\psi^{(-1)}$ of the decay rate function $\psi$ is subadditive, i.e., for all $a, b > 0$, we have
\begin{equation*}
\psi^{(-1)}(a+b)
\leq \psi^{(-1)}(a) + \psi^{(-1)}(b).
\end{equation*}
\end{lemma}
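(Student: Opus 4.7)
The plan is to reduce the desired subadditivity of $\psi^{(-1)}$ to a superadditivity statement for $\psi$ itself. By Lemma~\ref{lem:invertdecaratefct}, $\psi$ is strictly increasing on $(t^*,\infty)$ and maps onto $(0,\infty)$, so $\psi^{(-1)}\colon(0,\infty)\to(t^*,\infty)$ is well-defined and strictly increasing. Consequently, the target inequality will follow immediately once I establish the auxiliary claim
\begin{equation*}
\psi(s+t)\;\geq\;\psi(s)+\psi(t)\qquad \text{for all } s,t>t^*.
\end{equation*}
Indeed, I would then set $s\coloneqq \psi^{(-1)}(a)$, $t\coloneqq \psi^{(-1)}(b)$, observe that both lie in $(t^*,\infty)$ and therefore so does $s+t$, invoke the auxiliary claim to get $\psi(s+t)\geq a+b$, and finally apply $\psi^{(-1)}$, whose monotonicity preserves the inequality, to conclude $\psi^{(-1)}(a+b)\leq s+t = \psi^{(-1)}(a)+\psi^{(-1)}(b)$.

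The key step is therefore the auxiliary superadditivity claim, which I would extract directly from the defining property that the ratio $t\mapsto \psi(t)/t$ is non-decreasing on $(t^*,\infty)$. Given $s,t>t^*$, I would assume without loss of generality that $\psi(s)/s \geq \psi(t)/t$. Since $s+t\geq s>t^*$, the ratio monotonicity yields $\psi(s+t)/(s+t)\geq \psi(s)/s$, so
\begin{equation*}
\psi(s+t)\;\geq\;(s+t)\,\frac{\psi(s)}{s}\;=\;\psi(s)+t\,\frac{\psi(s)}{s}\;\geq\;\psi(s)+t\,\frac{\psi(t)}{t}\;=\;\psi(s)+\psi(t),
\end{equation*}
where the second inequality is precisely the WLOG assumption.

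The only point requiring some care, and the main (minor) obstacle, is to make sure every argument stays in the domain $(t^*,\infty)$ where $\psi$ is known to be strictly increasing: this needs $s,t>t^*$ and $s+t>t^*$, the latter being automatic since $t^*\geq 0$. Everything else is a one-line manipulation of the ratio monotonicity hypothesis, so I expect the proof to be very short once the superadditivity lemma is isolated.
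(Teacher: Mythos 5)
Your proof is correct, and it takes a genuinely different route from the paper's. The paper transfers the ratio-monotonicity hypothesis on $\psi$ to the inverse function: it shows, by implicit differentiation (change of variables $t = \psi^{(-1)}(u)$ and the smoothness of $\psi$), that $u \mapsto \psi^{(-1)}(u)/u$ is non-increasing on $(0,\infty)$, and then obtains subadditivity by splitting $\psi^{(-1)}(a+b) = a\cdot\frac{\psi^{(-1)}(a+b)}{a+b} + b\cdot\frac{\psi^{(-1)}(a+b)}{a+b}$ and bounding each piece. You instead keep everything on the $\psi$ side: you prove superadditivity $\psi(s+t)\geq\psi(s)+\psi(t)$ directly from the hypothesis that $t\mapsto\psi(t)/t$ is non-decreasing, and then obtain subadditivity of $\psi^{(-1)}$ by a single application of the increasing inverse, with the domain check $s,t,s+t>t^*$ handled as you note. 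Your route is more elementary — it needs only the monotonicity of the ratio as stated in Definition~\ref{def:expdecellips} and avoids the differentiation of $\psi^{(-1)}$ entirely — whereas the paper's route produces the stronger intermediate fact that $\psi^{(-1)}(u)/u$ is non-increasing, which is of independent interest but is not reused elsewhere in the argument. A minor stylistic point: your WLOG can be dropped by mimicking the paper's symmetric split on the $\psi$ side, writing $\psi(s+t) = s\,\frac{\psi(s+t)}{s+t} + t\,\frac{\psi(s+t)}{s+t} \geq s\,\frac{\psi(s)}{s} + t\,\frac{\psi(t)}{t}$, which gives superadditivity in one line without choosing which ratio is larger.
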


\begin{proof}[Proof.]
We proceed in two steps, the first of which analyzes the variations of the function 
\begin{equation*}
u \mapsto \frac{\psi^{(-1)}(u)}{u}.
\end{equation*}
To this end, we compute
\begin{align}
\left[\frac{\psi^{(-1)}(u)}{u}\right]'
=& \, \left[\frac{\left[\psi'\left(\psi^{(-1)}(u)\right)\right]^{-1}}{u}- \frac{\psi^{(-1)}(u)}{u^2}\right] \label{eq:jhghguhjklkll}\\
=& \, \frac{\left[\psi^{(-1)}(u)\right]^2}{u^2\, \psi'\left(\psi^{(-1)}(u)\right)}
\left[\frac{u}{\left[\psi^{(-1)}(u)\right]^2} - \frac{\psi'\left(\psi^{(-1)}(u)\right)}{\psi^{(-1)}(u)} \right].\label{eq:jhghguhjklkll2}
\end{align}
Changing variables according to
\begin{equation*}
t
= \psi^{(-1)}(u),
\end{equation*}
(\ref{eq:jhghguhjklkll})--(\ref{eq:jhghguhjklkll2}) can be rewritten as
\begin{align}
\left[\frac{\psi^{(-1)}(u)}{u}\right]' 
=& \, \frac{t^2}{\left[\psi(t)\right]^2\, \psi'\left(t\right)}
\left[\frac{\psi(t)}{t^2} -\frac{\psi'\left(t\right)}{t} \right] \label{eq:jhghguhjklkllb1}\\
=& \, - \frac{t^2}{\left[\psi(t)\right]^2\, \psi'\left(t\right)}
\left[\frac{\psi(t)}{t}\right]'
\leq 0, \label{eq:jhghguhjklkllb2}
\end{align}
where the inequality holds for all $t \in (t^* , \infty)$ as a direct consequence of $\psi$ being strictly increasing (cf. Lemma \ref{lem:invertdecaratefct}) and satisfying the properties (\ref{eq:assumptionspsidecayfct}) in Definition \ref{def:expdecellips}.
It hence follows from (\ref{eq:jhghguhjklkllb1})--(\ref{eq:jhghguhjklkllb2}) that 
\begin{equation}\label{eq:subadddresfrststp}
u \mapsto \frac{\psi^{(-1)}(u)}{u}
\end{equation}
is non-increasing on $(0, \infty)$.

For the second step, fix $a, b > 0$ and observe that
\begin{equation}\label{eq:lkozkpseppezjzeh}
a\, \frac{\psi^{(-1)}(a+b)}{a+b}
\leq \psi^{(-1)}(a)
\quad \text{and} \quad 
b\, \frac{\psi^{(-1)}(a+b)}{a+b}
\leq \psi^{(-1)}(b).
\end{equation}
The desired result now follows by application of these two inequalities according to
\begin{equation*}
\psi^{(-1)}(a+b)
= a\, \frac{\psi^{(-1)}(a+b)}{a+b} + b\, \frac{\psi^{(-1)}(a+b)}{a+b}
\stackrel{(\ref{eq:lkozkpseppezjzeh})}{\leq} \psi^{(-1)}(a) + \psi^{(-1)}(b).
\end{equation*}
\end{proof}

\section{Finite-dimensional ellipsoids}\label{sec:finite-dim-elli}

    When only finitely many semi-axes are non-zero in Definition~\ref{def: Infinite dimensional ellipsoid}, the monotone ordering implies 
    that these are associated with the initial indices. 
    In this case we say that the 
    ellipsoid is finite-dimensional and write $\mathcal{E}_p^d$, where $d$ denotes 
    the number of non-zero semi-axes.
    The proof of the main result Theorem~\ref{thm: scaling metric entropy infinite ellipsoids exp} reduces the problem to that of covering a finite-dimensional ellipsoid of dimension $d_\varepsilon$ satisfying \eqref{eq:ppppaaatutyyrieie2}.
    The present appendix provides the necessary tools to estimate the metric entropy of finite-dimensional ellipsoids.

Our approach is based on volume arguments.
Specifically, we shall exploit the fact that the total volume occupied by the covering balls of a given set is larger than or equal to the volume of the set.
Conversely, when packing balls into a set, the total volume occupied by these balls must be less than or equal to the volume of a slightly augmented version of the set.
This intuition is formalized in \cite[Lemma 5.7]{wainwrightHighDimensionalStatistics2019} for $x \in \mathbb{R}^{d}$ (the case $x \in \mathbb{C}^{d}$ can be handled similarly by identifying $\mathbb{C}$ with $\mathbb{R}^2$), which we restate here for completeness.

\begin{lemma}\label{lem: volume estimates}
Let $d \in \mathbb{N}^*$ and fix $\varepsilon>0$.
Consider the norms $\|\cdot\|$ and $\|\cdot\|'$ on $\mathbb{K}^d$
and let $\mathcal{B}$ and $\mathcal{B}'$ be their respective unit balls.
Then, the $\varepsilon$-covering number $N (\varepsilon ; \mathcal{B}, \|\cdot\|' ) $ satisfies
\begin{equation}\label{eq: volume estimates}
 \frac{\text{\normalfont{vol}}_{\mathbb{K}}(\mathcal{B})}{\text{\normalfont{vol}}_{\mathbb{K}}(\mathcal{B}')}
\leq N \left(\varepsilon ; \mathcal{B}, \|\cdot\|' \right) \, \varepsilon^{\sigma_{\mathbb{K}}(d)}
\leq 2^{\sigma_{\mathbb{K}}(d)} \frac{\text{\normalfont{vol}}_{\mathbb{K}}\left(\mathcal{B} + \frac{\varepsilon}{2} \mathcal{B}' \right)}{\text{\normalfont{vol}}_{\mathbb{K}}(\mathcal{B}')}.
\end{equation}
\end{lemma}

\noindent
Volume ratios akin to those in (\ref{eq: volume estimates}) will appear regularly in our analyses, in particular with $\|\cdot\|$ and $\|\cdot\|'$ given by $p-$ and $q-$norms, for some $p,q \in [1, \infty]$.
It is therefore convenient to introduce the notation 
\begin{equation}\label{eq:defvolratio}
V_{p,q}^{{\mathbb{K}}, d}
\coloneqq  \frac{\text{vol}_{\mathbb{K}}(\mathcal{B}_p)}{\text{vol}_{\mathbb{K}}(\mathcal{B}_q)}.
\end{equation}
Such volume ratios have been studied extensively in the literature; see, for example, \cite{kempkaVolumesUnitBalls2017,kossaczka2020entropy} and \cite[Chapter 3]{edmundsFunctionSpacesEntropy1996} for representative references in the context of metric entropy.

Now, observe that, by taking logarithms in (\ref{eq: volume estimates}), one can readily deduce the metric entropy scaling behavior of a $d$-dimensional unit ball $\mathcal{B}$ according to
\begin{equation}\label{eq:aaakjhvaaa}
H \left(\varepsilon ; \mathcal{B}, \|\cdot\|' \right) 
\sim_{\varepsilon \to 0} {\sigma_{\mathbb{K}}(d)} \log \left(\varepsilon^{-1}\right).
\end{equation}
Intuitively, this scaling quantifies that, in order to encode an element of $\mathcal{B}$ with precision $\varepsilon$, one needs to quantize each of its components using  $\log (\varepsilon^{-1})$ bits.

Recalling that finite-dimensional $p$-ellipsoids are unit balls with respect to $\|\cdot\|_{p, \mu}$-norms, it follows that their metric entropy behavior is, in principle, characterized by (\ref{eq:aaakjhvaaa}).
However, bringing out the dependency on the semi-axes $\{\mu_1, \dots, \mu_d\}$ and getting good constants in lower and upper bounds on metric entropy, ideally even sharp results, requires significantly more work. 
The following two theorems, whose proofs have been relegated to Appendix~\ref{sec:lakaplzksazaza}, address these issues and constitute the main results in this context.

\begin{theorem}\label{thm: metric entropy of finite ellipsoidsLB}
Let $d\in \mathbb{N}^*$ and $p,q\in [1, \infty]$.
Then, there exists a positive real constant $\kminus$ independent of $d$ such that, for all $\varepsilon>0$, the covering number of the ellipsoid $\mathcal{E}^d_p$ in $\|\cdot\|_q$-norm satisfies
\begin{equation*}\label{eq: metric entropy of finite ellipsoidsLB}
N \left(\varepsilon ; \mathcal{E}_p^d, \|\cdot\|_q \right)^{\frac{1}{{\sigma_{\mathbb{K}}(d)}}} \, \varepsilon
\geq \kminus \,  {\sigma_{\mathbb{K}}(d)}^{(\frac{1}{q}-\frac{1}{p})} \, \bar \mu_d,
\end{equation*}
where $\bar \mu_d$ is the geometric mean of the semi-axes of $\mathcal{E}^d_p$.
\end{theorem}

\noindent 
In the real case, 
Theorem~\ref{thm: metric entropy of finite ellipsoidsLB} is the metric-entropy analogue of 
\cite[Lemma~5]{FISCHER2020105343} for entropy numbers of diagonal operators.

\begin{theorem}\label{thm: metric entropy of finite ellipsoids}
Let $d\in \mathbb{N}^*$, let $p,q\in [1, \infty]$,  and assume that 
\begin{equation}\label{eq:thisisassuptionond}
H \left(\varepsilon ; \mathcal{E}^d_p, \|\cdot\|_q \right) \geq 2 \, {\sigma_{\mathbb{K}}(d)},
\quad \text{for all } \varepsilon\in [0, 2 \mu_d],
\end{equation}
where $\mu_d$ is the smallest semi-axis of $\mathcal{E}^d_p$.
Then, there exists a positive real constant $\kplus$ independent of $d$ such that, for all $\varepsilon\in [0 , 2 \mu_d]$, the covering number of the ellipsoid $\mathcal{E}^d_p$ in $\|\cdot\|_q$-norm satisfies
\begin{equation*}\label{eq: metric entropy of finite ellipsoids}
N \left(\varepsilon ; \mathcal{E}_p^d, \|\cdot\|_q \right)^{\frac{1}{{\sigma_{\mathbb{K}}(d)}}} \, \varepsilon
\leq  \kplus \,  {\sigma_{\mathbb{K}}(d)}^{(\frac{1}{q}-\frac{1}{p})} \, \bar \mu_d,
\end{equation*}
where $\bar \mu_d$ is the geometric mean of the semi-axes of $\mathcal{E}^d_p$.
\end{theorem}

\noindent

\noindent
Obviously, we must have
\begin{equation}\label{eq:obviousbutneeded}
\kplus \geq \kminus.
\end{equation}
Note that the upper bound in Theorem~\ref{thm: metric entropy of finite ellipsoids} requires the additional assumption $\varepsilon\in [0, 2 \mu_d]$.
This can be explained as follows.
If  $\varepsilon$ is large compared to $\mu_d$ (recall that the semi-axes are ordered), the problem of covering the $d$-dimensional ellipsoid can be reduced to that of covering a lower-dimensional version thereof.
This insight is illustrated, for $d=2$, in Figure \ref{fig: proof outline}.

\begin{center}
\begin{tikzpicture}[scale=1]
\draw[help lines, color=gray!30, dashed] (-7.9,-2.5) grid (-.6,2.5);

\draw[->, color=black!50] (-7,0)--(-1,0);
\draw[->, color=black!50] (-4,-2)--(-4,2);

\node[ellipse,
    draw,
    fill = gray!10,
    fill opacity=.1,
    minimum width = 5.33cm, 
    minimum height = .65cm] (e) at (-4,0) {};

\filldraw[color=black!60!green, fill=green!10, fill opacity=.5, draw opacity = .5, dash pattern=on \pgflinewidth off 1pt]
(-6.3,0) circle (.75) node[color=black!60!green, scale=.3, draw opacity=1] {x};

\node[scale=.4, color=black!60!green] (a1) at (-6.3,-.1) {$x_1$};

\filldraw[color=black!60!green, fill=green!10, fill opacity=.5, draw opacity = .5, dash pattern=on \pgflinewidth off 1pt]
(-1.8,0) circle (.75) node[color=black!60!green, scale=.3, draw opacity=1] {x};

\node[scale=.4, color=black!60!green] (b1) at (-1.8,-.1) {$x_5$};

\draw [stealth-stealth, color=black!60!green] (-1.8,0) -- node [right,midway, scale=.4] {$\varepsilon$}  (-1.8,0.75) ;

\filldraw[color=black!60!green, fill=green!10, fill opacity=.5, draw opacity = .5, dash pattern=on \pgflinewidth off 1pt]
(-2.925,0) circle (.75) node[color=black!60!green, scale=.3, draw opacity=1] {x};

\node[scale=.4, color=black!60!green] (e1) at (-2.925,-.1) {$x_4$};

\filldraw[color=black!60!green, fill=green!10, fill opacity=.5, draw opacity = .5, dash pattern=on \pgflinewidth off 1pt]
(-4.05,0) circle (.75) node[color=black!60!green, scale=.3, draw opacity=1] {x};

\node[scale=.4, color=black!60!green] (f1) at (-4.05,-.1) {$x_3$};

\filldraw[color=black!60!green, fill=green!10, fill opacity=.5, draw opacity = .5, dash pattern=on \pgflinewidth off 1pt]
(-5.175,0) circle (.75) node[color=black!60!green, scale=.3, draw opacity=1] {x};

\node[scale=.4, color=black!60!green] (g1) at (-5.175,-.1) {$x_2$};

\node[scale=.75] (g) at (-3, 1.5) {$\mathcal{E}^2_2$};

\draw [stealth-stealth, color=blue] (-4,-1) -- node [below,midway] {$\mu_1$}  (-6.68,-1) ;
\draw [-, dashed, color=blue!40] (-6.68,-1) -- (-6.68,0) ;

\draw [stealth-stealth, color=red] (-6.9,0) -- node [left,midway] {$\mu_2$}  (-6.9,.335) ;
\draw [-, dashed, color=red!40] (-6.9,.335) -- (-4,.335) ;

\end{tikzpicture}

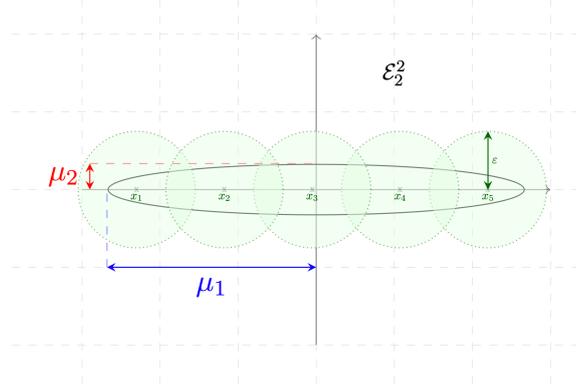
\captionof{figure}{\small Reduction of a two-dimensional covering problem to one dimension by lining up $\varepsilon$-balls, with $\varepsilon>2 \mu_2$, along the
$x$-axis.} 
\label{fig: proof outline}
\end{center}

When ellipsoids arise as images of unit balls under diagonal operators, their 
semi-axes correspond to the singular values of the underlying operator. It is 
therefore natural to compare our setting with what is available in that context 
(see, for example, 
\cite{kossaczka2020entropy,carl1981entropy,schuttEntropyNumbersDiagonal1984,
gordon1987geometric,FISCHER2020105343}). Existing bounds for diagonal operators, 
notably those in \cite{FISCHER2020105343}, yield expressions of the form 
$\prod_{n=1}^{d}\bigl(\|\mathrm{id}^{d}_{p,q}\| + \kappa_{q}\mu_{n}/\varepsilon\bigr)$, 
which differ from the multiplicative form 
$\prod_{n=1}^{d}(\mu_{n}/\varepsilon)$ that arises for ellipsoids after 
$\varepsilon$-dependent truncation. Because of this structural difference, the 
available diagonal-operator bounds do not provide the matching lower and 
upper bounds needed here. Theorems~\ref{thm: metric entropy of finite ellipsoidsLB} 
and~\ref{thm: metric entropy of finite ellipsoids} deliver these tight 
finite-dimensional bounds (up to the additional assumption~\eqref{eq:thisisassuptionond}), 
and, when combined with the correct $\varepsilon$-dependent truncation, yield the sharp asymptotics for 
infinite-dimensional ellipsoids developed below.

\section{Proofs}

\subsection{Proofs of Theorems \ref{thm: metric entropy of finite ellipsoidsLB} and \ref{thm: metric entropy of finite ellipsoids}}\label{sec:lakaplzksazaza}

We start with preparatory material needed in both proofs. First, observe that $\mathcal{E}^d_p$ is the image of $\mathcal{B}_p$ under the diagonal matrix
\begin{equation}\label{eq:diagmatrixeigenvalaxis}
A_\mu = 
\begin{pmatrix}
\mu_1 & 0 & \dots & 0 \\
0 & \mu_2 & \dots & 0 \\
\vdots & \vdots & \ddots & \vdots \\
0 & 0 & \dots & \mu_d 
\end{pmatrix}.
\end{equation}
Indeed, we have 
\begin{align*}
\mathcal{E}^d_p 
&= \left\{ x \in \mathbb{K}^d \mid \|x\|_{p, \mu} \leq 1 \right\} \\
&= \left\{ x \in \mathbb{K}^d \mid \|z\|_{p}\leq 1, \text{ such that  } x_n = \mu_n z_n, \text{ for all } n \in \{1, \dots, d\} \right\} \\
&= \left\{ A_\mu z \mid  z \in \mathbb{K}^d \text{ and } \|z\|_{p} \leq 1 \right\} 
= A_\mu \mathcal{B}_p.
\end{align*}
Next, note that
\begin{equation}\label{eq:lkhugyfdsssskbbjb}
\text{det}(A_\mu) 
=  \bar \mu_d^{\sigma_{\mathbb{K}}(d)},
\end{equation}
where we identified $\mathbb{C}$ with $\mathbb{R}^2$.
It now follows directly from $\mathcal{E}^d_p = A_\mu \mathcal{B}_p$ that the volumes of the ellipsoid $\mathcal{E}^d_p $ and the ball $\mathcal{B}_p$ are related according to 
\begin{equation}\label{eq: volume ellipsoid}
\text{vol}_{\mathbb{K}}(\mathcal{E}^d_p) 
=\text{vol}_{\mathbb{K}}(\mathcal{B}_p) \, \text{det}(A_\mu) 
\stackrel{(\ref{eq:lkhugyfdsssskbbjb})}{=}  \text{vol}_{\mathbb{K}}(\mathcal{B}_p) \, \bar \mu_d^{\sigma_{\mathbb{K}}(d)}.
\end{equation}
Asymptotic expressions for the volume of the ball $\mathcal{B}_p$ as the dimension 
tends to infinity are well known; see, for example, \cite[Eq.~(10)]{kossaczka2020entropy} 
for the real case. We conclude this preparatory discussion with a corresponding 
result on the asymptotics of the ratio of such volumes (recall 
\eqref{eq:defvolratio}).


\begin{lemma}\label{lem: Asymptotic scaling of volume ratio}
Let $d\in\mathbb{N}^*$ and $p, q \in [1,\infty]$.
The volume ratio $V_{p,q}^{{\mathbb{K}}, d}$ satisfies
\begin{equation*}\label{eq: Asymptotic scaling of volume ratio}
\log\left\{ \left(V_{p,q}^{{\mathbb{K}}, d}\right)^{\frac{1}{{\sigma_{\mathbb{K}}(d)}}} \right\}
= \left(\frac{1}{q}-\frac{1}{p}\right) \log\left(\sigma_{\mathbb{K}}(d)\right)+O_{d\to\infty}\left(1\right).
\end{equation*}
\end{lemma}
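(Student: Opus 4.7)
My plan is to compute the volume of $\mathcal{B}_p$ explicitly (in both the real and complex cases) by a Dirichlet-type integral, form the ratio $V_{p,q}^{\mathbb{K},d}$, and then apply Stirling's formula to extract the claimed asymptotic behavior.

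Concretely, in the real case I would start from the well-known formula
\begin{equation*}
\text{vol}_{\mathbb{R}}(\mathcal{B}_p) \;=\; \frac{\bigl(2\,\Gamma(1+1/p)\bigr)^{d}}{\Gamma(1+d/p)},
\end{equation*}
obtained by applying the Dirichlet integral to $\int_{\sum |x_n|^p \leq 1} dx$. In the complex case I would identify $\mathbb{C}^d$ with $\mathbb{R}^{2d}$ and pass to polar coordinates $x_n = r_n e^{i\theta_n}$, yielding a factor $(2\pi)^d$ times $\int_{\sum r_n^p \leq 1,\, r_n \geq 0} \prod r_n\, dr_n$, which after the substitution $u_n = r_n^p$ reduces to another Dirichlet integral and gives
\begin{equation*}
\text{vol}_{\mathbb{C}}(\mathcal{B}_p) \;=\; \frac{\pi^{d}\,\Gamma(1+2/p)^{d}}{\Gamma(1+2d/p)}.
\end{equation*}
Both formulas admit the unified form $\text{vol}_{\mathbb{K}}(\mathcal{B}_p) = C_p^{\sigma_{\mathbb{K}}(d)/\alpha_{\mathbb{K}}}/\Gamma(1+\sigma_{\mathbb{K}}(d)/p)$, with $C_p$ a constant depending only on $p$ and $\mathbb{K}$. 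Taking the ratio, the $C_p$-style constants cancel out exponentially, leaving
\begin{equation*}
V_{p,q}^{\mathbb{K},d} \;=\; \left(\frac{C_p}{C_q}\right)^{\!\sigma_{\mathbb{K}}(d)/\alpha_{\mathbb{K}}} \cdot \frac{\Gamma(1+\sigma_{\mathbb{K}}(d)/q)}{\Gamma(1+\sigma_{\mathbb{K}}(d)/p)}.
\end{equation*}

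The next step is to take the logarithm, divide by $\sigma_{\mathbb{K}}(d)$, and apply Stirling's formula in the form $\log \Gamma(1+x) = x \log x - x/\ln 2 + \tfrac{1}{2}\log(2\pi x) + O(1/x)$ to both $\Gamma$-factors. The $x\log x$ pieces combine into
\begin{equation*}
\frac{1}{\sigma_{\mathbb{K}}(d)}\!\left[\frac{\sigma_{\mathbb{K}}(d)}{q}\log\frac{\sigma_{\mathbb{K}}(d)}{q} - \frac{\sigma_{\mathbb{K}}(d)}{p}\log\frac{\sigma_{\mathbb{K}}(d)}{p}\right] = \left(\frac{1}{q}-\frac{1}{p}\right)\log\sigma_{\mathbb{K}}(d) + O(1),
\end{equation*}
while the linear-in-$x$ terms, the $C_p/C_q$ contribution, and the $\tfrac{1}{2}\log(2\pi x) + O(1/x)$ correction all collapse to $O_{d\to\infty}(1)$ after division by $\sigma_{\mathbb{K}}(d)$. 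This gives the claimed expression.

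The calculations are essentially routine; the only delicate point is handling the two cases $\mathbb{K}=\mathbb{R}$ and $\mathbb{K}=\mathbb{C}$ in a uniform way so that both answers come out in terms of $\sigma_{\mathbb{K}}(d)$ rather than $d$, and making sure the constants that depend on $p$ and $q$ but not on $d$ are correctly absorbed into the $O(1)$ remainder. I do not expect any substantive obstacle beyond bookkeeping.
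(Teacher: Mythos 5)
Your proposal is correct and takes essentially the same route as the paper: both express $\text{vol}_{\mathbb{K}}(\mathcal{B}_p)$ via the Gamma-function formula, form the ratio, and apply Stirling's formula to the $\Gamma(1+\sigma_{\mathbb{K}}(d)/p)$ factors to extract the $(1/q-1/p)\log\sigma_{\mathbb{K}}(d)$ term, with all $d$-independent and $O(1/d)$ contributions absorbed into $O_{d\to\infty}(1)$. The only cosmetic difference is that you re-derive the volume formulas from Dirichlet integrals and use a unified $\sigma_{\mathbb{K}}(d)$ notation, whereas the paper cites the formulas and treats $\mathbb{K}=\mathbb{R}$ and $\mathbb{K}=\mathbb{C}$ in parallel; mathematically the argument is the same.
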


\begin{proof}[Proof.]
See Appendix~\ref{sec:proofAsymptotic scaling of volume ratio}.
\end{proof}

\subsubsection*{Proof of Theorem~\ref{thm: metric entropy of finite ellipsoidsLB}}

The proof of Theorem~\ref{thm: metric entropy of finite ellipsoidsLB} is effected by applying Lemma \ref{lem: volume estimates} with $\mathcal{B}=\mathcal{E}^d_p$, $\mathcal{B}'=\mathcal{B}_q$, $\|\cdot\| = \|\cdot\|_{p, \mu}$, and $\|\cdot\|' = \|\cdot\|_q$, according to
\begin{equation}\label{eq: volume estimates finite dim ellipsoids}
N \left(\varepsilon ; \mathcal{E}^d_p, \|\cdot\|_q \right) \, \varepsilon^{\sigma_{\mathbb{K}}(d)}
\geq  \frac{\text{vol}_{\mathbb{K}}(\mathcal{E}^d_p)}{\text{vol}_{\mathbb{K}}(\mathcal{B}_q)}
\stackrel{(\ref{eq: volume ellipsoid})}{=} V_{p,q}^{{\mathbb{K}}, d}  \, \bar \mu_d^{\sigma_{\mathbb{K}}(d)},
\end{equation}
and observing that Lemma \ref{lem: Asymptotic scaling of volume ratio} then implies the existence of a constant $\kminus >0$ independent of $d$ such that 
\begin{equation}\label{eq: volume estimates finite dim ellipsoids2}
\left(V_{p,q}^{{\mathbb{K}}, d} \right)^{\frac{1}{{\sigma_{\mathbb{K}}(d)}}}
\geq \kminus \, {\sigma_{\mathbb{K}}(d)}^{\left(\frac{1}{q}-\frac{1}{p}\right)}.
\end{equation}
Using (\ref{eq: volume estimates finite dim ellipsoids2}) in (\ref{eq: volume estimates finite dim ellipsoids}) yields the desired result.
We additionally note that, in the case $p=q$, one can take $\kminus=1$. 
This observation is exploited in \cite[Theorem 5]{thirdpaper}.

\subsubsection*{Proof of Theorem~\ref{thm: metric entropy of finite ellipsoids}}

We split the proof of Theorem~\ref{thm: metric entropy of finite ellipsoids} into the cases $p\geq q$ and $p<q$.

\textit{Case $p\geq q$.}
With a view towards application of Lemma \ref{lem: volume estimates} with $\mathcal{B}=\mathcal{E}_p^d$ and $\mathcal{B}'=\mathcal{B}_q$, we first consider the set 
\begin{equation*}
\mathcal{E}^d_p + \frac{\varepsilon}{2} \mathcal{B}_q
\end{equation*}
and note that owing to $\mathcal{B}_q \subseteq \mathcal{B}_p$ and $\varepsilon \leq 2 \mu_d$, it holds that
\begin{equation}\label{eq:ploipol1}
\mathcal{E}^d_p + \frac{\varepsilon}{2} \mathcal{B}_q  
\subseteq \mathcal{E}^d_p + \frac{\varepsilon}{2} \mathcal{B}_p  
\subseteq \mathcal{E}^d_p + \mu_d \mathcal{B}_p.
\end{equation}
Next, as the semi-axes $\mu_1, \dots, \mu_d$ are non-increasing, we obtain 
\begin{equation}\label{eq:ploipol2}
\mathcal{E}^d_p + \mu_d \mathcal{B}_p  
\subseteq \mathcal{E}^d_p + \mathcal{E}^d_p 
= 2 \mathcal{E}^d_p,
\end{equation}
where the equality is thanks to the convexity of the ellipsoid $\mathcal{E}^d_p$ (recall that $p\geq 1$ by assumption).
Combining (\ref{eq:ploipol1}) and (\ref{eq:ploipol2}) now yields
\begin{equation}\label{eq: bound small balls}
\text{vol}_{\mathbb{K}}\left(\mathcal{E}^d_p + \frac{\varepsilon}{2} \mathcal{B}_q \right)
\leq 2^{\sigma_{\mathbb{K}}(d)} \, \text{vol}_{\mathbb{K}} \left(\mathcal{E}^d_p \right).
\end{equation}
Applying the upper bound from Lemma \ref{lem: volume estimates},  with $\mathcal{B}=\mathcal{E}^d_p$, $\mathcal{B}'=\mathcal{B}_q$, $\|\cdot\| = \|\cdot\|_{p, \mu}$, and $\|\cdot\|' = \|\cdot\|_q$, we get
\begin{align}
N \left(\varepsilon ; \mathcal{E}^d_p, \|\cdot\|_q \right) \, \varepsilon^{\sigma_{\mathbb{K}}(d)}
& \leq 2^{\sigma_{\mathbb{K}}(d)} \frac{\text{vol}_{\mathbb{K}}\left(\mathcal{E}_p^d + \frac{\varepsilon}{2} \mathcal{B}_q \right)}{\text{vol}_{\mathbb{K}}(\mathcal{B}_q)} \nonumber \\
& \stackrel{(\ref{eq: bound small balls})}{\leq} 4^{\sigma_{\mathbb{K}}(d)} \frac{\text{vol}_{\mathbb{K}}(\mathcal{E}^d_p)}{\text{vol}_{\mathbb{K}}(\mathcal{B}_q)} 
\stackrel{(\ref{eq: volume ellipsoid})}{=} 4^{{\sigma_{\mathbb{K}}(d)}} \, V_{p,q}^{{\mathbb{K}}, d} \, \bar \mu_d^{\sigma_{\mathbb{K}}(d)}. \label{eq:catapultebalancoire}
\end{align}
From Lemma \ref{lem: Asymptotic scaling of volume ratio} we know that there exists a constant $C>0$ that is independent of $d$ and such that 
\begin{equation*}
\left(V_{p,q}^{{\mathbb{K}}, d} \right)^{\frac{1}{{\sigma_{\mathbb{K}}(d)}}}
\leq C \, {\sigma_{\mathbb{K}}(d)}^{\left(\frac{1}{q}-\frac{1}{p}\right)},
\end{equation*}
which, when used in (\ref{eq:catapultebalancoire}), leads to
\begin{equation}\label{eq:lkojihuezdjksfpoezss}
N \left(\varepsilon ; \mathcal{E}_p^d, \|\cdot\|_q \right)^{\frac{1}{{\sigma_{\mathbb{K}}(d)}}} \, \varepsilon
\leq  4 \, C \,  {\sigma_{\mathbb{K}}(d)}^{\left(\frac{1}{q}-\frac{1}{p}\right)} \, \bar \mu_d.
\end{equation}
Setting $\kplus\coloneqq 4\, C$ yields the desired result.

\textit{Case $p < q$.}
The proof proceeds through arguments based on entropy numbers. 
We first recall that, for $m\in\mathbb{N}^*$, the $m$-th entropy number $e_m(T)$ of a linear operator $T \colon (\mathbb{K}^d, \| \cdot \|) \to (\mathbb{K}^d, \| \cdot \|')$ is defined according to 
\begin{equation*}
e_m(T)
\coloneqq \inf \left\{ \varepsilon >0 \mid H \left(\varepsilon ; T(\mathcal{B}), \|\cdot\|' \right) \leq m  \right\},
\end{equation*}
where $\mathcal{B}$ denotes the unit ball in $\mathbb{K}^d$ w.r.t. the norm $\| \cdot \|$ (see \cite[Definition 1.3.1]{edmundsFunctionSpacesEntropy1996}).
We are particularly interested in the mapping 
\begin{equation}\label{eq:defoptmupq}
T_{p,q, \mu} \coloneqq \text{id} \colon (\mathbb{K}^d, \| \cdot \|_{p, \mu}) \to (\mathbb{K}^d, \| \cdot \|_q),
\end{equation}
that is, the identity operator between $\mathbb{K}^d$ equipped with the $p$-ellipsoid norm $\| \cdot \|_{p, \mu}$ and $\mathbb{K}^d$ equipped with the $q$-norm $\| \cdot \|_q$.
With this choice, we have
\begin{equation*}
e_m(T_{p,q, \mu})
\coloneqq \inf \left\{ \varepsilon >0 \mid H \left(\varepsilon ; \mathcal{E}^d_p, \|\cdot\|_q \right) \leq m  \right\}.
\end{equation*}
Additionally, we fix $m$ according to 
\begin{equation*}
m
\coloneqq \frac{1}{2}\lfloor H \left(\varepsilon ; \mathcal{E}^d_p, \|\cdot\|_q \right) \rfloor,
\end{equation*}
so that trivially
\begin{equation}\label{eq:verynicerelationsmmmhe}
\varepsilon \leq e_{2 m}\left(T_{p,q, \mu}\right)
\quad \text{and} \quad 
N \left(\varepsilon ; \mathcal{E}^d_p, \|\cdot\|_q \right) \leq 2^{2m+1}.
\end{equation}
Next, note that the operator $T_{p,q, \mu}$ can be factorized according to 
\begin{equation*}
T_{p,q, \mu}
= \injop{p}{q}\circ A_{\mu},
\end{equation*}

\noindent
where $\injop{p}{q}$ refers to the identity operator between $\mathbb{K}^d$ equipped with the $p$-norm $\| \cdot \|_{p}$ and $\mathbb{K}^d$ equipped with the $q$-norm $\| \cdot \|_q$, and $A_{\mu}$ 
was defined in (\ref{eq:diagmatrixeigenvalaxis}). Using the multiplicativity property of entropy numbers (see e.g. \cite[Equation 15.7.2]{lorentzConstructiveApproximationAdvanced1996}), we get
\begin{equation}\label{eq:multiplicatentnb}
e_{2m} \left(\injop{p}{q} \circ A_{\mu}  \right)
\leq e_m \left(\injop{p}{q} \right) \, e_m \left(A_{\mu} \right),
\end{equation}
which allows us to reduce the problem of upper-bounding the entropy number $e_{2m} \left(T_{p,q, \mu} \right)$ to that of upper-bounding the entropy numbers $e_m \left(\injop{p}{q} \right)$ and $e_m \left(A_{\mu} \right)$.
Under the assumption (\ref{eq:thisisassuptionond}) and using the fact that ${\sigma_{\mathbb{K}}(d)}$ is integer, one has $m\geq {\sigma_{\mathbb{K}}(d)}$ and hence the classical result \cite[Theorem 1]{schuttEntropyNumbersDiagonal1984} on the entropy number of diagonal operators applies, which can be reformulated in our setting according to
\begin{equation}\label{eq:thisisaneqlabel11}
e_m \left(\injop{p}{q} \right)
\leq C_1 2^{-\frac{m}{{\sigma_{\mathbb{K}}(d)}}} {\sigma_{\mathbb{K}}(d)}^{\left(\frac{1}{q}-\frac{1}{p}\right)},
\end{equation}
where $C_1$ is a positive numerical constant.
Moreover, by setting $p=q$ in (\ref{eq:lkojihuezdjksfpoezss}), we get
\begin{equation}\label{eq:thisisaneqlabel12}
e_m \left(A_{\mu} \right) 
\leq C_2 2^{-\frac{m}{{\sigma_{\mathbb{K}}(d)}}} \, \bar \mu_d,
\end{equation}

\noindent
where $C_2$ is a positive numerical constant.
Now combining (\ref{eq:thisisaneqlabel11}) and (\ref{eq:thisisaneqlabel12}) in (\ref{eq:multiplicatentnb}) yields
\begin{equation}\label{eq:boundonentnbfinal}
e_{2m} \left(T_{p,q, \mu} \right)
\leq C_0 2^{-\frac{2m}{{\sigma_{\mathbb{K}}(d)}}} {\sigma_{\mathbb{K}}(d)}^{\left(\frac{1}{q}-\frac{1}{p}\right)} \, \bar \mu_d,
\end{equation}
with $C_0$ a positive numerical constant.
Finally, using (\ref{eq:verynicerelationsmmmhe}) in combination with (\ref{eq:boundonentnbfinal}) results in the desired bound according to
\begin{align*}
N \left(\varepsilon ; \mathcal{E}_p^d, \|\cdot\|_q \right)^{\frac{1}{{\sigma_{\mathbb{K}}(d)}}} \, \varepsilon
&\stackrel{(\ref{eq:verynicerelationsmmmhe})}{\leq} 2^{\frac{2m+1}{\sigma_{\mathbb{K}}(d)}} e_{2m} \left(T_{p,q, \mu} \right) \\
& \stackrel{(\ref{eq:boundonentnbfinal})}{\leq}  2^{\frac{1}{{\sigma_{\mathbb{K}}(d)}}} \, C_0 \, {\sigma_{\mathbb{K}}(d)}^{\left(\frac{1}{q}-\frac{1}{p}\right)} \, \bar \mu_d
\leq \kplus \, {\sigma_{\mathbb{K}}(d)}^{\left(\frac{1}{q}-\frac{1}{p}\right)} \, \bar \mu_d,
\end{align*}
where we set $\kplus \coloneqq 2 \, C_0$ and used $2^{\frac{1}{{\sigma_{\mathbb{K}}(d)}}} \leq 2$ in the last inequality.
This concludes the proof.

\subsection{Proof of Theorem~\ref{thm: scaling metric entropy infinite ellipsoids exp}}\label{sec:proof-main}

We first introduce, for all $\varepsilon > 0$, the {effective dimension} of the ellipsoid $\mathcal{E}_p$ with respect to $ \|\cdot\|_q $ as
\begin{equation}\label{eq: definition of underbar n pt1mg5}
d_{\varepsilon} \coloneqq \min \left\{ k \in \mathbb{N}^* \mid   
 N \left(\varepsilon ; \mathcal{E}_p, \|\cdot\|_q \right)^{\frac{1}{{\sigma_{\mathbb{K}}(k)}}} \, \mu_k
\leq  \kplus \,  {\sigma_{\mathbb{K}}(k)}^{(\frac{1}{q}-\frac{1}{p})} \, \bar \mu_k  \right\},
\end{equation}
where $\kplus$ is the constant in the statement of Theorem~\ref{thm: metric entropy of finite ellipsoids}. 
It is formally established in Lemma \ref{lem:firstpropeffdim} (see Appendix~\ref{sec:prooffirstpropeffdim}) that the effective dimension is well-defined for all $\varepsilon>0$ and approaches infinity as $\varepsilon \to 0$.
Taking the logarithm in (\ref{eq: definition of underbar n pt1mg5}) yields
\begin{align}
\ & \sigma_{\mathbb{K}}(d_{\varepsilon} - 1) \Bigg\{\log \left(\kplus\right)  + \left(\frac{1}{q}-\frac{1}{p}\right)\log\left(\sigma_{\mathbb{K}}(d_{\varepsilon} - 1)\right) \label{eq:thisistheboundonlogNv1}\\
& \hspace{3.3cm} +  \frac{1}{d_{\varepsilon} - 1}\sum_{n=1}^{d_{\varepsilon} - 1} \log \left \{ \frac{\mu_{n}}{\mu_{d_{\varepsilon}-1}} \right \}\Bigg\} \nonumber \\
< \ & {H \left(\varepsilon ; \mathcal{E}_p, \|\cdot\|_q  \right)} \nonumber  \\
\leq \ & \sigma_{\mathbb{K}}(d_{\varepsilon}) \Bigg\{\log \left(\kplus\right)   + \left(\frac{1}{q}-\frac{1}{p}\right)\log\left(\sigma_{\mathbb{K}}(d_{\varepsilon})\right) +  \frac{1}{d_{\varepsilon}}\sum_{n=1}^{d_{\varepsilon}} \log \left \{ \frac{\mu_{n}}{\mu_{d_{\varepsilon}}} \right \}\Bigg\}. \label{eq:thisistheboundonlogNv2}
\end{align}
Using 
\begin{equation*}
\delta(d)
= \frac{1}{d}\sum_{n=1}^{d} \left(\psi({d}) -\psi(n)\right)
\stackrel{(\ref{eq: that's some big boy exp})}{=} \frac{1}{d}\sum_{n=1}^{d} \log \left \{ \frac{\mu_{n}}{\mu_{d}} \right \},
\end{equation*}
and identifying the term $\log \left(\kplus\right)$ as $O_{d_\varepsilon\to \infty}\left(1\right)$, we can rewrite the upper bound in (\ref{eq:thisistheboundonlogNv1})--(\ref{eq:thisistheboundonlogNv2}) according to
\begin{equation}\label{eq:oooiunbbbcjjpj1}
 {H \left(\varepsilon ; \mathcal{E}_p, \|\cdot\|_q\right)}
\leq  \sigma_{\mathbb{K}}\left(d_\varepsilon\right)  
\left\{\delta\left(d_\varepsilon\right)  
+ \left(\frac{1}{q}-\frac{1}{p}\right) \log \left( \sigma_{\mathbb{K}}\left(d_\varepsilon\right) \right)
 + O_{d_\varepsilon\to \infty}\left(1\right) \right\}.
\end{equation}
The lower bound in (\ref{eq:thisistheboundonlogNv1})--(\ref{eq:thisistheboundonlogNv2}) can equivalently be expressed as
\begin{align*}
& \ \sigma_{\mathbb{K}}\left(d_\varepsilon-1\right)  
\left\{\delta\left(d_\varepsilon-1\right)  
+ \left(\frac{1}{q}-\frac{1}{p}\right) \log \left(\sigma_{\mathbb{K}}\left(d_\varepsilon-1\right)\right)
 + O_{d_\varepsilon\to \infty}\left(1\right) \right\} \\
 =& \ \sigma_{\mathbb{K}}\left(d_\varepsilon\right)  
\left\{\delta\left(d_\varepsilon\right)  
+ \left(\frac{1}{q}-\frac{1}{p}\right) \log \left(\sigma_{\mathbb{K}}\left(d_\varepsilon\right)\right) - \zeta\left(d_\varepsilon\right)\, (1-1/d_\varepsilon)
 + O_{d_\varepsilon\to \infty}\left(1\right) 
 \right\},
\end{align*}
where we used 
\begin{equation*}
\sigma_{\mathbb{K}}\left(d_\varepsilon-1\right) \log \left(\sigma_{\mathbb{K}}\left(d_\varepsilon-1\right)\right) 
= \sigma_{\mathbb{K}}\left(d_\varepsilon\right) \left( \log \left(\sigma_{\mathbb{K}}\left(d_\varepsilon\right)\right) + O_{d_\varepsilon\to \infty}\left(1\right)\right)
\end{equation*} 
together with the identity
\begin{equation*}
\frac{\sigma_{\mathbb{K}}(d )\, \delta(d ) 
- \sigma_{\mathbb{K}}(d -1)\, \delta(d-1)}{\sigma_{\mathbb{K}}(d )}
= \zeta(d)\, (1-1/d), 
\quad \text{for all } d\geq 2,
\end{equation*}
established in Lemma \ref{lem:qqqszracxref} (see Appendix~\ref{sec:proofqqqszracxref}).
With
\begin{equation}\label{eqlkjhgfdfghjghvbn}
1 = O_{d\to\infty}\left(\zeta(d)\right),
\quad \text{and therefore} \quad
d = O_{d\to\infty}\left(d \, \zeta(d)\right),
\end{equation}
which follows from Lemma \ref{lem:zetaisod} (see Appendix~\ref{sec:proofqqqszracxref}), we can further rewrite the lower bound in (\ref{eq:thisistheboundonlogNv1})--(\ref{eq:thisistheboundonlogNv2}) according to
\begin{equation}\label{eq:oooiunbbbcjjpj4}
\sigma_{\mathbb{K}}  \left(d_\varepsilon\right)  
\left\{ \delta\left(d_\varepsilon\right)   
+ \left(\frac{1}{q}-\frac{1}{p}\right) \log \left(\sigma_{\mathbb{K}}\left(d_\varepsilon\right)\right) 
 +  O_{d_\varepsilon\to \infty}\left(\zeta\left(d_\varepsilon\right)\right)  \right\}. 
\end{equation}
Combining the upper bound (\ref{eq:oooiunbbbcjjpj1}) with the lower bound (\ref{eq:oooiunbbbcjjpj4}) and using once more (\ref{eqlkjhgfdfghjghvbn}), we 
obtain the desired final result according to
\begin{align*}
H \left(\varepsilon ; \mathcal{E}_p, \|\cdot\|_q \right) 
= & \, \sigma_{\mathbb{K}}\left(d_\varepsilon\right)  
\left\{\delta\left(d_\varepsilon\right)  
+ \left(\frac{1}{q}-\frac{1}{p}\right) \log \left(\sigma_{\mathbb{K}}\left(d_\varepsilon\right)\right)
 + O_{d_\varepsilon\to \infty}\left(\zeta\left(d_\varepsilon\right)\right) \right\}.
\end{align*}
It remains to establish that $d_\varepsilon$ can, indeed, be expressed as in (\ref{eq:ppppaaatutyyrieie2}).
This will be accomplished with the help of the following two lemmata.

\begin{lemma}\label{lemlkjhgfdaa}
There exists a positive real constant $\cfracminus \leq 1$ independent of $\varepsilon$ such that 
\begin{equation*}
\varepsilon 
\geq \cfracminus \, \mu_{d_{\varepsilon}},
\quad \text{for all } \varepsilon>0,
\end{equation*}
where $d_\varepsilon$ is as defined in (\ref{eq: definition of underbar n pt1mg5}).
\end{lemma}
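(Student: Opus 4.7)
The plan is to exploit the definition of the effective dimension $d_\varepsilon$ together with the lower bound from Theorem \ref{thm: metric entropy of finite ellipsoidsLB}, linked via a coordinate projection. By definition (\ref{eq: definition of underbar n pt1mg5}), taking $k = d_\varepsilon$ gives
\begin{equation*}
N \left(\varepsilon \semcol \mathcal{E}_p, \|\cdot\|_q \right)^{1/\sigma_{\mathbb{K}}(d_\varepsilon)} \, \mu_{d_\varepsilon}
\leq \kplus \, \sigma_{\mathbb{K}}(d_\varepsilon)^{1/q - 1/p} \, \bar\mu_{d_\varepsilon}.
\end{equation*}
The strategy is to produce a matching lower bound on $N(\varepsilon \semcol \mathcal{E}_p, \|\cdot\|_q)^{1/\sigma_{\mathbb{K}}(d_\varepsilon)}$ involving $1/\varepsilon$, so that comparison forces $\varepsilon \gtrsim \mu_{d_\varepsilon}$.

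Such a lower bound follows from the monotonicity $N(\varepsilon \semcol \mathcal{E}_p, \|\cdot\|_q) \geq N(\varepsilon \semcol \mathcal{E}_p^{d_\varepsilon}, \|\cdot\|_q)$, which I would justify by observing that the coordinate projection $\pi$ onto the first $d_\varepsilon$ components maps $\mathcal{E}_p$ exactly onto $\mathcal{E}_p^{d_\varepsilon}$ (extending a valid finite-dimensional point by zeros yields a valid infinite-dimensional one, and conversely truncation preserves the ellipsoid constraint). Since $\pi$ is a contraction in $\|\cdot\|_q$, the image of any $\varepsilon$-covering of $\mathcal{E}_p$ is an $\varepsilon$-covering of $\mathcal{E}_p^{d_\varepsilon}$. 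Theorem \ref{thm: metric entropy of finite ellipsoidsLB} applied to this $d_\varepsilon$-dimensional ellipsoid then yields
\begin{equation*}
N \left(\varepsilon \semcol \mathcal{E}_p^{d_\varepsilon}, \|\cdot\|_q \right)^{1/\sigma_{\mathbb{K}}(d_\varepsilon)} \, \varepsilon
\geq \kminus \, \sigma_{\mathbb{K}}(d_\varepsilon)^{1/q - 1/p} \, \bar\mu_{d_\varepsilon}.
\end{equation*}

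Chaining the two displayed inequalities causes the factors $\sigma_{\mathbb{K}}(d_\varepsilon)^{1/q - 1/p}$ and $\bar\mu_{d_\varepsilon}$ to cancel, leaving $\kminus \, \mu_{d_\varepsilon}/\varepsilon \leq \kplus$, i.e., $\varepsilon \geq (\kminus/\kplus)\, \mu_{d_\varepsilon}$. Setting $\cfracminus \coloneqq \kminus/\kplus$ closes the argument, and the required bound $\cfracminus \leq 1$ is precisely (\ref{eq:obviousbutneeded}). I do not foresee any substantial obstacle; the only mildly delicate point is the projection monotonicity, but this is a standard contractivity argument for $\ell^q$-norms and the equality $\pi(\mathcal{E}_p) = \mathcal{E}_p^{d_\varepsilon}$ is immediate from the product structure of the ellipsoid constraint.
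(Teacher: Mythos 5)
Your proof is correct and follows essentially the same route as the paper: combine the definitional inequality at $k = d_\varepsilon$ with Theorem \ref{thm: metric entropy of finite ellipsoidsLB} applied to the truncated ellipsoid $\mathcal{E}_p^{d_\varepsilon}$ (via $N(\varepsilon \semcol \mathcal{E}_p, \|\cdot\|_q) \geq N(\varepsilon \semcol \mathcal{E}_p^{d_\varepsilon}, \|\cdot\|_q)$), cancel the common factors, and set $\cfracminus = \kminus/\kplus$. The paper simply asserts the monotonicity of the covering number under truncation, whereas you spell out the contractive-projection justification; that is a welcome bit of added rigor but not a different argument.
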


\begin{proof}[Proof.]
See Appendix~\ref{sec:prooflemlkjhgfdaa}.
\end{proof}

\begin{lemma}\label{lemlkjhgfdaa2}
There exist a positive real number $\varepsilon^*$ and a constant $\cfracplus \geq 1$ such that, for all $\varepsilon \in (0, \varepsilon^*)$, 
\begin{equation*}\label{eq: intermediary bound ellipsoids 1 deouf5}
\varepsilon 
\leq \cfracplus \, \mu_{d_{\varepsilon}-1},
\end{equation*}
where $d_\varepsilon$ is as defined in (\ref{eq: definition of underbar n pt1mg5}).
\end{lemma}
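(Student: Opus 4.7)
The plan is to mirror the argument for Lemma \ref{lemlkjhgfdaa} but with the roles of the upper and lower bounds on the covering number reversed: I combine the failure of the defining condition of $d_\varepsilon$ at $k = d_\varepsilon - 1$ (which yields a \emph{lower} bound on $N(\varepsilon\semcol \mathcal{E}_p, \|\cdot\|_q)$) with an \emph{upper} bound obtained by truncation together with Theorem \ref{thm: metric entropy of finite ellipsoids}, and then solve for $\varepsilon$.

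Since $d_\varepsilon \to \infty$ as $\varepsilon \to 0$ (Lemma \ref{lem:firstpropeffdim}), I can choose $\varepsilon^* > 0$ so that $d_\varepsilon \geq 2$ for all $\varepsilon \in (0, \varepsilon^*)$. For such $\varepsilon$, the minimality in (\ref{eq: definition of underbar n pt1mg5}) yields the lower bound
\begin{equation*}
N(\varepsilon\semcol \mathcal{E}_p, \|\cdot\|_q)^{1/\sigma_{\mathbb{K}}(d_\varepsilon - 1)}\, \mu_{d_\varepsilon - 1} \,>\, \kplus\, \sigma_{\mathbb{K}}(d_\varepsilon - 1)^{(1/q - 1/p)}\, \bar\mu_{d_\varepsilon - 1}.
\end{equation*}
On the other hand, the constraint $\|x\|_{p,\mu} \leq 1$ forces $|x_n/\mu_n| \leq 1$ for every $x \in \mathcal{E}_p$, and the exponential decay of $\{\mu_n\}_{n \in \mathbb{N}^*}$ then yields a tail estimate $\sup_{x \in \mathcal{E}_p} \|x - \pi_{d_\varepsilon - 1} x\|_q \leq C_{\text{tail}}\, \mu_{d_\varepsilon}$ for a constant $C_{\text{tail}}$ depending only on $q$ and $\psi$, where $\pi_d$ denotes the projection onto the first $d$ coordinates. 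Lifting any $(\varepsilon - C_{\text{tail}}\mu_{d_\varepsilon})$-covering of $\pi_{d_\varepsilon - 1}(\mathcal{E}_p) \subseteq \mathcal{E}_p^{d_\varepsilon - 1}$ by appending zeros produces an $\varepsilon$-covering of $\mathcal{E}_p$, and applying Theorem \ref{thm: metric entropy of finite ellipsoids} to the finite-dimensional ellipsoid gives the matching upper bound
\begin{equation*}
N(\varepsilon\semcol \mathcal{E}_p, \|\cdot\|_q)^{1/\sigma_{\mathbb{K}}(d_\varepsilon - 1)}\, (\varepsilon - C_{\text{tail}}\mu_{d_\varepsilon}) \,\leq\, \kplus\, \sigma_{\mathbb{K}}(d_\varepsilon - 1)^{(1/q - 1/p)}\, \bar\mu_{d_\varepsilon - 1}.
\end{equation*}

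Dividing the two estimates makes the geometric-mean and constant factors cancel and leaves $\varepsilon - C_{\text{tail}}\, \mu_{d_\varepsilon} < \mu_{d_\varepsilon - 1}$. Combined with $\mu_{d_\varepsilon} \leq \mu_{d_\varepsilon - 1}$ from the monotonicity of the semi-axes, this yields $\varepsilon < (1 + C_{\text{tail}})\, \mu_{d_\varepsilon - 1}$, which is the desired inequality with $\cfracplus = 1 + C_{\text{tail}}$.

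The main technical obstacle is rigorously justifying the use of Theorem \ref{thm: metric entropy of finite ellipsoids} in the upper bound step, since it requires both $\varepsilon - C_{\text{tail}}\mu_{d_\varepsilon} \in [0, 2\mu_{d_\varepsilon - 1}]$ and the assumption (\ref{eq:thisisassuptionond}). Positivity follows from Lemma \ref{lemlkjhgfdaa} once $\varepsilon^*$ is small enough. The upper endpoint condition $\varepsilon - C_{\text{tail}}\mu_{d_\varepsilon} \leq 2\mu_{d_\varepsilon - 1}$ is essentially what we are trying to prove, so if it fails one must instead truncate at a slightly larger dimension $d_\varepsilon - 1 + m$ for some bounded offset $m$ (using the exponential decay to keep the tail under control), in which case the dimension-reduction remark that precedes Figure \ref{fig: proof outline} makes the lower-dimensional covering estimate applicable. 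The verification of (\ref{eq:thisisassuptionond}) is routine in the exponentially decaying regime, and the remaining bookkeeping yields $\cfracplus$ explicitly in terms of $\kplus$, $C_{\text{tail}}$, and the truncation offset.
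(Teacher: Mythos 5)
Your proposal matches the paper's proof in structure and in all the key ingredients: you use the failure of the defining inequality at $k = d_\varepsilon - 1$ as a lower bound on $N(\varepsilon\semcol\mathcal{E}_p,\|\cdot\|_q)$, you truncate $\mathcal{E}_p$ at dimension $d_\varepsilon - 1$ and apply Theorem~\ref{thm: metric entropy of finite ellipsoids} to get a matching upper bound, and you divide the two to cancel the $\bar\mu$, $\kplus$, and $\sigma_{\mathbb{K}}^{(1/q-1/p)}$ factors and extract a bound of the form $\rho \leq \mu_{d_\varepsilon - 1}$ which then converts to $\varepsilon \leq \cfracplus\,\mu_{d_\varepsilon - 1}$. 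The only cosmetic difference is that the paper packages the truncation step as a separate statement (Lemma~\ref{lem:lemboundtriangleineqlike}), which uses a $q$-th-power additive combination $\bar\rho^q = \rho^q + K\mu_{d+1}^q$ obtained by summing coordinate-wise, whereas you use the plain triangle inequality $\|x-\bar x^j\|_q \leq \|\pi_{d_\varepsilon-1}x - x^j\|_q + \|x - \pi_{d_\varepsilon-1}x\|_q$; both forms lead to the same cancellation and both rest on the geometric tail estimate coming from the $\psi$-exponential decay (Lemma~\ref{lem:decayrelsemiaxesratiol} in the paper). You have also correctly flagged the delicate point that Theorem~\ref{thm: metric entropy of finite ellipsoids} requires $\rho \leq 2\mu_{d_\varepsilon - 1}$, which the paper similarly defers and verifies a posteriori from the bound it proves, so there is no substantive gap relative to the published argument.
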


\begin{proof}[Proof.]
See Appendix~\ref{sec:prooflemlkjhgfdaa2}.
\end{proof}

\noindent
Let $\cfracminus$, $\cfracplus$, and $\varepsilon^*>0$ be as in Lemmata \ref{lemlkjhgfdaa} and \ref{lemlkjhgfdaa2}.
Choose $\varepsilon \in (0, \varepsilon^*)$ small enough for
\begin{equation}\label{eqlmkppojdjkd}
\log \left(\varepsilon^{-1}\right)+\log \left(c_0\right) +\log \left(\cfracminus\right) > 0
\quad \text{and} \quad 
d_\varepsilon-1 > t^*
\end{equation}
to hold.
It further follows from Lemmata \ref{lemlkjhgfdaa} and \ref{lemlkjhgfdaa2} that
\begin{equation}\label{eq: lim sup et inf bg50}
\log \left(\mu_{d_{\varepsilon}}\right) + \log(\cfracminus)
\leq \log \left(\varepsilon\right) 
\leq \log \left(\mu_{d_{\varepsilon}-1}\right) + \log(\cfracplus).
\end{equation}
On the other hand we have from (\ref{eq: that's some big boy exp})
\begin{equation}\label{eq: lim sup et inf bg5}
\log \left(\mu_{d}\right)
= -\psi(d) + \log \left(c_0\right), 
\quad \text{for all } d\in \mathbb{N}^*.
\end{equation}
Combining (\ref{eq: lim sup et inf bg50}) and (\ref{eq: lim sup et inf bg5}) yields 
\begin{equation*}
\psi(d_{\varepsilon}-1) - \log(\cfracplus)  
\leq \log \left(\varepsilon^{-1}\right) + \log \left( c_0 \right)
\leq \psi(d_{\varepsilon}) - \log(\cfracminus),
\end{equation*}
or, equivalently, 
\begin{equation}\label{eq: maybe almost the end0}
\begin{cases}
d_{\varepsilon} \geq   \psi^{(-1)}  \left[\log \left(\varepsilon^{-1}\right) + \log \left( c_0 \right) + \log(\cfracminus) \right], \text{ and } 
 \\[2mm]
 d_{\varepsilon} \leq   \psi^{(-1)} \left[\log \left(\varepsilon^{-1}\right) + \log \left(c_0 \right) + \log(\cfracplus)  \right] +1,
\end{cases}
\end{equation}
where we used that the second part of (\ref{eqlmkppojdjkd}) together with Lemma \ref{lem:invertdecaratefct} ensures that $\psi$ can, indeed, be inverted,
and the first part of (\ref{eqlmkppojdjkd}) guarantees that both bounds are well defined.
Using the subadditivity of $\psi^{(-1)}$ (cf. Lemma \ref{lem:subadditivitypsi} in Appendix~\ref{sec:proofqqqszracxref}) and recalling that $\cfracminus \leq 1 \leq \cfracplus$, we get from (\ref{eq: maybe almost the end0})
\begin{equation*}
\begin{cases}
d_{\varepsilon} \geq   \psi^{(-1)}  \left[\log \left(\varepsilon^{-1}\right) + \log \left( c_0 \right)\right] - \psi^{(-1)}  \left[-\log(\cfracminus) \right], \text{ and } 
 \\[2mm]
 d_{\varepsilon} \leq   \psi^{(-1)} \left[\log \left(\varepsilon^{-1}\right)+ \log \left(c_0 \right) \right] + \psi^{(-1)} \left[\log(\cfracplus)  \right] +1,
\end{cases}
\end{equation*}
which, in turn, allows us to finish the proof by concluding that
\begin{equation*}\label{eq: maybe almost the end1}
d_{\varepsilon}
=\psi^{(-1)} \left[\log \left(\varepsilon^{-1}\right)+ \log \left(c_0\right)\right] +O_{\varepsilon \to 0}(1).
\end{equation*}

\subsection{Proof of Corollary~\ref{cor: Complex ellipsoid with exp0}}\label{sec:proofscor67}

First, note that $\psi(t)=ct$ trivially satisfies the defining properties of decay rate functions with the choice $t^*=0$ (cf. Definition \ref{def:expdecellips}).
We can therefore apply Theorem~\ref{thm: scaling metric entropy infinite ellipsoids exp} to get 
\begin{equation}\label{eq:jjjjjerzazkk0}
\small
H \left(\varepsilon ; \mathcal{E}_p, \|\cdot\|_q \right) 
=  \, \sigma_{\mathbb{C}}\left(d_\varepsilon\right)  
\left\{\delta\left(d_\varepsilon\right)  
+ \left(\frac{1}{q}-\frac{1}{p}\right) \log \left(\sigma_{\mathbb{C}}\left(d_\varepsilon\right) \right)
 + O_{d_\varepsilon\to \infty}\left(\zeta\left(d_\varepsilon\right)\right) \right\},
\end{equation}
with $d_{\varepsilon}$ satisfying
\begin{equation} \label{eq:jjjjjerzazkk1}
d_{\varepsilon}
= \psi^{(-1)} \left[\log \left(\varepsilon^{-1}\right) +\log \left(c_0\right)\right] + O_{\varepsilon\to 0}(1) 
= \frac{\log \left(\varepsilon^{-1}\right)}{c} + O_{\varepsilon\to 0}(1).
\end{equation}
With Definition \ref{def:deltapsiexpdec}, we obtain the $\psi$-average function according to
\begin{equation}\label{eq:jjjjjerzazkk3}
    \delta(d)
    = \frac{1}{d}\sum_{n=1}^{d} \left( cd - cn \right)
    = c \, \frac{d-1}{2},
    \quad \text{ for all } d \in \mathbb{N}^*, 
\end{equation}
together with the $\psi$-difference function
\begin{equation}\label{eq:jjjjjerzazkk4}
\zeta (d)
= \psi(d) - \psi(d-1)
= c,
\quad \text{ for all } d \geq 2.
\end{equation}
Finally, observe that  $\sigma_{\mathbb{C}}(d)=\alpha \, d$, for all $d \in \mathbb{N}^*$.
Using (\ref{eq:jjjjjerzazkk1}), (\ref{eq:jjjjjerzazkk3}), and (\ref{eq:jjjjjerzazkk4}) in (\ref{eq:jjjjjerzazkk0}), the desired result follows according to
\begin{align*}
H \left(\varepsilon ; \mathcal{E}_p, \|\cdot\|_q \right) 
= & \, \alpha \, d_\varepsilon 
\left\{\frac{c}{2} d_\varepsilon
+ \left(\frac{1}{q}-\frac{1}{p}\right) \log \left(d_\varepsilon\right)
 + O_{d_\varepsilon\to \infty}\left(1\right) \right\} \\
 = & \, \frac{\alpha\, c}{2} \, d_\varepsilon^2 + \alpha \,  \left(\frac{1}{q}-\frac{1}{p}\right) \, d_\varepsilon \log \left(d_\varepsilon\right) + O_{d_\varepsilon\to \infty}\left(d_\varepsilon\right) \\
 = & \, \frac{\alpha \, \log^2 \left(\varepsilon^{-1}\right)}{2\, c} \\ 
 + &  \, \frac{\alpha}{c} \left(\frac{1}{q}-\frac{1}{p}\right) \log \left(\varepsilon^{-1}\right) \log^{(2)} \left(\varepsilon^{-1}\right) + O_{\varepsilon\to 0}\left(\log \left(\varepsilon^{-1}\right)\right).
\end{align*}

\subsection{Proofs of Corollaries~\ref{cor: Complex ellipsoid with exp} and \ref{cor: Complex ellipsoid with exp2}}\label{sec:proofcor89}

First, note that $\psi(t) = ct(\log (t) -c')$ verifies the properties of decay rate functions with the choice
$t^* \coloneqq 2^{c'}$. We can therefore apply
Theorem~\ref{thm: scaling metric entropy infinite ellipsoids exp} with $\mathbb{K}=\mathbb{C}$ to get
\begin{align}\label{eq:kiuygfiiijeqjkzefe2}
H \left(\varepsilon ; \mathcal{E}_{p}, \|\cdot\|_q \right)
= & \, \alpha \,  d_\varepsilon \left\{\delta\left(d_\varepsilon\right)   + O_{\varepsilon\to 0}\left(\log\left(d_\varepsilon\right)+\zeta\left(d_\varepsilon\right)\right)\right\},
\end{align}
where
\begin{equation}\label{eq:lkjhgfdxccs}
d_{\varepsilon}
=\psi^{(-1)} \left[\log \left(\varepsilon^{-1}\right)+ \log(c_0)\right] + O_{\varepsilon \to 0} (1)
=\psi^{(-1)} \left[\log \left(\varepsilon^{-1}\right)\right] + O_{\varepsilon \to 0} (1).
\end{equation}
Here, we made use of the sublinearity of $\psi^{(-1)}$ to rid ourselves of the $\log(c_0)$ term.

The following two lemmata are needed in the study of the asymptotic behavior of the right-hand-side in (\ref{eq:kiuygfiiijeqjkzefe2}).

\begin{lemma}[Asymptotic behavior of the $\psi$-difference function]\label{lem:jhgfdtttgdjhk}
Let $c,c' > 0$ and consider the function $\psi \colon t \mapsto ct(\log (t) - c')$. 
The $\psi$-difference function $\zeta$ scales according to
\begin{equation*}
\zeta (d)
= O_{d\to \infty}\left(\log(d)\right).
\end{equation*}
\end{lemma}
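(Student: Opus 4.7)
The plan is to compute $\zeta(d)$ explicitly in terms of $\psi(t) = ct(\log(t)-c')$ and then bound the resulting finite difference using the mean value theorem.

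First, I would expand
\begin{equation*}
\zeta(d) = \psi(d)-\psi(d-1) = c\bigl[d\log(d)-(d-1)\log(d-1)\bigr] - c\, c',
\end{equation*}
so the only nontrivial part is to control the discrete derivative of the function $f(t) \coloneqq t\log(t)$ at the integer $d$. Since $f$ is smooth on $(1,\infty)$ with $f'(t) = \log(t) + 1/\ln(2)$, the mean value theorem produces a point $\xi_d \in (d-1,d)$ such that
\begin{equation*}
d\log(d)-(d-1)\log(d-1) = f'(\xi_d) = \log(\xi_d) + \frac{1}{\ln(2)}.
\end{equation*}

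Using $\xi_d \le d$, I conclude
\begin{equation*}
\lvert\zeta(d)\rvert \le c\,\log(d) + \frac{c}{\ln(2)} + c\,c' = O_{d\to\infty}(\log(d)),
\end{equation*}
which is the desired statement. No step here is an obstacle; the only mildly delicate point is ensuring $d-1$ is in the domain where $\psi$ is smooth, which is guaranteed for $d \geq 2$ together with the fact that $f(t)=t\log(t)$ extends smoothly across $t=1$ (and $\zeta$ is only needed asymptotically in $d$, so small values are irrelevant).
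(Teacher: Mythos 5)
Your proof is correct, and it takes a genuinely different (though comparable) route from the paper. The paper manipulates the finite difference algebraically: it writes $d\log(d) - (d-1)\log(d-1) = -d\log(1-1/d) + \log(d-1)$ and expands, arriving at the sharper conclusion $\zeta(d) = c\log(d) + O_{d\to\infty}(1)$. You instead apply the mean value theorem to $f(t) = t\log(t)$ on $[d-1,d]$, which packages the same discrete-derivative estimate into a single clean step. Both are elementary and both establish the lemma's claim; the MVT is arguably tidier, while the paper's expansion incidentally exhibits the leading coefficient $c$ explicitly (you could recover it too by noting $\xi_d \in (d-1,d)$ gives $\log(\xi_d) = \log(d) + O(1/d)$, but the lemma only asks for $O(\log d)$ so you rightly stop earlier). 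One small remark: the inequality you write, $\lvert\zeta(d)\rvert \le c\log(d) + c/\ln(2) + c\,c'$, uses $\log(\xi_d) \le \log(d)$ and the triangle inequality; this is fine for $d\geq 2$ where $\log(\xi_d) \geq 0$, which you already restrict to.
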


\begin{proof}[Proof.]
The result follows directly from
\begin{align*}
\zeta(d)
= \psi(d)- \psi(d-1)
&= c \, d \log(d) - c \,(d-1)\log(d-1) -c \,c' \\
&= c \,d \log(d) - c \,d \log(d-1) + c \,\log(d-1) - c \,c'\\
&= - c \,d \log(1- 1/d) + c \,\log(d-1) -c \,c'\\
&= c \,\log(d) + O_{d\to \infty}(1).
\end{align*}
\end{proof}

\begin{lemma}[Asymptotic behavior of the $\psi$-average function]\label{lem:kljhugaqff}
Let $c,c' > 0$ and consider the function $\psi \colon t \mapsto ct(\log (t) - c')$. 
The $\psi$-average function $\delta$ scales according to
\begin{equation*}
    \delta(d)
    = \frac{cd\log(d)}{2} + \frac{c  \, d \, (\ln^{-1}(2)-2\, c')}{4} + O_{d\to \infty}\left(\log(d)\right).
\end{equation*}
\end{lemma}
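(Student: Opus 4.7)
The plan is to unpack the definition of $\delta$ and evaluate the resulting finite sum asymptotically via the Euler--Maclaurin formula. Specifically, I would first write
\begin{equation*}
\delta(d) = \psi(d) - \frac{1}{d}\sum_{n=1}^{d}\psi(n) = c\,d\log(d) - c\,c'\,d - \frac{c}{d}\sum_{n=1}^{d} n\log(n) + \frac{c\,c'}{d}\sum_{n=1}^{d} n,
\end{equation*}
so that the problem reduces to obtaining a sufficiently sharp asymptotic for $\frac{1}{d}\sum_{n=1}^{d} n\log(n)$; the arithmetic sum $\sum_{n=1}^{d} n = d(d+1)/2$ contributes the exact term $cc'(d+1)/2$.

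Next, I would estimate $\sum_{n=1}^{d} n\log(n)$ by applying the Euler--Maclaurin summation formula to $f(t) = t\log(t)$ on $[1,d]$. Converting $\log$ to $\ln$ via $\log(t) = \ln(t)/\ln(2)$ and computing $\int_1^d t\ln(t)\,dt = \tfrac{d^2}{2}\ln(d) - \tfrac{d^2}{4} + \tfrac{1}{4}$ yields
\begin{equation*}
\int_1^d t\log(t)\,dt = \frac{d^2 \log(d)}{2} - \frac{d^2}{4\ln(2)} + \frac{1}{4\ln(2)}.
\end{equation*}
The Euler--Maclaurin correction contributes a boundary term of order $d\log(d)$ and a remainder controlled by $\int_1^d f'(t)\,dt = O(d\log(d))$; dividing by $d$ produces an $O(\log(d))$ error. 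Hence
\begin{equation*}
\frac{1}{d}\sum_{n=1}^{d} n\log(n) = \frac{d\log(d)}{2} - \frac{d}{4\ln(2)} + O_{d\to\infty}(\log(d)).
\end{equation*}

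Finally, substituting these expressions back into the decomposition of $\delta(d)$ and simplifying gives
\begin{equation*}
\delta(d) = c\,d\log(d) - c\,c'\,d - \frac{c\,d\log(d)}{2} + \frac{c\,d}{4\ln(2)} + \frac{c\,c'\,d}{2} + O_{d\to\infty}(\log(d)),
\end{equation*}
which rearranges to the claimed identity. The main obstacle is purely bookkeeping: keeping the base-$2$ versus natural logarithm conversion straight (the $\ln^{-1}(2)$ arises solely from integrating $t\log(t)$) and making sure the Euler--Maclaurin remainder, the boundary correction $\tfrac{f(1)+f(d)}{2}$, and the constant $\tfrac{1}{4\ln(2)}$ from the lower endpoint all get absorbed correctly into the $O_{d\to\infty}(\log(d))$ remainder.
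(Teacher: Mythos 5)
Your argument is correct and follows the same overall strategy as the paper: expand the definition of $\delta(d)$, reduce the problem to an asymptotic for $\sum_{n=1}^{d} n\log(n)$, and approximate that sum by $\int t\log(t)\,dt$. The only substantive difference is in how the integral approximation is justified: the paper squeezes $\sum_{n=1}^{d} n\log(n)$ between $\int_1^d t\log(t)\,dt$ and $\int_2^{d+1} t\log(t)\,dt$ and observes both endpoints agree up to $O(d\log(d))$, whereas you invoke the Euler--Maclaurin formula and bound the boundary term and the $B_1(\{t\})f'(t)$ remainder. Both routes yield the same $O(d\log(d))$ error for the sum, hence $O(\log(d))$ after dividing by $d$, so the two approaches are technically interchangeable here; the paper's two-sided comparison is slightly more elementary, while Euler--Maclaurin would let you extract further terms in the expansion if ever needed. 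Your bookkeeping (the exact $d(d+1)/2$ contribution, the $\ln(2)$ conversion producing $\ln^{-1}(2)$, and the final recombination into $\tfrac{c d(\ln^{-1}(2)-2c')}{4}$) all checks out.
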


\begin{proof}[Proof.]
See Appendix~\ref{sec:proofkljhugaqff}.
\end{proof}

\noindent
Using Lemma \ref{lem:jhgfdtttgdjhk}, we can now express (\ref{eq:kiuygfiiijeqjkzefe2}) 
according to 
\begin{equation}\label{eq:kiuygfiiijeqjkzefe}
H \left(\varepsilon ; \mathcal{E}_{p}, \|\cdot\|_q \right)
=  \, \alpha \,  d_\varepsilon \left\{\delta\left(d_\varepsilon\right)   + O_{\varepsilon\to 0}\left(\log\left(d_\varepsilon\right)\right)\right\}.
\end{equation}
Furthermore, direct application of Lemma \ref{lem:kljhugaqff} yields
\begin{equation*}
    \delta(d_\varepsilon)
    = \frac{c \, d_\varepsilon\log(d_\varepsilon) }{2} + \frac{c  \, d_\varepsilon \, (\ln^{-1}(2)-2\, c')}{4}
    + O_{\varepsilon\to 0}\left(\log(d_\varepsilon) \right),
\end{equation*}
so that (\ref{eq:kiuygfiiijeqjkzefe}) becomes
\begin{align}
H \left(\varepsilon ; \mathcal{E}_{p}, \|\cdot\|_q \right)
= &   \frac{\alpha  c  d_\varepsilon^2 \log(d_\varepsilon)}{2} \!  
 + \! \frac{\alpha  c d_\varepsilon^2 (\ln^{-1}(2)-2 c')}{4} 
 + O_{\varepsilon\to 0}\left(d_\varepsilon \log\left(d_\varepsilon\right)\right) \label{eq:ppooouuurrr1}\\
=& \, \frac{\alpha \, c}{2} \, d_\varepsilon^2 \left[  \log(d_\varepsilon) + \frac{\ln^{-1}(2)-2\, c'}{2}  \right] \left[1+O_{\varepsilon\to 0}\left(\frac{1}{d_\varepsilon}\right)\right]. \label{eq:ppooouuurrr2}
\end{align}
Inserting (\ref{eq:lkjhgfdxccs}) in (\ref{eq:ppooouuurrr1})--(\ref{eq:ppooouuurrr2}), we get
\begin{align}
H \left(\varepsilon ; \mathcal{E}_{p}, \|\cdot\|_q \right)
=& \,\frac{\alpha \, c}{2} \, \left(\psi^{(-1)}\left[\log \left(\varepsilon^{-1}\right) \right]+O_{\varepsilon\to 0}\left(1\right)\right)^2  \label{eq:lkoijaopjzaozs}  \\
& \, \times  \left[\log\left(\psi^{(-1)}\left[\log \left(\varepsilon^{-1}\right) \right]+O_{\varepsilon\to 0}\left(1\right)\right)
+ \frac{\ln^{-1}(2)-2\, c'}{2} \right] \nonumber\\
& \, \times \left[1 
+ O_{\varepsilon\to 0}\left(\frac{1}{\psi^{(-1)}\left[ \log\left(\varepsilon^{-1}\right) \right]}\right)\right] \nonumber\\
=& \, \frac{\alpha \, c}{2} \, \left(\psi^{(-1)}\left[\log \left(\varepsilon^{-1}\right) \right]\right)^2  \nonumber \\
& \, \times  \left[\log\left(\psi^{(-1)}\left[\log \left(\varepsilon^{-1}\right) \right]\right)
+ \frac{\ln^{-1}(2)-2\, c'}{2} \right] \nonumber\\
& \, \times \left[1 
+ O_{\varepsilon\to 0}\left(\frac{1}{\log\left(\psi^{(-1)}\left[ \log\left(\varepsilon^{-1}\right) \right] \right)}\right)\right]. \label{eq:lkoijaopjzaozs2}
\end{align}
Next, we characterize the inverse of the decay rate function $\psi(t) = ct(\log (t) -c')$.

\begin{lemma}\label{lem:kjhgvhjuhuhygatf}
Let $c,c' > 0$. The inverse of the function ${\psi \colon t \mapsto ct(\log(t) - c')}$ is given by
\begin{equation*}
\psi^{(-1)}\colon u \longmapsto \exp\left\{c'  \ln (2) + W\left( \frac{u \ln (2)}{c}e^{-c'  \ln (2) }\right)\right\},
\end{equation*}
where $W$ denotes the Lambert $W$-function.
\end{lemma}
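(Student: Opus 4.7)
The plan is to solve the equation $u = \psi(t) = ct(\log(t)-c')$ explicitly for $t$ in terms of $u$, reducing it to the defining equation of the Lambert $W$-function. Since $\psi$ is a decay rate function with parameter $t^* = 2^{c'}$, Lemma~\ref{lem:invertdecaratefct} already guarantees that $\psi$ is invertible on $(t^*, \infty)$, so it is enough to exhibit a right inverse on $(0, \infty)$; uniqueness of the inverse then matches this expression with $\psi^{(-1)}$.

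First I would convert the base-$2$ logarithm to the natural one via $\log(t) = \ln(t)/\ln(2)$, turning the equation into
\begin{equation*}
\frac{u\,\ln(2)}{c} \;=\; t\bigl(\ln(t) - c'\ln(2)\bigr).
\end{equation*}
Next I would make the substitution $s \coloneqq \ln(t) - c'\ln(2)$, equivalently $t = 2^{c'} e^{s}$, which turns the right-hand side into $2^{c'} s\, e^{s}$. The equation then reads
\begin{equation*}
s\, e^{s} \;=\; \frac{u\,\ln(2)}{c}\, e^{-c'\ln(2)}.
\end{equation*}
By Definition~\ref{def:Wlambert} of the Lambert $W$-function, this forces $s = W\!\left(\tfrac{u\,\ln(2)}{c}\, e^{-c'\ln(2)}\right)$, and undoing the substitution yields
\begin{equation*}
t \;=\; \exp\!\left\{ c'\ln(2) + W\!\left(\tfrac{u\,\ln(2)}{c}\, e^{-c'\ln(2)}\right)\right\},
\end{equation*}
which is exactly the claimed formula.

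Finally I would check that this candidate lies in the correct range. For $u \in (0,\infty)$ the argument $\tfrac{u\ln(2)}{c}e^{-c'\ln(2)}$ is positive, so $W$ is well-defined and nonnegative by Definition~\ref{def:Wlambert}; hence $t \geq 2^{c'} = t^*$, placing $t$ in the domain of invertibility. There is no real obstacle here: the only subtlety is the substitution that recognises the Lambert-$W$ structure, and the rest is algebra plus a sanity check on the range.
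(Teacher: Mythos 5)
Your proof is correct and follows essentially the same route as the paper: both solve $u = \psi(t)$ by a logarithmic change of variable that exposes the Lambert-$W$ structure, then exponentiate to recover $t$. The only cosmetic difference is that your shifted substitution $s = \ln(t) - c'\ln(2)$ lands directly on the canonical form $s\,e^{s} = \mathrm{const}$, so you invoke Definition~\ref{def:Wlambert} directly, whereas the paper uses the unshifted substitution $x = \ln(t)$ and then appeals to the auxiliary Lemma~\ref{lem:equsolLambert} to solve $x = ae^{-x}+b$.
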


\begin{proof}[Proof.]
See Appendix~\ref{sec:proofkjhgvhjuhuhygatf}.
\end{proof}

\noindent
Using 
Lemma \ref{lem:kjhgvhjuhuhygatf} in (\ref{eq:lkoijaopjzaozs})--(\ref{eq:lkoijaopjzaozs2}), we obtain 
\begin{align*}
H \left(\varepsilon ; \mathcal{E}_{p}, \|\cdot\|_q \right)
=&\, \frac{\alpha \, c \,  2^{2 c'-1}}{\ln (2)} \,\exp\left\{2 W\left( \frac{\ln \left(\varepsilon^{-1}\right)}{2^{c'}c}\right)\right\}  
\left[ W\left( \frac{\ln \left(\varepsilon^{-1}\right)}{2^{c'}c}\right) +  \frac{1}{2}\right] \\
& \times \left[1
+ O_{\varepsilon\to 0}\left(\exp\left\{-W\left( \frac{\ln \left(\varepsilon^{-1}\right)}{2^{c'}c}\right)\right\}\right)\right],
\end{align*}
which concludes the proof of Corollary~\ref{cor: Complex ellipsoid with exp}.

For the proof of Corollary~\ref{cor: Complex ellipsoid with exp2}, we first need a characterization of the asymptotic behavior of $\psi^{(-1)}$.

\begin{lemma}\label{lem:lkjodihuzehchehjz2}
Let $c,c' > 0$. The inverse function of $\psi \colon t \mapsto ct(\log(t) - c')$  satisfies the relation
\begin{equation*}
\psi^{(-1)}(u)^2 \log\left(\psi^{(-1)}(u)\right)
=\frac{u^2}{c^2\log (u)} 
\left[1 +  \frac{\log^{(2)}(u)}{\log(u)} + o_{u\to \infty}\left(\frac{\log^{(2)}(u)}{\log(u)}\right)\right].
\end{equation*}
\end{lemma}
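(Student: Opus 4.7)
The plan is to start from the explicit formula for $\psi^{(-1)}$ furnished by Lemma \ref{lem:kjhgvhjuhuhygatf}, plug in the asymptotic expansion of the Lambert $W$-function from Lemma \ref{lem:lbtseriesexp}, and then carefully carry the expansion up to second order through the operations of exponentiation and multiplication. Concretely, setting $C\coloneqq \ln(2)/(c\, 2^{c'})$, Lemma \ref{lem:kjhgvhjuhuhygatf} gives $\psi^{(-1)}(u) = 2^{c'}\, e^{W(Cu)}$, from which one obtains
\begin{equation*}
\psi^{(-1)}(u)^2 \log\!\left(\psi^{(-1)}(u)\right)
= \frac{2^{2c'}}{\ln (2)}\, e^{2 W(Cu)}\bigl(W(Cu) + c'\ln(2)\bigr).
\end{equation*}
Thus the problem is reduced to an asymptotic analysis of $W(Cu)$ as $u\to \infty$.

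From Lemma \ref{lem:lbtseriesexp} applied to $v = Cu$ (and using $\ln(v) = \ln(u) + O(1)$, $\ln^{(2)}(v) = \ln^{(2)}(u) + O(1/\ln(u))$), I would derive the asymptotic identity
\begin{equation*}
W(Cu) = \ln(u) - \ln^{(2)}(u) + \ln(C) + \frac{\ln^{(2)}(u)}{\ln(u)}\bigl(1 + o_{u\to\infty}(1)\bigr).
\end{equation*}
Exponentiating (using $e^{\eta(u)} = 1 + \eta(u) + o(\eta(u))$ for $\eta(u) = \frac{\ln^{(2)}(u)}{\ln(u)}(1+o(1))$) then yields
\begin{equation*}
e^{2 W(Cu)} = \frac{C^2\, u^2}{\ln^2(u)}\left(1 + \frac{2\ln^{(2)}(u)}{\ln(u)}\bigl(1+o(1)\bigr)\right),
\end{equation*}
while
\begin{equation*}
W(Cu) + c'\ln(2) = \ln(u)\left(1 - \frac{\ln^{(2)}(u)}{\ln(u)} + o\!\left(\frac{\ln^{(2)}(u)}{\ln(u)}\right)\right),
\end{equation*}
since $\ln(C) + c'\ln(2) = \ln(\ln(2)/c) = O(1)$.

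Multiplying these two expressions and using $\frac{2^{2c'}C^2}{\ln(2)} = \ln(2)/c^2$ gives, after the cross term $\frac{2\ln^{(2)}(u)}{\ln(u)} - \frac{\ln^{(2)}(u)}{\ln(u)} = \frac{\ln^{(2)}(u)}{\ln(u)}$ collapses,
\begin{equation*}
\psi^{(-1)}(u)^2 \log\!\left(\psi^{(-1)}(u)\right)
= \frac{\ln(2)\, u^2}{c^2\ln(u)}\left(1 + \frac{\ln^{(2)}(u)}{\ln(u)} + o\!\left(\frac{\ln^{(2)}(u)}{\ln(u)}\right)\right).
\end{equation*}
Finally, I convert to base-$2$ logarithms using $\ln(u) = \ln(2)\log(u)$ and $\log^{(2)}(u) = \ln^{(2)}(u)/\ln(2) + O(1)$, so that $\frac{\ln^{(2)}(u)}{\ln(u)} = \frac{\log^{(2)}(u)}{\log(u)}\bigl(1+o(1)\bigr)$, which produces exactly the claimed identity.

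The main obstacle is bookkeeping: the $W$-function expansion has three layers of asymptotic terms, and after squaring and multiplying I must ensure that constant shifts ($\ln(C)$, $c'\ln(2)$) get absorbed into the $o(\cdot)$ residue without overwhelming the $\log^{(2)}/\log$ second-order term. This requires keeping the error in $W(Cu)$ at the level $\frac{\ln^{(2)}(u)}{\ln(u)}(1+o(1))$ and verifying that all neglected contributions are indeed $o(\log^{(2)}(u)/\log(u))$ after the conversion between $\ln$ and $\log$.
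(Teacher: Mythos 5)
Your proof is correct and uses essentially the same ingredients as the paper: Lemma \ref{lem:kjhgvhjuhuhygatf} for the explicit $W$-formula for $\psi^{(-1)}$, and Lemma \ref{lem:lbtseriesexp} for the $W$-asymptotics. The only organizational difference is that the paper first isolates the expansion $\psi^{(-1)}(u) = \frac{u}{c\log u} + \frac{u\log^{(2)}u}{c\log^2 u} + o(\cdot)$ as a separate reusable lemma (Lemma \ref{lem:lkjodihuzehchehjz}, also needed in the proof of Corollary \ref{cor: Complex ellipsoid with exp2}) and then squares and takes logarithms of it, whereas you keep $\psi^{(-1)}$ in its exponential $W$-form, write the target quantity as $\frac{2^{2c'}}{\ln 2}\, e^{2W(Cu)}\bigl(W(Cu)+c'\ln 2\bigr)$, and substitute the $W$-expansion once at the end. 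Both routes require the same bookkeeping (absorbing the constant $\ln C + c'\ln 2$ and the $O(1/\ln u)$ corrections into the $o(\log^{(2)}u/\log u)$ residue), and your cross-term cancellation $\frac{2\ln^{(2)}u}{\ln u} - \frac{\ln^{(2)}u}{\ln u} = \frac{\ln^{(2)}u}{\ln u}$ and constant check $2^{2c'}C^2/\ln 2 = \ln 2/c^2$ are right.
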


\begin{proof}[Proof.]
See Appendix~\ref{sec:prooflkjodihuzehchehjz2}.
\end{proof}

\noindent
Now, starting from (\ref{eq:lkoijaopjzaozs})--(\ref{eq:lkoijaopjzaozs2}) and using Lemma \ref{lem:lkjodihuzehchehjz2} with $u=\log\left(\varepsilon^{-1}\right)$ together with the observation  
\begin{align*}
 O_{\varepsilon\to 0}\left(\frac{1}{\log\left(\psi^{(-1)}\left[\log \left(\varepsilon^{-1}\right) \right] \right)}\right)
=& \,  O_{\varepsilon\to 0}\left(\frac{1}{\log^{(2)} \left(\varepsilon^{-1}\right)}\right) \\
=& \,  o_{\varepsilon\to 0}\left(\frac{\log^{(3)}\left(\varepsilon^{-1}\right)}{\log^{(2)} \left(\varepsilon^{-1}\right)}\right),
\end{align*}
which employs Lemma \ref{lem:lkjodihuzehchehjz}, we obtain 
\begin{align*}
H \left(\varepsilon ; \mathcal{E}_{p}, \|\cdot\|_q \right)
=\frac{\alpha \, \log^2\left(\varepsilon^{-1}\right)}{2\, c \log^{(2)} \left(\varepsilon^{-1}\right)} 
\left[1 +  \frac{\log^{(3)}\left(\varepsilon^{-1}\right)}{\log^{(2)}\left(\varepsilon^{-1}\right)} + o_{\varepsilon\to 0}\left(\frac{\log^{(3)}\left(\varepsilon^{-1}\right)}{\log^{(2)} \left(\varepsilon^{-1}\right)}\right)\right].
\end{align*}
This concludes the proof of Corollary~\ref{cor: Complex ellipsoid with exp2}.

\subsection{Proofs of Lemmata \ref{lem:poiuhgfyyyyyyy} and \ref{lem: Bounds on analytic norm}}

\subsubsection{Proof of Lemma \ref{lem:poiuhgfyyyyyyy}}\label{sec:prooffouriercoefs}

We first observe that, for all $x\in \mathbb{R}$, 
\begin{align*}
\lvert f_1(x) - f_2(x)  \rvert
&= \left \lvert \sum_{k=-\infty}^{\infty} a^{(1)}_k e^{ikx}- \sum_{k=-\infty}^{\infty} a^{(2)}_k e^{ikx} \right \rvert \\
&\leq   \sum_{k=-\infty}^{\infty} \left \lvert a^{(1)}_k - a^{(2)}_k\right \rvert  
= \left\| a^{(1)}-a^{(2)} \right\|_{\ell^1(\mathbb{Z})}.
\end{align*}
Upon taking the supremum over all $x\in \mathbb{R}$, we hence get the desired upper bound 
\begin{equation*}
d_{2 \pi}(f_1, f_2)
\leq \left\| a^{(1)}-a^{(2)} \right\|_{\ell^1(\mathbb{Z})}.
\end{equation*}
The sought lower bound is obtained through Parseval's identity according to
\begin{align*}
\left\| a^{(1)}-a^{(2)}\right\|_{\ell^2(\mathbb{Z})}^2
&= \sum_{k=-\infty}^{\infty} \left \lvert a^{(1)}_k - a^{(2)}_k\right \rvert^2 \\
&= \frac{1}{2\pi} \int_0^{2 \pi} \lvert f_1(x) - f_2(x)  \rvert^2 \, d x \\
&\leq \sup_{x \in \mathbb{R}} \, \lvert f_1(x) - f_2(x)  \rvert^2
=d_{2 \pi}(f_1, f_2)^2.
\end{align*}
This concludes the proof.

\subsubsection{Proof of Lemma \ref{lem: Bounds on analytic norm}}\label{sec:prooftaylorcoefs}

We start by rewriting the metric $d_r$  according to 
\begin{align*}
d_r(f_1, f_2)
& =  \sup_{z \in D(0;r)} \left\lvert f_1(z)- f_2(z)\right\rvert \\
& =  \sup_{z \in D(0;r)} \left\lvert\tilde f_1\left(\frac{z}{r}\right)- \tilde f_2\left(\frac{z}{r}\right)\right\rvert \\
& =  \sup_{z \in D(0;1)} \left\lvert\tilde f_1\left(z\right)- \tilde f_2\left(z\right)\right\rvert,
\end{align*}
where $\tilde f_1$ and $\tilde f_2$ are analytic functions defined on the unit disk $D(0 ; 1)$ via their power series expansions
\begin{equation*}
\tilde f_1(z) = \sum_{k=0}^\infty \tilde a_{k}^{(1)} z ^{k}
\quad \text{and} \quad 
\tilde f_2(z) = \sum_{k=0}^\infty \tilde a_{k}^{(2)} z ^{k}.
\end{equation*}
Applying \cite[Theorem 3.1]{hutterMetricEntropyLimits2022}, we obtain 
\begin{equation}\label{eq: thuisbkjadjokezhfbe}
d_r(f_1, f_2) 
= \sup_{\|x\|_{\ell^2}=1} \left\|\left(\tilde a^{(1)}-\tilde a^{(2)}\right) \star x \, \right\|_{\ell^2},
\end{equation}
where $\star$ denotes the convolution operator (for sequences). The desired upper bound now follows directly from (\ref{eq: thuisbkjadjokezhfbe}) by 
Young's convolution inequality according to 
\begin{align*}
d_r(f_1, f_2) 
&= \sup_{\|x\|_{\ell^2}=1} \left\|\left(\tilde a^{(1)}-\tilde a^{(2)}\right) \star x \right\|_{\ell^2} \\
&\leq \sup_{\|x\|_{\ell^2}=1} \left\|x \right\|_{\ell^2} \left\|\tilde a^{(1)}-\tilde a^{(2)} \right\|_{\ell^1}
= \left\|\tilde a^{(1)}-\tilde a^{(2)} \right\|_{\ell^1}.
\end{align*}
The sought lower bound follows from (\ref{eq: thuisbkjadjokezhfbe}) by particularizing the choice of $x$ to 
\begin{equation*}
x_0 
\coloneqq (1, 0, 0, \dots) \in \ell^2,
\end{equation*}
so that 
\begin{align*}
d_r(f_1, f_2) 
&= \sup_{\|x\|_{\ell^2}=1} \left\|\left(\tilde a^{(1)}-\tilde a^{(2)}\right) \star x \right\|_{\ell^2} \\
&\geq \left\|\left(\tilde a^{(1)}-\tilde a^{(2)}\right) \star x_0 \right\|_{\ell^2}
= \left\|\tilde a^{(1)} - \tilde a^{(2)}\right\|_{\ell^2}.
\end{align*}
This concludes the proof.

\subsection{Proofs of Theorem~\ref{thm:exptypecompanlyticfctme} and Corollary~\ref{cor:exptypecompanlyticfctme}}\label{sec:proofthm14cor15ss}

We start by developing material that is pertinent to both proofs. Let $\{a_k\}_{k\in\mathbb{N}}$ be the sequence of Taylor series coefficients of 
\begin{equation*}
f(z)
=\sum_{k=0}^\infty a_k z^k,
\quad \text{for all } z\in \mathbb{C}.
\end{equation*}
We start by defining the embedding
\begin{equation*}
\iota \colon f \in \mathcal{F}_{\text{exp}}^{A,C} \mapsto \{a_k\}_{k\in\mathbb{N}} \in \ell^{\infty}(\mathbb{N}),
\end{equation*}
where $\{a_k\}_{k\in\mathbb{N}} \in \ell^{\infty}(\mathbb{N})$ follows from arguments similar to those employed in the proof of Lemma \ref{lem:firstpartinclusionexpfct}. 
We wish to circumscribe and inscribe $\iota(\mathcal{F}_{\text{exp}}^{A,C})$ with ellipsoids.


\begin{lemma}\label{lem:firstpartinclusionexpfct}
Let $A$ and $C$ be positive real constants. It holds that 
\begin{equation*}
\iota \left(\mathcal{F}_{\text{exp}}^{A,C} \right)
\subseteq \mathcal{E}_{\infty}(\{\mu_k\}_{k\in\mathbb{N}}),
\end{equation*}
where
\begin{equation}\label{eq:poijhvddsmskj}
\begin{cases}
    \mu_0
    =  C, \\
    \mu_k
    = C \exp \left\{-k \left(\ln (k) - 1 - \ln (A) \right)\right\},\quad \text{for } k\geq 1.
\end{cases}
\end{equation}

\end{lemma}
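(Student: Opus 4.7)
The plan is to apply Cauchy's estimate (Lemma \ref{lem:cauchyestimate}) on disks of arbitrary radius and then optimize the radius to obtain the sharpest bound on each Taylor coefficient. More precisely, fix $f \in \mathcal{F}_{\text{exp}}^{A,C}$ with Taylor expansion $f(z) = \sum_{k \in \mathbb{N}} a_k z^k$, which exists globally since $f$ is entire. We want to show that $|a_k| \leq \mu_k$ for every $k \in \mathbb{N}$, as this is precisely the statement $\{a_k\}_{k \in \mathbb{N}} \in \mathcal{E}_{\infty}(\{\mu_k\}_{k \in \mathbb{N}})$.

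For the case $k = 0$, I would simply evaluate (\ref{eq: popoloihj}) at $z = 0$ to conclude $|a_0| = |f(0)| \leq C = \mu_0$. For $k \geq 1$, the key observation is that on any disk $D(0 \semcol r)$ with $r > 0$, the exponential-type bound (\ref{eq: popoloihj}) yields the uniform estimate $\sup_{z \in D(0 ; r)} |f(z)| \leq C e^{Ar}$. Invoking Cauchy's estimate with $M = C e^{Ar}$ then gives
\begin{equation*}
|a_k| \leq \frac{C e^{Ar}}{r^k}, \quad \text{for all } r > 0.
\end{equation*}
Since this bound holds for every $r > 0$, I can minimize the right-hand side over $r$.

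The minimization is elementary: differentiating $Ar - k \ln(r)$ with respect to $r$ yields the critical point $r^* = k/A$, which is indeed a minimizer as the second derivative $k/r^2$ is positive. Substituting $r = k/A$ gives
\begin{equation*}
|a_k| \leq \frac{C e^{k}}{(k/A)^k} = C \left(\frac{eA}{k}\right)^k = C \exp\left\{-k \left(\ln(k) - 1 - \ln(A)\right)\right\} = \mu_k,
\end{equation*}
exactly matching the definition (\ref{eq:poijhvddsmskj}). Taking the supremum over $k$ then yields $\sup_{k \in \mathbb{N}} |a_k/\mu_k| \leq 1$, i.e., $\iota(f) \in \mathcal{E}_{\infty}(\{\mu_k\}_{k \in \mathbb{N}})$, as desired.

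There is no real obstacle here: the argument is a textbook optimization of Cauchy's estimate for entire functions of exponential type. The only point deserving a sentence of justification is that the embedding $\iota$ is well-defined into $\ell^{\infty}(\mathbb{N})$, but this is an automatic consequence of the very bound we just derived, since $\mu_k \to 0$ as $k \to \infty$ (in fact super-exponentially), so $\{a_k\}_{k \in \mathbb{N}}$ is not just bounded but decays rapidly.
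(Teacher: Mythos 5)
Your proof is correct and follows essentially the same route as the paper: apply Cauchy's estimate on a disk of radius $R$, use the exponential-type bound $\sup_{|z|\leq R}|f(z)|\leq Ce^{AR}$, and optimize over $R$ to find $R_k=k/A$, yielding precisely $\mu_k$. The only cosmetic difference is that for $k=0$ you evaluate directly at $z=0$, while the paper reads off the bound from the same infimum expression by letting $R\to 0$.
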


\begin{proof}[Proof.]
See Appendix~\ref{sec:prooffirstpartinclusionexpfct}.
\end{proof}

\noindent

\begin{lemma}\label{lem:secdpartinclusionexpfct}
Let $A$ and $C$ be positive real constants and set
\begin{equation}\label{eq:stairwaytoheaven3}
    \tilde A \coloneqq  \frac{A}{2}
    \quad \text{and} \quad 
    \tilde C \coloneqq C \left[\sup_{k \in \mathbb{N}^*} \frac{\sqrt{2 \pi k}\, e^{\frac{1}{12k}}}{2^k}\right]^{-1}.
\end{equation}
Then, 
\begin{equation*}
\mathcal{E}_{\infty}(\{\tilde \mu_k\}_{k\in\mathbb{N}})
\subseteq \iota \left(\mathcal{F}_{\text{exp}}^{A,C}\right),
\end{equation*}
where
\begin{equation}\label{eq:stairwaytoheaven2}
\begin{cases}
    \tilde \mu_0
    =  \tilde C,\\
    \tilde \mu_k
    = \tilde C \exp \left\{-k \left(\ln (k) - 1 - \ln( \tilde A) \right)\right\}, \quad \text{for }k\geq 1.
\end{cases}
\end{equation}

\end{lemma}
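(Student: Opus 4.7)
The plan is to reverse-engineer Lemma~\ref{lem:firstpartinclusionexpfct}. Starting from a sequence $\{a_k\}_{k\in\mathbb{N}}$ with $\lvert a_k\rvert \leq \tilde\mu_k$, I will first observe that $\tilde\mu_k = \tilde C\, (e\tilde A/k)^k$ for $k\geq 1$ decays super-exponentially, so the power series $f(z) = \sum_{k=0}^\infty a_k z^k$ converges for every $z\in\mathbb{C}$ and defines an entire function. It then remains to verify the exponential-type bound $\lvert f(z)\rvert \leq C\, e^{A\lvert z\rvert}$ for all $z\in\mathbb{C}$, which will place $f$ in $\mathcal{F}_{\text{exp}}^{A,C}$ and thereby witness $\{a_k\}_{k\in\mathbb{N}}=\iota(f)\in\iota(\mathcal{F}_{\text{exp}}^{A,C})$.

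By the triangle inequality, it suffices to compare $\sum_{k\geq 1} \tilde\mu_k \lvert z\rvert^k$ with the series for $C\, e^{A\lvert z\rvert}$. The key calculation is to substitute $\tilde A = A/2$ to obtain
\begin{equation*}
\frac{(e\tilde A\lvert z\rvert)^k}{k^k} = \frac{e^k}{k^k}\cdot\frac{(A\lvert z\rvert)^k}{2^k},
\end{equation*}
and then to apply the Stirling upper bound $k! \leq \sqrt{2\pi k}\,(k/e)^k\, e^{1/(12k)}$, which rearranges to $e^k/k^k \leq \sqrt{2\pi k}\, e^{1/(12k)}/k!$. Combining these yields
\begin{equation*}
\frac{(e\tilde A\lvert z\rvert)^k}{k^k} \leq \frac{\sqrt{2\pi k}\, e^{1/(12k)}}{2^k}\cdot \frac{(A\lvert z\rvert)^k}{k!} \leq M\cdot\frac{(A\lvert z\rvert)^k}{k!},
\end{equation*}
where $M \coloneqq \sup_{k\in\mathbb{N}^*}\frac{\sqrt{2\pi k}\, e^{1/(12k)}}{2^k}$ is finite by Lemma~\ref{lem:studyvariationscompfctbis}.

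Summing over $k\geq 1$ and adding the $k=0$ contribution $\tilde\mu_0 = \tilde C$, I will arrive at
\begin{equation*}
\lvert f(z)\rvert \leq \tilde C + \tilde C\, M\bigl(e^{A\lvert z\rvert}-1\bigr) = \tilde C(1-M) + \tilde C\, M\, e^{A\lvert z\rvert}.
\end{equation*}
The final step invokes the other half of Lemma~\ref{lem:studyvariationscompfctbis}, namely $M\geq 1$, to discard the nonpositive term $\tilde C(1-M)\leq 0$ and, upon inserting $\tilde C\, M = C$ from~(\ref{eq:stairwaytoheaven3}), conclude $\lvert f(z)\rvert \leq C\, e^{A\lvert z\rvert}$. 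The main obstacle is the bookkeeping around the constant $M$: the factor of $1/2$ built into $\tilde A = A/2$ is precisely what tames the Stirling residual $e^k/k^k$ against $1/k!$, and the normalization $\tilde C = C/M$ is calibrated to absorb the multiplicative loss arising from this comparison. The apparently awkward $k=0$ term is handled thanks to $M\geq 1$, showing that the definitions in~(\ref{eq:stairwaytoheaven3}) are sharp for the argument.
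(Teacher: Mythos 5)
Your proposal is correct and follows essentially the same route as the paper's proof: triangle inequality, the identity $\tilde\mu_k=\tilde C\,(e\tilde A/k)^k$, Stirling's bound $e^k/k^k\le\sqrt{2\pi k}\,e^{1/(12k)}/k!$ together with $\tilde A=A/2$ to absorb the $2^k$, and finally $M\ge 1$ (Lemma~\ref{lem:studyvariationscompfctbis}) and $\tilde C M=C$ to fold in the $k=0$ term. The only addition beyond the paper is your explicit preliminary remark that the super-exponential decay of $\tilde\mu_k$ ensures $f$ is entire, which the paper leaves implicit.
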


\begin{proof}[Proof.]
See Appendix~\ref{sec:proofsecdpartinclusionexpfct}.
\end{proof}

\noindent
Combining Lemmata \ref{lem:firstpartinclusionexpfct} and \ref{lem:secdpartinclusionexpfct} 
with Lemma \ref{lem: Bounds on analytic norm}, yields
\begin{align}
    H \left(\varepsilon ; \mathcal{E}_{\infty}(\{\tilde \mu_k\}_{k\in\mathbb{N}}), \|\cdot \|_2 \right)
    &\leq H \left(\varepsilon ; \mathcal{F}_{\text{exp}}^{A,C}, d_{1} \right) \label{eq: ljsjgirlssdnf1}\\
    &\leq H \left(\varepsilon ; \mathcal{E}_{\infty}(\{\mu_k\}_{k\in\mathbb{N}}), \|\cdot \|_1 \right). \label{eq: ljsjgirlssdnf2}
\end{align}
We are now ready to proceed to the proof of Theorem~\ref{thm:exptypecompanlyticfctme}.

\subsubsection{Proof of Theorem~\ref{thm:exptypecompanlyticfctme}}

Observe that, from (\ref{eq:stairwaytoheaven2}) and under the convention $0\log(0)=0$, we have 
\begin{equation*}
\tilde \mu_k 
= \tilde C \exp \left\{-k \left(\ln (k) - 1 - \ln (\tilde A) \right)\right\}
= \tilde C \exp \left\{-\ln (2) \, \psi(k)\right\},
\end{equation*}
for all $k\in \mathbb{N}$, with
\begin{equation*}
\psi(k)= c\, k \,  \left(\log (k) - c' \right),
\quad \text{where} \,\,
c= 1
\text{ and }
c'= \log \left(e \tilde A \right).
\end{equation*}
Application of Corollary~\ref{cor: Complex ellipsoid with exp} with $c= 1$ and $c'= \log \left(e \tilde A \right)$ now yields
\begin{align}
& H \left(\varepsilon ; \mathcal{E}_{\infty}(\{\tilde \mu_k\}_{k\in\mathbb{N}}), \|\cdot\|_2 \right) \label{eq: jsqlfhgytgfvdbcz 111}\\
& = \left(e \tilde A\right)^2\,\frac{\exp\left\{2 \beta_1(\varepsilon) \right\}}{\ln(2)} \left[\beta_1(\varepsilon) + \frac{1}{2}\right] \left[1
+ O_{\beta_1(\varepsilon)\to \infty}\left(\exp\left\{- \beta_1(\varepsilon) \right\}\right)\right], \label{eq: jsqlfhgytgfvdbcz 121}
\end{align}
where
\begin{equation*}
\beta_1(\varepsilon) 
\coloneqq  W\left( \frac{\ln \left(\varepsilon^{-1}\right)}{e \,  \tilde A}\right).
\end{equation*}
Likewise, upon application of Corollary~\ref{cor: Complex ellipsoid with exp} with 
$c= 1$ and
$c'= \log (e  A )$,
we obtain
\begin{align}
& H \left(\varepsilon ; \mathcal{E}_{\infty}(\{ \mu_k\}_{k\in\mathbb{N}}), \|\cdot\|_1 \right) \label{eq: jsqlfhgytgfvdbcz 211}\\
& = \left(e  A\right)^2\,\frac{\exp\left\{2 \beta_2(\varepsilon) \right\}}{\ln(2)} \left[\beta_2(\varepsilon) + \frac{1}{2}\right] \left[1
+ O_{\beta_2(\varepsilon)\to \infty}\left(\exp\left\{- \beta_2(\varepsilon) \right\}\right)\right], \label{eq: jsqlfhgytgfvdbcz 221}
\end{align}
where
\begin{equation*}
\beta_2(\varepsilon) 
\coloneqq  W\left( \frac{\ln \left(\varepsilon^{-1}\right)}{e\, A}\right).
\end{equation*}
Using (\ref{eq: jsqlfhgytgfvdbcz 111})--(\ref{eq: jsqlfhgytgfvdbcz 121}) and (\ref{eq: jsqlfhgytgfvdbcz 211})--(\ref{eq: jsqlfhgytgfvdbcz 221}) in (\ref{eq: ljsjgirlssdnf1})--(\ref{eq: ljsjgirlssdnf2}), yields
\begin{align*}
    &  \left(e \tilde A\right)^2\,\frac{\exp\left\{2 \beta_1(\varepsilon) \right\}}{\ln(2)}  \left[\beta_1(\varepsilon)+ \frac{1}{2}\right] \left[1
+ O_{\beta_1(\varepsilon)\to \infty}\left(\exp\left\{- \beta_1(\varepsilon) \right\}\right)\right] \\
    & \leq   H \left(\varepsilon ; \mathcal{F}_{\text{exp}}^{A,C}, d_{1} \right)  \\
    & \leq \left(e  A\right)^2\,\frac{\exp\left\{2 \beta_2(\varepsilon) \right\}}{\ln(2)} \left[\beta_2(\varepsilon) + \frac{1}{2}\right] \left[1
+ O_{\beta_2(\varepsilon)\to \infty}\left(\exp\left\{- \beta_2(\varepsilon) \right\}\right)\right],
\end{align*}
thereby concluding the proof.

\subsubsection{Proof of Corollary~\ref{cor:exptypecompanlyticfctme}}
Upon application of Corollary~\ref{cor: Complex ellipsoid with exp2} with 
$c = 1$ and  
$c'= \log (e  \tilde A )$,
we obtain
\begin{align}
&  H \left(\varepsilon ; \mathcal{E}_{\infty}(\{\tilde \mu_k\}_{k\in\mathbb{N}}), \|\cdot\|_2 \right) \label{eq: jsqlfhgytgfvdbcz 11}\\
& = \frac{\log^{2}\left(\varepsilon^{-1}\right)}{{\log^{(2)} \left(\varepsilon^{-1}\right)}} \left(1+ \frac{\log^{(3)} \left(\varepsilon^{-1}\right)}{\log^{(2)} \left(\varepsilon^{-1}\right)} + o_{\varepsilon\to 0}\left(\frac{\log^{(3)} \left(\varepsilon^{-1}\right)}{\log^{(2)} \left(\varepsilon^{-1}\right)}\right)\right).\label{eq: jsqlfhgytgfvdbcz 12}
\end{align}
Likewise, Corollary~\ref{cor: Complex ellipsoid with exp2} with $c=1$ and $c'= \log (e  A )$,
yields
\begin{align}
&  H \left(\varepsilon ; \mathcal{E}_{\infty}(\{\mu_k\}_{k\in\mathbb{N}}), \|\cdot\|_1 \right) \label{eq: jsqlfhgytgfvdbcz 21}\\
& =  \frac{\log^{2}\left(\varepsilon^{-1}\right)}{{\log^{(2)} \left(\varepsilon^{-1}\right)}} \left(1+ \frac{\log^{(3)} \left(\varepsilon^{-1}\right)}{\log^{(2)} \left(\varepsilon^{-1}\right)} + o_{\varepsilon\to 0}\left(\frac{\log^{(3)} \left(\varepsilon^{-1}\right)}{\log^{(2)} \left(\varepsilon^{-1}\right)}\right)\right).\label{eq: jsqlfhgytgfvdbcz 22}
\end{align}
Using (\ref{eq: jsqlfhgytgfvdbcz 11})--(\ref{eq: jsqlfhgytgfvdbcz 12}) and (\ref{eq: jsqlfhgytgfvdbcz 21})--(\ref{eq: jsqlfhgytgfvdbcz 22}) in (\ref{eq: ljsjgirlssdnf1})--(\ref{eq: ljsjgirlssdnf2}), results in 
\begin{equation*}
H \left(\varepsilon ; \mathcal{F}_{\text{exp}}^{A,C}, d_{1} \right)
= \frac{\log^{2}\left(\varepsilon^{-1}\right)}{{\log^{(2)} \left(\varepsilon^{-1}\right)}} \left(1+ \frac{\log^{(3)} \left(\varepsilon^{-1}\right)}{\log^{(2)} \left(\varepsilon^{-1}\right)} + o_{\varepsilon\to 0}\left(\frac{\log^{(3)} \left(\varepsilon^{-1}\right)}{\log^{(2)} \left(\varepsilon^{-1}\right)}\right)\right),
\end{equation*}
thereby concluding the proof.

\subsection{Proofs of Auxiliary Results}

\subsubsection{Proof of Lemma \ref{lemlkjhgfdaa}}\label{sec:prooflemlkjhgfdaa}

We argue that covering the ellipsoid $\mathcal{E}_p$ by balls of radius $\varepsilon$ essentially reduces to covering the corresponding finite-dimensional ellipsoid $\mathcal{E}_p^{d_{\varepsilon}}$ obtained by retaining the first $d_{\varepsilon}$ semi-axes of $\mathcal{E}_p$.
More precisely, we combine the inequality 
\begin{equation*}
N \left(\varepsilon ; \mathcal{E}_p, \|\cdot\|_q\right)
\geq N \left(\varepsilon ; \mathcal{E}_p^{d_{\varepsilon}}, \|\cdot\|_q\right)
\end{equation*}
with Theorem~\ref{thm: metric entropy of finite ellipsoidsLB} to obtain
\begin{equation*}
N \left(\varepsilon ; \mathcal{E}_p, \|\cdot\|_q \right)^{\frac{1}{{\sigma_{\mathbb{K}}(d_{\varepsilon})}}} \, \varepsilon
\geq \kminus \,  {\sigma_{\mathbb{K}}(d_\varepsilon)}^{\left(\frac{1}{q}-\frac{1}{p}\right)} \, \bar \mu_{d_\varepsilon}.
\end{equation*}
Now, using the definition of $d_{\varepsilon}$ in (\ref{eq: definition of underbar n pt1mg5}), this yields 
\begin{equation*}
\varepsilon 
\geq {\kminus}\, {\kplus}^{-1} \, \mu_{d_{\varepsilon}},
\end{equation*}
which, upon setting $\cfracminus \coloneqq {\kminus}\, {\kplus}^{-1} $ and noting that (\ref{eq:obviousbutneeded}) then implies $\cfracminus \leq 1$, concludes the proof.

\subsubsection{Proof of Lemma \ref{lemlkjhgfdaa2}}\label{sec:prooflemlkjhgfdaa2}

The proof is effected by reducing the infinite-dimensional covering problem to a finite-dimensional one followed by application of Theorem~\ref{thm: metric entropy of finite ellipsoids}.

\begin{lemma}\label{lem:lemboundtriangleineqlike}
Let $p, q \in [1, \infty]$ and $\rho>0$ be real numbers, and let $d\geq1$ be an integer.
 Let $\mathcal{E}_p$ be an infinite-dimensional ellipsoid with exponentially decaying semi-axes $\{\mu_n\}_{n \in \mathbb{N}^*}$, and let $\mathcal{E}_p^d$ be the $d$-dimensional ellipsoid obtained by retaining the first $d$ semi-axes of $\mathcal{E}_p$.
 Then, there exist an integer $d^*$ and a constant $K\geq 1$ not depending on $d$, such that, for all $d\geq d^*$, it holds that
 \begin{equation*}
N \left(\rho; \mathcal{E}_p^d, \|\cdot\|_q \right) 
\geq N \left(\bar \rho; \mathcal{E}_p, \|\cdot\|_q \right),
\ \ \text{with } \bar \rho \coloneqq 
\begin{cases}
    \left(\rho^q + K\mu_{d+1}^q \right)^{1/q}, \quad & \text{if } 1\leq q < \infty, \\
 \ \max\{\rho,\mu_{d+1}\} , \quad & \text{if } q= \infty.
    \end{cases}
 \end{equation*}
\end{lemma}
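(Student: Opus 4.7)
The plan is to produce, from any $\rho$-covering of the truncated ellipsoid $\mathcal{E}_p^d$, an explicit $\bar \rho$-covering of the full ellipsoid $\mathcal{E}_p$ with the same cardinality; this immediately yields the claimed inequality on covering numbers.

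First I would fix an optimal covering $\{x_1, \dots, x_N\} \subseteq \mathbb{K}^d$ of $\mathcal{E}_p^d$ in $\|\cdot\|_q$-norm with $N = N(\rho \semcol \mathcal{E}_p^d, \|\cdot\|_q)$ elements, and lift each $x_i$ to $\tilde x_i \in \ell^p(\mathbb{N}^* \semcol \mathbb{K})$ by padding with zeros. Given $y \in \mathcal{E}_p$, write $y^{(d)} := (y_1, \dots, y_d)$ and $y^{(>d)} := (y_{d+1}, y_{d+2}, \dots)$. The truncation satisfies $\|y^{(d)}\|_{p,\mu} \leq \|y\|_{p,\mu} \leq 1$, so $y^{(d)} \in \mathcal{E}_p^d$ and there exists $i$ such that $\|y^{(d)} - x_i\|_q \leq \rho$. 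Moreover, the ellipsoid constraint $\|y\|_{p,\mu} \leq 1$ forces $|y_n| \leq \mu_n$ for every $n$: this is immediate when $p = \infty$, and when $p < \infty$ it follows from $|y_n/\mu_n|^p \leq \sum_{k \in \mathbb{N}^*} |y_k/\mu_k|^p \leq 1$.

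From here, for $q = \infty$, the triangle inequality combined with $|y_n| \leq \mu_n \leq \mu_{d+1}$ for $n > d$ yields
\[
\|y - \tilde x_i\|_\infty = \max\{\|y^{(d)} - x_i\|_\infty,\ \|y^{(>d)}\|_\infty\} \leq \max\{\rho,\, \mu_{d+1}\},
\]
which is exactly the claim. For $q < \infty$, the disjointness of the supports of $y^{(d)} - x_i$ (viewed in the infinite-dimensional space) and $y^{(>d)}$ gives
\[
\|y - \tilde x_i\|_q^q = \|y^{(d)} - x_i\|_q^q + \|y^{(>d)}\|_q^q \leq \rho^q + \sum_{n > d} \mu_n^q,
\]
so the remaining task is to control the tail sum $\sum_{n > d} \mu_n^q$ uniformly in $d$ by a constant multiple of $\mu_{d+1}^q$.

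The main obstacle lies in this last step, and motivates the threshold $d^*$. I would invoke Lemma~\ref{lem:zetaisod}, which provides $\kappa > 0$ such that $\zeta(n) \geq \kappa$ for all $n > t^* + 2$. Since $\mu_{n+1}/\mu_n = \exp\{-\ln(2)\,\zeta(n+1)\}$, this translates into the uniform ratio bound $\mu_{n+1}/\mu_n \leq 2^{-\kappa}$, hence $\mu_n \leq \mu_{d+1}\, 2^{-\kappa(n-d-1)}$ for all $n > d$, provided $d \geq d^* := \lceil t^* \rceil + 2$. Summing the resulting geometric series yields $\sum_{n > d} \mu_n^q \leq \mu_{d+1}^q / (1 - 2^{-\kappa q})$, so setting $K := 1/(1 - 2^{-\kappa q}) \geq 1$ gives $\|y - \tilde x_i\|_q \leq (\rho^q + K\mu_{d+1}^q)^{1/q} = \bar\rho$, as required. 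Both $d^*$ and $K$ depend only on $\psi$ and on $q$, not on $d$, and the constructed $\{\tilde x_1, \dots, \tilde x_N\}$ is a $\bar \rho$-covering of $\mathcal{E}_p$, so that $N(\bar \rho \semcol \mathcal{E}_p, \|\cdot\|_q) \leq N = N(\rho \semcol \mathcal{E}_p^d, \|\cdot\|_q)$, completing the proof.
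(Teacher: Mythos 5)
Your proposal is correct and follows essentially the same route as the paper: lift a $\rho$-covering of $\mathcal{E}_p^d$ by zero-padding, split $\|y - \tilde x_i\|_q^q$ into the head and the tail term, bound the tail coordinates by $|y_n|\le\mu_n$, and sum a geometric series to get a uniform constant $K$. The only cosmetic difference is that you invoke Lemma~\ref{lem:zetaisod} (a uniform lower bound on the $\psi$-difference $\zeta$) and chain consecutive ratios $\mu_{n+1}/\mu_n$, whereas the paper invokes Lemma~\ref{lem:decayrelsemiaxesratiol} (which packages that same geometric ratio bound $\mu_n/\mu_m \le 2^{c(m-n)}$ directly); the two auxiliary lemmas rest on the identical monotonicity of $t\mapsto\psi(t)/t$ and yield the same conclusion.
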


\begin{proof}[Proof.]
See Appendix~\ref{sec:prooflemboundtriangleineqlike}.
\end{proof}

\noindent
Now, let $d^*$ and $K\geq 1$ be as in  Lemma \ref{lem:lemboundtriangleineqlike} and let $\varepsilon^*_1>0$ be such that $d_{\varepsilon^*_1}-1\geq d^*$.
We can further assume, without loss of generality, that 
\begin{equation}\label{eq:jhuyhftdrtghjq}
\varepsilon 
> K^{1/q} \, \mu_{d_{\varepsilon}},
\end{equation}
as, otherwise, the statement of Lemma \ref{lemlkjhgfdaa2} follows trivially by setting ${\cfracplus \coloneqq K^{1/q}}\geq 1$.
Next, we define the radius 
\begin{equation}\label{eq:defrhosneww}
\rho 
\coloneqq 
\begin{cases}
 \left(\varepsilon^q - K \mu_{d_{\varepsilon}}^q \right)^{1/q}, \quad & \text{if } 1\leq q < \infty, \\
 \ \varepsilon, \quad & \text{if } q= \infty,
\end{cases}
\end{equation}
which is positive by assumption (\ref{eq:jhuyhftdrtghjq}).
As a direct consequence of (\ref{eq:defrhosneww}), we obtain
\begin{equation*}
    \varepsilon = \bar  \rho \coloneqq 
    \begin{cases}
    \left(\rho^q + K \mu_{d_{\varepsilon}}^q \right)^{1/q}, \quad & \text{if } 1\leq q < \infty, \\
 \ \varepsilon, \quad & \text{if } q= \infty,
    \end{cases}
\end{equation*}
so that $    N \left(\varepsilon ; \mathcal{E}_p, \|\cdot\|_q \right)
= N \left(\bar \rho; \mathcal{E}_p, \|\cdot\|_q \right)$.
Applying Lemma \ref{lem:lemboundtriangleineqlike} with $d=d_{\varepsilon}-1$, then yields
\begin{equation}\label{eq:applicationoflemboundtriangleineqlike}
N \left(\varepsilon ; \mathcal{E}_p, \|\cdot\|_q \right)
\leq N \left(\rho ; \mathcal{E}_p^{d_{\varepsilon}-1}, \|\cdot\|_q \right).
\end{equation}
Our strategy now consists of first upper-bounding $\rho$ and then using the definition (\ref{eq:defrhosneww}) to obtain a corresponding bound on $\varepsilon$.
Specifically, we employ Theorem~\ref{thm: metric entropy of finite ellipsoids} to obtain
\begin{equation}\label{eq: kljhgvwww}
\rho 
\leq \frac{\kplus \,  {\sigma_{\mathbb{K}}(d_{\varepsilon} - 1)}^{\left(\frac{1}{q}-\frac{1}{p}\right)} \, \bar \mu_{d_{\varepsilon} - 1}}{N \left(\rho ; \mathcal{E}_p^{d_{\varepsilon} - 1}, \|\cdot\|_q\right)^{\frac{1}{{\sigma_{\mathbb{K}}(d_{\varepsilon} - 1)}}}}.
\end{equation}
The assumption $\rho \leq 2\mu_{d_{\varepsilon} - 1}$ required by Theorem~\ref{thm: metric entropy of finite ellipsoids} will be verified below, in (\ref{eq: znkjlevfbhsdfs}).
Further, by (\ref{eq:applicationoflemboundtriangleineqlike})
and the fact that ${H \left(\varepsilon ; \mathcal{E}^{d_\varepsilon}_p, \|\cdot\|_q\right)}$ grows superlinearly in $d_\varepsilon$, there exists $\varepsilon^*_2>0$ such that the hypothesis (\ref{eq:thisisassuptionond}) needed in Theorem~\ref{thm: metric entropy of finite ellipsoids}, that is,
\begin{equation*}
H \left(\rho ; \mathcal{E}^{d_\varepsilon}_p, \|\cdot\|_q \right) \geq 2 \, {\sigma_{\mathbb{K}}(d_{\varepsilon})},
\quad \text{for all } \rho\in [0, 2 \mu_{d_{\varepsilon}}],
\end{equation*}
is satisfied for all $\varepsilon<\varepsilon^*_2$. 
Upon application of (\ref{eq:applicationoflemboundtriangleineqlike}), the bound (\ref{eq: kljhgvwww}) becomes
\begin{equation}\label{eq: kljhgvwwwbis}
\rho 
\leq \frac{\kplus \,  {\sigma_{\mathbb{K}}(d_{\varepsilon} - 1)}^{\left(\frac{1}{q}-\frac{1}{p}\right)} \, \bar \mu_{d_{\varepsilon} - 1}}{N \left(\varepsilon ; \mathcal{E}_p,  \|\cdot\|_q\right)^{\frac{1}{{\sigma_{\mathbb{K}}(d_{\varepsilon} - 1)}}}}.
\end{equation}
Moreover, we have 
\begin{equation}\label{eq: kljhgvwwwter}
\frac{\kplus \,  {\sigma_{\mathbb{K}}(d_{\varepsilon} - 1)}^{\left(\frac{1}{q}-\frac{1}{p}\right)} \, \bar \mu_{d_{\varepsilon} - 1}}{N \left(\varepsilon ; \mathcal{E}_p,  \|\cdot\|_q \right)^{\frac{1}{{\sigma_{\mathbb{K}}(d_{\varepsilon} - 1)}}}}
\leq \mu_{d_{\varepsilon} - 1}
\end{equation}
as a direct consequence of the definition (\ref{eq: definition of underbar n pt1mg5}) of $d_{\varepsilon}$.
Combining (\ref{eq: kljhgvwwwbis}) and (\ref{eq: kljhgvwwwter}), we now get an upper bound on the radius $\rho$ according to
\begin{equation}\label{eq: znkjlevfbhsdfs}
\rho
\stackrel{(\ref{eq: kljhgvwwwbis})}{\leq}  \frac{\kplus \,  {\sigma_{\mathbb{K}}(d_{\varepsilon} - 1)}^{\left(\frac{1}{q}-\frac{1}{p}\right)} \, \bar \mu_{d_{\varepsilon} - 1}}{N \left(\varepsilon ; \mathcal{E}_p,  \|\cdot\|_q\right)^{\frac{1}{{\sigma_{\mathbb{K}}(d_{\varepsilon} - 1)}}}} 
\stackrel{(\ref{eq: kljhgvwwwter})}{\leq }\mu_{d_{\varepsilon} - 1}. 
\end{equation}
For all $\varepsilon< \varepsilon^*\coloneqq \min\{\varepsilon^*_1, \varepsilon^*_2\}$, combining (\ref{eq: znkjlevfbhsdfs}) with (\ref{eq:defrhosneww}) now yields
\begin{equation*}
\varepsilon 
=
\begin{cases}
\left(\rho^q+ K\mu_{d_{\varepsilon}}^q\right)^{1/q}
\leq \left(\mu_{d_{\varepsilon} - 1}^q+ K\mu_{d_{\varepsilon}}^q\right)^{1/q}
\leq \cfracplus \mu_{d_{\varepsilon} - 1}, \quad & \text{if } 1\leq q < \infty, \\
 \ \rho \leq \mu_{d_{\varepsilon} - 1}, \quad & \text{if } q= \infty,
\end{cases}
\end{equation*}
where we have used that the semi-axes are non-increasing and we set ${\cfracplus\coloneqq (1+K)^{1/q} \geq 1}$.
This concludes the proof.

\subsubsection{Proof of Lemma \ref{lem:lemmainclusionofellipsoidsfouryeah}}\label{sec:prooflemmainclusionofellipsoidsfouryeah}

We first define the sequences
\begin{equation}\label{eq:poiuyhgddsdmsldnfbfjheeee}
\mu_n \coloneqq M e^{-s n} \quad \text{and} \quad 
\tilde \mu_{n} \coloneqq  M e^{-s \lfloor n/ 2 \rfloor}, 
\quad \text{for all } n \in \mathbb{N},
\end{equation}
and fix $f \in \mathcal{A}_s(M)$ with Fourier series coefficients $a_k$.
Then, we note that 
\begin{align*}
\sum_{n=1}^\infty \, \left\lvert \frac{\tilde a_n}{\tilde \mu_n} \right\rvert
&= \left\lvert \frac{\tilde a_1}{\tilde \mu_1} \right\rvert + \sum_{n=1}^\infty \, \left[\left\lvert \frac{\tilde a_{2n}}{\tilde \mu_{2n}} \right\rvert + \left\lvert \frac{\tilde a_{2n+1}}{\tilde \mu_{2n+1}} \right\rvert\right] \\
&= \left\lvert \frac{a_0}{ \mu_0} \right\rvert + \sum_{n=1}^\infty \, \left[\left\lvert \frac{a_{n}}{ \mu_{n}} \right\rvert + \left\lvert \frac{ a_{-n}}{ \mu_{n}} \right\rvert\right] 
= \sum_{k\in \mathbb{Z}} \, \left\lvert \frac{a_k}{\mu_{\lvert k \rvert}} \right\rvert
= \left \| \left\{\frac{a_k}{ \mu_{\lvert k \rvert}} \right\}_{k \in \mathbb{Z}} \right \|_{\ell^1(\mathbb{Z})},
\end{align*}
where $\{{\tilde{a}_n}\}_{n \in \mathbb{N}^*}$ is as defined in (\ref{eq:deftildea}). 
Moreover, $\tilde \mu_n \geq \mu_n^{(1)}$ readily implies
\begin{equation}\label{eq:lkjhgfcvbnbvcvc}
\left \| \left\{{\tilde a_n} \right\}_{n \in \mathbb{N}^*} \right\|_{1, \mu^{(1)}}
\geq \sum_{n=1}^\infty \, \left\lvert \frac{\tilde a_n}{\tilde \mu_n} \right\rvert
=\left \| \left\{\frac{a_k}{ \mu_{\lvert k \rvert}} \right\}_{k \in \mathbb{Z}} \right \|_{\ell^1(\mathbb{Z})}.
\end{equation}
The right-hand-side in (\ref{eq:lkjhgfcvbnbvcvc}) can be dealt with using the following lemma.

\begin{lemma}\label{lem:poiuhgfyyyyyyy3}
Let $M$ and $s$ be positive real numbers, and let $f$ be a $2\pi$-periodic function with Fourier series coefficients $\left\{{a_k}\right\}_{k \in \mathbb{Z}}$ satisfying
\begin{equation*}
\left \| \left\{\frac{a_k}{ \mu_{\lvert k \rvert}} \right\}_{k \in \mathbb{Z}} \right \|_{\ell^1(\mathbb{Z})}
\leq 1,
\end{equation*}
with the sequence $\left\{{\mu_n}\right\}_{n \in \mathbb{N}}$ as defined in (\ref{eq:poiuyhgddsdmsldnfbfjheeee}).
Then, $f \in \mathcal{A}_s(M)$.
\end{lemma}

\begin{proof}[Proof.]
See Appendix~\ref{sec:proofpoiuhgfyyyyyyy3}.
\end{proof}

\noindent
Combining Lemma \ref{lem:poiuhgfyyyyyyy3} with (\ref{eq:lkjhgfcvbnbvcvc}), it follows that 
\begin{equation}\label{eq:aqsdxwxcvvbdssszpnfn0}
\mathcal{E}_{1}\left(\left\{\mu_{n}^{(1)}\right\}_{n \in\mathbb{N}^*}\right)
\subseteq \iota(\mathcal{A}_s(M)).
\end{equation}

Next, upon observing that $\sqrt{2} \, \tilde \mu_n \leq \mu_n^{(2)}$, for all $n \in \mathbb{N}^*$, we readily obtain
\begin{equation}\label{eq:lkjhgfcvbnbvcvc2}
\left \| \left\{{\tilde a_n} \right\}_{n \in \mathbb{N}^*} \right\|_{2, \mu^{(2)}}
\leq \left(\sum_{n=1}^\infty \, \left\lvert \frac{\tilde a_n}{\sqrt{2} \, \tilde \mu_n} \right\rvert^2\right)^{1/2}
=\left \| \left\{\frac{a_k}{\sqrt{2} \, \mu_{\lvert k \rvert}} \right\}_{k \in \mathbb{Z}} \right \|_{\ell^2(\mathbb{Z})}.
\end{equation}
Next, we show that the right-most expression in (\ref{eq:lkjhgfcvbnbvcvc2}) can be upper-bounded by $1$.

\begin{lemma}\label{lem:poiuhgfyyyyyyy4}
Let $M$ and $s$ be positive real numbers, and consider $f \in \mathcal{A}_s(M) $ with Fourier series coefficients $\left\{{a_k}\right\}_{k \in \mathbb{Z}}$.
Then, we have
\begin{equation*}
\left \| \left\{\frac{a_k}{\sqrt{2} \, \mu_{\lvert k \rvert}} \right\}_{k \in \mathbb{Z}} \right \|_{\ell^2(\mathbb{Z})}
\leq 1,
\end{equation*}
where the sequence $\left\{{\mu_n}\right\}_{n \in \mathbb{N}}$ has been defined in (\ref{eq:poiuyhgddsdmsldnfbfjheeee}).
\end{lemma}

\begin{proof}[Proof.]
See Appendix~\ref{sec:proofpoiuhgfyyyyyyy4}.
\end{proof}

Using Lemma \ref{lem:poiuhgfyyyyyyy4} together with (\ref{eq:lkjhgfcvbnbvcvc2}), we can conclude that
\begin{equation}\label{eq:aqsdxwxcvvbdssszpnfn}
\iota(\mathcal{A}_s(M)) \subseteq \mathcal{E}_{2}\left(\left\{\mu_{n}^{(2)}\right\}_{n \in\mathbb{N}^*}\right).
\end{equation}
The proof is finalized by combining (\ref{eq:aqsdxwxcvvbdssszpnfn0}) and (\ref{eq:aqsdxwxcvvbdssszpnfn}).

\subsubsection{Proof of Lemma \ref{lem:Comparison with ellipsoidbddiskup}}\label{sec:proofComparison with ellipsoidbddiskup}

Let $f$ be analytic on $D(0;r')$ with Taylor series coefficients $\left\{a_{k} \right\}_{k \in \mathbb{N}}$, and set $\tilde a_k \coloneqq a_k r^k$, for all $k\in\mathbb{N}$. 
First, note that
\begin{equation}\label{eq:poihgfddbvbjqw}
d_{r'} (f, 0)
= \sup_{z \in D(0;r')} \left\lvert f(z) \right\rvert.
\end{equation}
Application of Lemma \ref{lem: Bounds on analytic norm} yields
\begin{align*}
M \left\|\left\{\frac{\tilde a_{k}}{\mu_k} \right\}_{k \in \mathbb{N}}\right\|_{\ell^2}
= \left\|\left\{a_{k} r'^{k}\right\}_{k \in \mathbb{N}}\right\|_{\ell^2}
&\leq d_{r'} (f, 0) \\
&\leq \left\|\left\{a_{k} r'^{k}\right\}_{k \in \mathbb{N}}\right\|_{\ell^1}
= M \left\|\left\{\frac{\tilde a_{k}}{\mu_k} \right\}_{k \in \mathbb{N}}\right\|_{\ell^1}.
\end{align*}
For $f \in \mathcal{A}(r' ; M)$, we hence get
\begin{equation*}
\left\|\left\{\frac{\tilde a_{k}}{\mu_k} \right\}_{k \in \mathbb{N}}\right\|_{\ell^2}
\leq \frac{d_{r'} (f, 0)}{M} 
\leq 1,
\end{equation*}
which, in turn, implies the inclusion relation
\begin{equation}\label{eq:thisit1ll}
\iota \left(\mathcal{A}(r' ; M) \right)
\subseteq \mathcal{E}_{2}(\{\mu_k\}_{k\in\mathbb{N}}).
\end{equation}

Conversely, assuming that $\{\tilde a_{k} \}_{k \in \mathbb{N}} \in \mathcal{E}_{1}(\{\mu_k\}_{k\in\mathbb{N}})$, we have
\begin{equation*}
d_{r'} (f, 0) 
\leq M \left\|\left\{\frac{\tilde a_{k}}{\mu_k} \right\}_{k \in \mathbb{N}}\right\|_{\ell^1}
\leq M,
\end{equation*}
which implies $f\in\mathcal{A}(r' ; M)$ and hence yields 
\begin{equation}\label{eq:thisit2ll}
\mathcal{E}_{1}(\{\mu_k\}_{k\in\mathbb{N}})
\subseteq \iota \left(\mathcal{A}(r' ; M) \right).
\end{equation}
Combining (\ref{eq:thisit1ll}) and (\ref{eq:thisit2ll}) establishes the desired result.

\subsubsection{Proof of Lemma \ref{lem: Asymptotic scaling of volume ratio}}\label{sec:proofAsymptotic scaling of volume ratio}

We start by noting that for $\mathbb{K}=\mathbb{R}$,
it follows from the usual volume formula for unit balls in Euclidean spaces (see e.g. \cite[Eq. (1)]{kempkaVolumesUnitBalls2017}) that 
\begin{equation}\label{eq: ratio volumes 1/d}
\left(V_{p,q}^{{\mathbb{R}}, d}\right)^{\frac{1}{{\sigma_{\mathbb{R}}(d)}}}
= \frac{\Gamma(1/p+1)}{\Gamma(1/q+1)}\left(\frac{\Gamma(d/q+1)}{\Gamma(d/p+1)}\right)^{\frac{1}{d}},
\end{equation}
where $\Gamma$ denotes the Euler gamma function.
By the Stirling formula, we have 
\begin{equation*}
\Gamma(d/q+1) = \sqrt{\frac{2\pi d}{q}}\left(\frac{d}{qe}\right)^{d/q}\left(1+O_{d\to\infty}(1/d)\right),
\end{equation*}
and 
\begin{equation*}
\Gamma(d/p+1) = \sqrt{\frac{2\pi d}{p}}\left(\frac{d}{pe}\right)^{d/p}\left(1+O_{d\to\infty}(1/d)\right).
\end{equation*}
Taking ratios yields
\begin{equation}\label{eq:kljhgfdqesrfetfgte}
\frac{\Gamma(d/q+1)}{\Gamma(d/p+1)}
= \frac{p^{d/p+1/2}}{q^{d/q+1/2}} \left(\frac{d}{e}\right)^{d(1/q-1/p)} \left(1+O_{d\to\infty}(1/d)\right).
\end{equation}
Inserting (\ref{eq:kljhgfdqesrfetfgte}) into (\ref{eq: ratio volumes 1/d}) gives
\begin{equation}\label{eq: ratio volumes 1/d3}
\left(V_{p,q}^{{\mathbb{R}}, d}\right)^{\frac{1}{{\sigma_{\mathbb{R}}(d)}}}
=  \frac{\Gamma(1/p+1) \, p^{1/p}}{\Gamma(1/q+1) \, q^{1/q} \, e^{(1/q-1/p)} } {d}^{(1/q-1/p)} \left(1+O_{d\to\infty}(1/d)\right).
\end{equation}
Taking the logarithm on both sides of (\ref{eq: ratio volumes 1/d3}) yields the desired result in the case $\mathbb{K}=\mathbb{R}$.
Likewise, for $\mathbb{K}=\mathbb{C}$, we use the corresponding volume formula (see e.g. \cite[Proposition~3.2.1]{edmundsFunctionSpacesEntropy1996}) to get 
\begin{equation}\label{eq: ratio volumes 1/d2}
\left(V_{p,q}^{{\mathbb{C}}, d}\right)^{\frac{1}{{\sigma_{\mathbb{C}}(d)}}}
= \left( \frac{\Gamma(2/p+1)}{\Gamma(2/q+1)}\right)^{\frac{1}{2}}  \left(\frac{\Gamma(2d/q+1)}{\Gamma(2d/p+1)}\right)^{\frac{1}{2d}} .
\end{equation}
Following the same steps as in the case $\mathbb{K}=\mathbb{R}$ then yields the desired result.

\subsubsection{Proof of Lemma \ref{lem:kljhugaqff}}\label{sec:proofkljhugaqff}

We first note that
\begin{align}
    \delta(d) 
    =& \, \frac{1}{d}\sum_{n=1}^{d} \left(\psi(d)- \psi(n) \right) \label{eq:asympbhdlt1}\\
    =& \, \frac{c}{d}\sum_{n=1}^{d} \left[d(\log (d) -c')- n(\log (n)-c') \right]\nonumber\\
    =&  \, c \, d\log (d) - \frac{c}{d}\sum_{n=1}^{d} n\log (n) -  \frac{c \, c' \, d}{2} + O_{d\to \infty}\left(1\right).\label{eq:asympbhdlt2}
\end{align}
Sum-integral comparisons now yield the lower bound 
\begin{align*}
\sum_{n=1}^{d} n\log (n ) 
\geq \int_{1}^d t \log (t) \, dt 
= & \, \frac{1}{2}\left[d^2\log(d) - \frac{d^2}{2\ln(2)} + \frac{1}{2\ln(2)}\right]\\
=& \, \frac{d^2\log(d)}{2} - \frac{d^2}{4\ln(2)} + O_{d\to \infty}\left(1\right)
\end{align*}
and the upper bound 
\begin{align*}
\sum_{n=1}^{d} n \log(n)  
\leq \int_{2}^{d+1} t \log (t) \, dt 
=& \,  \frac{1}{2}\left[(d+1)^2\log(d+1) - \frac{(d+1)^2}{2\ln(2)} - \frac{2}{\ln(2)} \right]\\
=& \, \frac{d^2\log(d)}{2}- \frac{d^2}{4\ln(2)} + O_{d\to \infty}\left(d \log(d) \right).
\end{align*}
Combining these bounds, we obtain
\begin{equation*}
\sum_{n=1}^{d} n \log(n) 
= \frac{d^2\log(d)}{2} - \frac{d^2}{4 \ln(2)} + O_{d\to \infty}\left(d\log(d)\right),
\end{equation*}
which, when inserted into (\ref{eq:asympbhdlt1})--(\ref{eq:asympbhdlt2})
yields the desired result
\begin{equation*}
    \delta(d)
    = \frac{cd\log(d)}{2} + \frac{c  \, d \, (\ln^{-1}(2)-2\, c')}{4} + O_{d\to \infty}\left(\log(d)\right).
\end{equation*}

\subsubsection{Proof of Lemma \ref{lem:kjhgvhjuhuhygatf}}\label{sec:proofkjhgvhjuhuhygatf}

Starting from the expression 
\begin{equation*}
\left(\psi\circ \psi^{-1}\right)(u) = u,
\quad \text{for all } u \in (0, \infty) \text{ and with } \psi(t) = ct(\log (t)-c'),
\end{equation*}
it follows that 
\begin{equation}\label{eq:kjhgvhjuhuhygatf}
u 
= c\, \psi^{(-1)}(u)\left(\log \left(\psi^{(-1)}(u)\right)-c'\right).
\end{equation}
Setting 
\begin{equation}\label{eq:kjhgvhjuhuhygatf2}
x \coloneqq \ln \left(\psi^{(-1)}(u)\right),
\end{equation}
we can rewrite (\ref{eq:kjhgvhjuhuhygatf}) according to
\begin{equation}\label{eq:equtobeinvlambt}
u
= c\, e^{x} \left(\frac{x}{\ln (2)}-c'\right), 
\quad \text{or equivalently,}
\quad x=
\frac{u \ln (2)}{c}e^{-x} + c'  \ln (2).
\end{equation}
Next, applying Lemma \ref{lem:equsolLambert} with 
\begin{equation*}
a= \frac{u \ln (2)}{c}
\quad \text{and}
\quad
b= c'  \ln (2),
\end{equation*}
the solution of (\ref{eq:equtobeinvlambt}) can be expressed as
\begin{equation*}
x
= c'  \ln (2) + W\left( \frac{u \ln (2)}{c}e^{-c'  \ln (2) }\right),
\end{equation*}
which, when combined with the definition (\ref{eq:kjhgvhjuhuhygatf2}), yields the desired result 
\begin{equation*}
\psi^{(-1)}(u)
= \exp\left\{c'  \ln (2) + W\left( \frac{u \ln (2)}{c}e^{-c'  \ln (2) }\right)\right\}.
\end{equation*}

\subsubsection{Proof of Lemma \ref{lem:lkjodihuzehchehjz2}}\label{sec:prooflkjodihuzehchehjz2}

We start with a result on the asymptotic behavior of $\psi^{(-1)}$.

\begin{lemma}\label{lem:lkjodihuzehchehjz}
Let $c,c' > 0$. The inverse of  $\psi \colon t \mapsto ct(\log (t)-c')$ satisfies
\begin{equation*}
\psi^{(-1)}(u)
=\frac{u}{c\log\left(u \right)}+ \frac{u\log^{(2)}(u)}{c\log^2 (u)} + o_{u\to \infty } \left(\frac{u\log^{(2)}(u)}{\log^2 (u)}\right).
\end{equation*}
\end{lemma}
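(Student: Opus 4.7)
The plan is to start from the closed-form expression for $\psi^{(-1)}$ provided by Lemma \ref{lem:kjhgvhjuhuhygatf}, apply the asymptotic expansion of the Lambert $W$-function from Lemma \ref{lem:lbtseriesexp}, and convert natural logarithms into base-$2$ logarithms to match the target form.

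First, I would introduce the auxiliary variable $x \coloneqq \frac{u\ln(2)}{c}e^{-c'\ln(2)} = \frac{u\ln(2)}{c\,2^{c'}}$, so that Lemma \ref{lem:kjhgvhjuhuhygatf} yields
\begin{equation*}
\psi^{(-1)}(u) \;=\; 2^{c'}\exp\!\left\{W(x)\right\}.
\end{equation*}
Since $x$ differs from $u$ only by a multiplicative constant, a short computation gives $\ln(x)=\ln(u)+\kappa$ with $\kappa \coloneqq \ln(\ln(2)/(c\,2^{c'}))$, and $\ln^{(2)}(x)=\ln^{(2)}(u)+O_{u\to\infty}(1/\ln(u))$. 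Feeding these into the asymptotic expansion (\ref{eq:lbtseriesexp}) from Lemma \ref{lem:lbtseriesexp} yields
\begin{equation*}
W(x) \;=\; \ln(u) + \kappa - \ln^{(2)}(u) + \frac{\ln^{(2)}(u)}{\ln(u)}\bigl(1+o_{u\to\infty}(1)\bigr),
\end{equation*}
after absorbing the lower-order $O(1/\ln(u))$ correction into the $o(1)$ factor.

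Next, I would exponentiate. Using $2^{c'}e^{\kappa}=\ln(2)/c$ and $1/\ln(u)=1/(\log(u)\ln(2))$, the dominant factor simplifies to
\begin{equation*}
2^{c'}\exp\!\left\{\ln(u)+\kappa-\ln^{(2)}(u)\right\} \;=\; \frac{u}{c\log(u)}.
\end{equation*}
Since the remaining exponent $\frac{\ln^{(2)}(u)}{\ln(u)}(1+o(1))$ tends to zero, I would apply $e^{y}=1+y+O(y^2)$ for $y\to 0$ to obtain
\begin{equation*}
\psi^{(-1)}(u) \;=\; \frac{u}{c\log(u)}\left(1+\frac{\ln^{(2)}(u)}{\ln(u)}\bigl(1+o_{u\to\infty}(1)\bigr)\right).
\end{equation*}
Finally, converting natural to base-$2$ logarithms via $\ln^{(2)}(u)/\ln(u)=\log^{(2)}(u)/\log(u)+O(1/\log(u))$ and noticing that the resulting remainder $O(u/\log^{2}(u))$ is absorbed into $o\bigl(u\log^{(2)}(u)/\log^{2}(u)\bigr)$, the desired expansion
\begin{equation*}
\psi^{(-1)}(u) \;=\; \frac{u}{c\log(u)}+\frac{u\log^{(2)}(u)}{c\log^{2}(u)}+o_{u\to\infty}\!\left(\frac{u\log^{(2)}(u)}{\log^{2}(u)}\right)
\end{equation*}
follows.

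The only slightly delicate point is bookkeeping of the error terms: the $o(1)$ multiplying $\ln^{(2)}(u)/\ln(u)$ must survive exponentiation and the base change without degrading the stated second-order accuracy. Since the relative error tends to zero and the additive $O(u/\log^{2}(u))$ contribution is strictly smaller than the $u\log^{(2)}(u)/\log^{2}(u)$ reference scale (because $\log^{(2)}(u)\to\infty$), this bookkeeping poses no real obstacle, and the proof essentially reduces to a careful concatenation of Lemmata \ref{lem:kjhgvhjuhuhygatf} and \ref{lem:lbtseriesexp}.
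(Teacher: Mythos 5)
Your proposal is correct and follows essentially the same route as the paper: start from the closed-form expression for $\psi^{(-1)}$ in Lemma \ref{lem:kjhgvhjuhuhygatf}, apply the Lambert $W$-function expansion from Lemma \ref{lem:lbtseriesexp}, and simplify. The only cosmetic difference is that the paper keeps all manipulations inside logarithms and exponentiates once at the end, whereas you exponentiate earlier and use $e^y = 1+y+O(y^2)$ on the small remainder; both bookkeeping choices are equivalent.
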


\begin{proof}[Proof.]
See Appendix~\ref{sec:prooflkjodihuzehchehjz}.
\end{proof}

\noindent
We note that by Lemma \ref{lem:lkjodihuzehchehjz}
\begin{align}
\psi^{(-1)}(u)^2 \label{eq:nbvnsapldjrijifrjv1}
&=\left[\frac{u}{c\log (u)} \right]^2\left[1 + \frac{\log^{(2)}(u)}{\log(u)} + o_{u\to \infty}\left(\frac{\log^{(2)}(u)}{\log(u)}\right)\right]^2 \\
&=\left[\frac{u}{c\log (u)} \right]^2
\left[1 + 2 \, \frac{\log^{(2)}(u)}{\log(u)} + o_{u\to \infty}\left(\frac{\log^{(2)}(u)}{\log(u)}\right)\right],\label{eq:nbvnsapldjrijifrjv2}
\end{align}
as well as
\begin{align}
\log\left(\psi^{(-1)}(u)\right)
&=\log \left[\frac{u}{c\log (u)} \right]+\log \left[1 + \frac{\log^{(2)}(u)}{\log(u)} + o_{u\to \infty}\left(\frac{\log^{(2)}(u)}{\log(u)}\right)\right]\label{eq:nbvnsapldjrijifrjv3}\\
&=\log (u) - \log^{(2)}(u) - \log(c)
+ \frac{\log^{(2)}(u)}{\log(u)} + o_{u\to \infty}\left(\frac{\log^{(2)}(u)}{\log(u)}\right)\nonumber \\
&=\log (u) \left[1-\frac{\log^{(2)}(u)}{\log(u)} + o_{u\to \infty}\left(\frac{\log^{(2)}(u)}{\log(u)}\right)\right].\label{eq:nbvnsapldjrijifrjv4}
\end{align}
We further observe that 
\begin{align}
&\left[1 + 2 \, \frac{\log^{(2)}(u)}{\log(u)} + o_{u\to \infty}\left(\frac{\log^{(2)}(u)}{\log(u)}\right)\right]
\left[1-\frac{\log^{(2)}(u)}{\log(u)} + o_{u\to \infty}\left(\frac{\log^{(2)}(u)}{\log(u)}\right)\right]\label{eq:nbvnsapldjrijifrjv5}\\
& = 1 +  \frac{\log^{(2)}(u)}{\log(u)} + o_{u\to \infty}\left(\frac{\log^{(2)}(u)}{\log(u)}\right).\label{eq:nbvnsapldjrijifrjv6}
\end{align}
Combining (\ref{eq:nbvnsapldjrijifrjv1})--(\ref{eq:nbvnsapldjrijifrjv2}), (\ref{eq:nbvnsapldjrijifrjv3})--(\ref{eq:nbvnsapldjrijifrjv4}), and (\ref{eq:nbvnsapldjrijifrjv5})--(\ref{eq:nbvnsapldjrijifrjv6}), we obtain
\begin{align*}
\psi^{(-1)}(u)^2 \log\left(\psi^{(-1)}(u)\right)
=& \left[\frac{u}{c\log (u)} \right]^2
\log (u)
\left[1 +  \frac{\log^{(2)}(u)}{\log(u)} + o_{u\to \infty}\left(\frac{\log^{(2)}(u)}{\log(u)}\right)\right] \\
=& \frac{u^2}{c^2\log (u)} 
\left[1 +  \frac{\log^{(2)}(u)}{\log(u)} + o_{u\to \infty}\left(\frac{\log^{(2)}(u)}{\log(u)}\right)\right],
\end{align*}
which finalizes the proof.

\subsubsection{Proof of Lemma \ref{lem:firstpartinclusionexpfct}}\label{sec:prooffirstpartinclusionexpfct}

Given $f \in \mathcal{F}_{\text{exp}}^{A,C}$ with Taylor series coefficients $\{a_k\}_{k\in\mathbb{N}}$, we need to show that 
\begin{equation*}
\begin{cases}
    \left\lvert a_0\right\rvert
    \leq  C, \\
    \left\lvert a_k\right\rvert
    \leq C \exp \left\{-k \left(\ln (k) - 1 - \ln (A) \right)\right\}, \quad \text{for }k\geq 1.
\end{cases}
\end{equation*}
From (\ref{eq: popoloihj}) it follows that  
\begin{equation}\label{eq: ufheuivf2}
    \sup_{z \in D(0;R)} \lvert f(z) \rvert \leq C e^{AR}, 
    \quad \forall R>0.
\end{equation}

\begin{lemma}[Cauchy's estimate] \cite[Theorem 10.26]{rudinRealComplexAnalysis1987}\label{lem:cauchyestimate}
Let  $r$ and $M$ be positive real numbers and let
$f$ be analytic on $D(0 ; r)$ with Taylor series coefficients $\{a_k\}_{k \in \mathbb{N}}$.
If $\lvert f( z) \rvert \leq M$, for all $z \in D(0 ;   r)$, then
\begin{equation}\label{eq: bound an coefs}
  \lvert a_{k} \rvert  \leq \frac{M}{r^{k}},  
  \quad \text{for all } \, k \in \mathbb{N}.
\end{equation}
\end{lemma}

\noindent
Applying Cauchy's estimate (\ref{eq: bound an coefs}) together with (\ref{eq: ufheuivf2}), we get
\begin{equation*}
    \lvert a_k\rvert  \leq \frac{C e^{AR}}{R^k}, 
    \quad \forall R>0.
\end{equation*}
As this bound holds for all values of $R>0$, it holds in particular for the value minimizing the upper bound, or, more specifically, we have
\begin{equation}\label{eq:fksdjoiehfdjvwwzoporirieieze}
    \left\lvert a_k\right\rvert
    \leq \inf_{R > 0} C \exp\left\{AR - k\ln (R)\right\}.
\end{equation}
For $k=0$, we readily get 
\begin{equation*}
\left\lvert a_0\right\rvert
    \leq  C,
\end{equation*}
as desired.
It remains to consider the case $k\geq 1$.
A straightforward calculation reveals that the infimum in (\ref{eq:fksdjoiehfdjvwwzoporirieieze}) is attained at
\begin{equation*}
    R_k
    \coloneqq \frac{k}{A}>0,
\end{equation*}
which, upon insertion into (\ref{eq:fksdjoiehfdjvwwzoporirieieze}), yields
\begin{align}
    \left\lvert a_k\right\rvert
    &\leq C \exp\left\{A R_k - k\ln (R_k)\right\} \\
    &= C \exp \left\{-k \left(\ln( k) - 1 - \ln (A) \right)\right\}.
\end{align}
This concludes the proof.

\subsubsection{Proof of Lemma \ref{lem:secdpartinclusionexpfct}}\label{sec:proofsecdpartinclusionexpfct}

For the proof of Lemma \ref{lem:secdpartinclusionexpfct}, we shall need the following auxiliary result.
\begin{lemma}\label{lem:studyvariationscompfctbis}
It holds that
\begin{equation*}
1
\leq \sup_{k \in \mathbb{N}^*} \frac{\sqrt{2 \pi k}\, e^{\frac{1}{12k}}}{2^k}   < \infty.
\end{equation*}
\end{lemma}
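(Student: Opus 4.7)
The plan is to show that the sequence $a_k := \sqrt{2\pi k}\, e^{1/(12k)} / 2^k$ is strictly decreasing on $\mathbb{N}^*$, so that its supremum is attained at $k=1$, and then to estimate $a_1$ directly. Both the finiteness and the lower bound on the supremum will follow painlessly from this reduction to a single value.

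First I would compute the ratio
\[
\frac{a_{k+1}}{a_k} \;=\; \frac{1}{2}\sqrt{1+\tfrac{1}{k}}\,\exp\!\left\{-\frac{1}{12\,k\,(k+1)}\right\}.
\]
For every $k \in \mathbb{N}^*$, one has $\sqrt{1+1/k} \leq \sqrt{2}$ and $\exp\{-1/(12k(k+1))\} < 1$, so $a_{k+1}/a_k < \sqrt{2}/2 < 1$. This establishes that $\{a_k\}_{k\in\mathbb{N}^*}$ is strictly decreasing, whence
\[
\sup_{k\in\mathbb{N}^*} a_k \;=\; a_1 \;=\; \frac{\sqrt{2\pi}\, e^{1/12}}{2},
\]
which is a finite positive real number, yielding the upper bound in the statement.

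For the lower bound, I would use the elementary estimate $\pi > 2$, which gives $\sqrt{2\pi} > 2$, and combine it with $e^{1/12} > 1$ to deduce $a_1 > 1$. No step here constitutes a genuine obstacle; the only care required is in verifying the monotonicity via the explicit ratio, after which both inequalities follow by inspection of $a_1$.
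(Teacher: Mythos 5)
Your proof is correct, and it takes a genuinely different route from the paper's. The paper treats the expression as a continuous function $g(t)=\sqrt{2\pi t}\,e^{1/(12t)}/2^t$ on $[1,\infty)$, observes that $g$ is continuous with $\lim_{t\to\infty}g(t)=0$, and concludes by a soft compactness-type argument that $g$ is bounded on $[1,\infty)$, hence so is $\sup_{k\in\mathbb{N}^*}g(k)$; the lower bound is then obtained by simply evaluating $g(1)=\sqrt{\pi/2}\,e^{1/12}\geq 1$. You instead work directly with the discrete sequence, compute the ratio
\[
\frac{a_{k+1}}{a_k}=\frac12\sqrt{1+\tfrac1k}\,\exp\!\left\{-\frac{1}{12k(k+1)}\right\}\leq \frac{1}{\sqrt{2}}<1,
\]
and conclude that $\{a_k\}$ is strictly decreasing, so the supremum is exactly $a_1=\sqrt{2\pi}\,e^{1/12}/2$. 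Your argument is slightly longer but more quantitative: it identifies the exact value of the supremum rather than merely establishing finiteness, which the paper's softer approach does not do. Both methods are sound; the paper's is quicker and requires no computation beyond evaluating $g(1)$, while yours gives the sharper structural information that the sequence is monotone with maximum at $k=1$. The ratio computation and the numerical estimates ($\sqrt{2\pi}>2$, $e^{1/12}>1$) in your argument are all correct.
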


\begin{proof}[Proof.]
The function
\begin{equation*}
     g\colon t 
     \mapsto \frac{\sqrt{2 \pi t}\, e^{\frac{1}{12t}}}{2^t}  
\end{equation*}
is continuous on $[1, \infty)$ and satisfies
\begin{equation*}
\lim_{t \to \infty} g(t)=0.
\end{equation*}
Therefore, $g$ is bounded on $[1, \infty)$, i.e., 
\begin{equation*}
\sup_{t\in [1, \infty)} g(t) < \infty.
\end{equation*}
This obviously implies 
\begin{equation*}
\sup_{k \in \mathbb{N}^*} g(k) < \infty.
\end{equation*}
We finally observe that 
\begin{equation*}
\sup_{k \in \mathbb{N}^*} g(k)
\geq g(1) 
= \sqrt{ \frac{\pi}{2} }\, e^{\frac{1}{12}}
\geq 1, 
\end{equation*}
which concludes the proof.
\end{proof}

\noindent
We first note that $\tilde C$ is well-defined owing to Lemma \ref{lem:studyvariationscompfctbis}.
Next, consider a sequence $\{a_k\}_{k\in\mathbb{N}}$ satisfying
\begin{equation}\label{eq: epapapafbgfur}
\begin{cases}
    \left\lvert a_0\right\rvert
    \leq  {\tilde C},\\
    \left\lvert a_k\right\rvert
    \leq {\tilde C} \exp \left\{-k \left(\ln (k) - 1 - \ln \left({\tilde A}\right) \right)\right\},  \quad \text{for }k\geq 1.
\end{cases}
\end{equation}
Under the definition
\begin{equation*}
f(z)
= \sum_{k=0}^\infty a_k z^k,
\end{equation*}
we need to show that
\begin{equation*}
    \lvert f(z) \rvert \leq C e^{A\lvert z \rvert},
    \quad \text{for all } z\in \mathbb{C}.
\end{equation*}
To this end, fix $z\in \mathbb{C}$ and start with the following chain of inequalities
\begin{align}
\left\lvert f(z)\right\rvert
    = \left\lvert\sum_{k=0}^\infty a_k z^k \right\rvert 
    &\leq \sum_{k=0}^\infty \lvert a_k \rvert \lvert z \rvert^k  \label{eq: mloivbkimpp1} \\
    & \stackrel{(\ref{eq: epapapafbgfur})}{\leq}{\tilde C}+\sum_{k=1}^\infty {\tilde C} \exp \left\{-k \left(\ln (k) - 1 - \ln \left({\tilde A}\right) \right)\right\} \lvert z \rvert^k \nonumber\\
    & = {\tilde C}  \left(1+\sum_{k=1}^\infty \frac{e^k {\tilde A}^k\lvert z \rvert^k}{k^k}\right). \label{eq: mloivbkimpp3}
\end{align}
Applying Stirling's inequality, we obtain
\begin{align}\label{eq:stirling}
    \frac{e^k {\tilde A}^k\lvert z \rvert^k}{k^k}
    & \leq \frac{A^k\lvert z \rvert^k}{k!}\frac{\sqrt{2 \pi k}\, e^{\frac{1}{12k}}}{2^k},
    \quad \text{for all } k\geq 1.
\end{align}
Using (\ref{eq:stirling}) in (\ref{eq: mloivbkimpp1})--(\ref{eq: mloivbkimpp3}), we get
\begin{align*}
    \left\lvert f(z)\right\rvert
    &\leq {\tilde C}  \left(1+\sum_{k=1}^\infty \frac{A^k\lvert z \rvert^k}{k!}\frac{\sqrt{2 \pi k}\, e^{\frac{1}{12k}}}{2^k} \right) \\
    &\leq {\tilde C}  \left(1+ \sup_{k \in \mathbb{N}^*} \frac{\sqrt{2 \pi {k}}\, e^{\frac{1}{12{k}}}}{2^{k}} \sum_{k=1}^\infty \frac{A^k\lvert z \rvert^k}{k!} \right) \\
    &\leq {\tilde C} \left[\sup_{k \in \mathbb{N}^*} \frac{\sqrt{2 \pi {k}}\, e^{\frac{1}{12{k}}}}{2^{k}}\right]  \sum_{k=0}^\infty \frac{A^k\lvert z \rvert^k}{k!} 
    = C e^{A\lvert z \rvert},
\end{align*}
where the last inequality relies on the property
\begin{equation*}
\sup_{k \in \mathbb{N}^*} \frac{\sqrt{2 \pi {k}}\, e^{\frac{1}{12{k}}}}{2^{k}} 
\geq 1
\end{equation*}
established in Lemma \ref{lem:studyvariationscompfctbis}.
This concludes the proof.

\subsubsection{Proof of Lemma \ref{lem:lemboundtriangleineqlike}}\label{sec:prooflemboundtriangleineqlike}

Let $\{x^1, \dots, x^N\}$ be a $\rho$-covering of $\mathcal{E}_p^d$ with respect to the $\|\cdot\|_q$-norm.
We introduce the vector obtained by completing the coordinates of $x^i$ with infinitely many zeros as
\begin{equation*}
\bar x^i
\coloneqq (x^i_1, \dots, x^i_d, 0, \dots), \quad \text{for all } i \in \{1, \dots, N\}.
\end{equation*}
The proof is then effected by showing that $\{\bar x^1, \dots, \bar x^N\}$ is a $\bar \rho$-covering of $\mathcal{E}_p$.
To this end, take $\bar x\in \mathcal{E}_p$ and define the vector
\begin{equation*}
x
\coloneqq (\bar x_1, \dots, \bar x_d)
\end{equation*}
obtained by retaining the first $d$ components of $\bar x$.
By definition, $x \in \mathcal{E}_p^d$. As $\{x^1, \dots, x^N\}$ is a $\rho$-covering of $\mathcal{E}_p^d$ in the $\|\cdot\|_q$-norm, there exists an index $j\in \{1, \dots, N\}$ such that 
\begin{equation}\label{eqlkiudizerufhhjhjs0}
\left\|x-x^j\right\|_q\leq \rho.
\end{equation}
Now, consider the case $q \in [1, \infty)$, and observe that 
\begin{align}\label{eqlkiudizerufhhjhjs}
\left\|\bar x- \bar x^j\right\|_q^q
= \sum_{n=1}^d \left\lvert\bar x_n - \bar x_n^j \right\rvert^q +\sum_{n=d+1}^\infty \left\lvert\bar x_n\right\rvert^q.
\end{align}
The first term on the right-hand-side of (\ref{eqlkiudizerufhhjhjs}) equals $\|x-x^j\|_q^q$ and can hence be upper-bounded by $\rho^q$ owing to (\ref{eqlkiudizerufhhjhjs0}).
We next observe that, thanks to $\bar x \in \mathcal{E}_p$, $\lvert\bar x_n \rvert \leq \mu_n$, for $n \in \mathbb{N}^*$.
This immediately implies 
\begin{equation}\label{eq:firstboundonresidualsum}
\sum_{n=d+1}^\infty \left\lvert\bar x_n\right\rvert^q
\leq \sum_{n=d+1}^\infty \mu_n^q
= \mu_{d+1}^q \sum_{n=d+1}^\infty \left(\frac{\mu_n}{\mu_{d+1}}\right)^q.
\end{equation}
We will need to further upper-bound the right-hand-side in (\ref{eq:firstboundonresidualsum}), which will be effected through the following result.

\begin{lemma}\label{lem:decayrelsemiaxesratiol}
Let $\psi$ be a decay rate function.
Then, for every sequence $\{\mu_n\}_{n\in \mathbb{N}^*}$ of $\psi$-exponentially decaying semi-axes, there exist a positive real number $c$ and an $n^*\in \mathbb{N}^*$ such that, for all pairs of integers $n$ and $m$ satisfying $n\geq m \geq n^*$, it holds that
\begin{equation*}
\frac{\mu_n}{\mu_{m}} 
\leq 2^{c(m-n)}.
\end{equation*}
\end{lemma}

\begin{proof}[Proof.]
See Appendix~\ref{sec:proofdecayrelsemiaxesratiol}.
\end{proof}

\noindent
With $n^*$ according to Lemma \ref{lem:decayrelsemiaxesratiol}, setting $d^*\coloneqq n^*-1$, it follows from Lemma \ref{lem:decayrelsemiaxesratiol} that
\begin{equation}\label{eq:secondboundonresidualsum}
\sum_{n=d+1}^\infty \left(\frac{\mu_n}{\mu_{d+1}}\right)^q
\leq \sum_{n=d+1}^\infty  2^{cq(d+1-n)}
= \sum_{n=0}^\infty 2^{-cqn}
= \frac{2^{cq}}{2^{cq}-1},
\end{equation}
for all $d\geq d^*$.
Upon defining the constant 
\begin{equation*}
K \coloneqq \frac{2^{cq}}{2^{cq}-1}
\geq 1,
\end{equation*}
we get, from (\ref{eq:firstboundonresidualsum}) and (\ref{eq:secondboundonresidualsum}), the bound 
\begin{equation*}
\sum_{n=d+1}^\infty \left\lvert\bar x_n\right\rvert^q
\leq K \mu_{d+1}^q.
\end{equation*}
Consequently, (\ref{eqlkiudizerufhhjhjs}) becomes 
\begin{align*}
\left\|\bar x- \bar x^j\right\|_q^q
\leq \rho^q + K \mu_{d+1}^q 
=\bar \rho^q, 
\end{align*}
which shows that $\{\bar x^1, \dots, \bar x^N\}$ is a $\bar \rho$-covering of $\mathcal{E}_p$.
Finally noting that $K$ does not depend on $d$ as required, the result is established for $q\in [1, \infty)$.
For $q=\infty$, we have 
\begin{align}\label{eqlkiudizerufhhjhjsinf}
\left\|\bar x- \bar x^j\right\|_q
= \sup_{n\in \mathbb{N}^*} \left\lvert\bar x_n - \bar x_n^i\right\rvert
= \max\left\{\max_{n=1,\dots, d} \left\lvert\bar x_n - \bar x_n^i\right\rvert, \sup_{n\geq d+1}\left\lvert\bar x_n\right\rvert\right\}.
\end{align}
As $\max_{n=1,\dots, d} \left\lvert\bar x_n - \bar x_n^i\right\rvert \leq \rho$ thanks to (\ref{eqlkiudizerufhhjhjs0}) and $\sup_{n\geq d+1}\left\lvert\bar x_n\right\rvert \leq \mu_{d+1}$, it follows that $\{\bar x^1, \dots, \bar x^N\}$ is a $\max\left\{\rho, \mu_{d+1}\right\}$-covering of $\mathcal{E}_p$.

\subsubsection{Proof of Lemma \ref{lem:poiuhgfyyyyyyy3}}\label{sec:proofpoiuhgfyyyyyyy3}

Consider the analytic extension of $f$ to $\mathcal{S}$ (recall Definition~\ref{def:perfctonstrip}), i.e.,
\begin{equation*}
g \colon z \in \mathcal{S} \mapsto \sum_{k=-\infty}^{\infty} a_k e^{ikz},
\end{equation*}
and write $z=x+iy$, with $\lvert y \rvert < s$.
Then, it follows from 
\begin{equation*}
\sum_{k=-\infty}^{\infty} \left \lvert  a_k e^{ik(x+iy)}\right \rvert
\leq \sum_{k=-\infty}^{\infty} \left \lvert  a_k \right \rvert e^{s \lvert k\rvert}
= M \left \| \left\{\frac{a_k}{ \mu_{\lvert k \rvert}} \right\}_{k \in \mathbb{Z}} \right \|_{\ell^1(\mathbb{Z})}
\leq M,
\end{equation*}
that the infinite series defining $g$ is absolutely and uniformly convergent on $\mathcal{S}$, which implies analyticity of $g$ on $\mathcal{S}$.
In particular, as $g=f$ on $\mathbb{R}\subseteq \mathcal{S}$, we have that $g$ is the analytic continuation of $f$ to $\mathcal{S}$.
It remains to prove that $\sup_{z \in \mathcal{S}} \lvert g(z)\rvert\leq M$.
To this end, fix $s'\in \mathbb{R}$ such that $\lvert s' \rvert < s$ and apply Lemma \ref{lem:poiuhgfyyyyyyy2} which yields
\begin{align}
\lvert g(x+i s') \rvert
= \left \lvert \sum_{k=-\infty}^{\infty} a_k e^{ikx - ks'} \right \rvert
&\leq \sum_{k=-\infty}^{\infty} \left \lvert  a_k \right \rvert e^{s \lvert k\rvert} \label{eq:kjuhgfqqqed} \\
&= M \left \| \left\{\frac{a_k}{ \mu_{\lvert k \rvert}} \right\}_{k \in \mathbb{Z}} \right \|_{\ell^1(\mathbb{Z})}
\leq M.\label{eq:kjuhgfqqqed2}
\end{align}
Taking the supremum in (\ref{eq:kjuhgfqqqed})--(\ref{eq:kjuhgfqqqed2}), according to
\begin{equation*}
\sup_{z \in \mathcal{S}} \,  \lvert g(z) \rvert
=\sup_{x \in \mathbb{R}} \sup_{\lvert s' \rvert<s}  \lvert f(x+i s') \rvert
\leq M,
\end{equation*}
concludes the proof.

\subsubsection{Proof of Lemma \ref{lem:poiuhgfyyyyyyy4}}\label{sec:proofpoiuhgfyyyyyyy4}

Fix $s'\in (0 ; s)$ and define
\begin{equation*}
\mu_n' \coloneqq M e^{-s' n},
\quad \text{for all } n \in \mathbb{N}.
\end{equation*}
We start with the observation that 
\begin{align}
M^2 \left \| \left\{\frac{a_k}{\mu'_{\lvert k \rvert}} \right\}_{k \in \mathbb{Z}} \right \|_{\ell^2(\mathbb{Z})}^2
&= \sum_{k=-\infty}^{\infty} \left \lvert  a_k \, e^{\lvert k\rvert s'} \right \rvert ^2 \label{eqdddfgvvvchbs1} \\
&\leq \sum_{k=-\infty}^{\infty} \left \lvert  a_k \, e^{-k s'} \right \rvert ^2 + \sum_{k=-\infty}^{\infty} \left \lvert  a_k \, e^{k s'} \right \rvert^2. \label{eqdddfgvvvchbs2}
\end{align}
Next, using Parseval's identity combined with Lemma \ref{lem:poiuhgfyyyyyyy2}, we get
\begin{equation}\label{eq:boundonsumsprime}
\sum_{k=-\infty}^{\infty} \left \lvert  a_k \, e^{- k s'} \right \rvert ^2
= \frac{1}{2\pi}  \int_{0}^{2\pi} \left \lvert f(x+is')  \right \rvert ^2 \, dx 
\leq \sup_{z \in \mathcal{S}} \, \lvert f(z) \rvert^2 \leq M^2.
\end{equation}
Therefore, taking the limit $s' \to s$ in (145) shows that the non-decreasing 
sequence $\bigl\{ \sum_{k=-N}^{N} |a_k e^{-ks}|^2 \bigr\}_{N \in \mathbb{N}^*}$ 
is bounded by $M^2$.
By \cite[Theorem 3.14]{rudin1953principles}, it hence converges to a limit which is also bounded by $M^2$, i.e., we have 
\begin{equation}\label{eq:thisisniceresulthe1}
\sum_{k=-\infty}^{\infty} \left \lvert  a_k \, e^{- k s} \right \rvert ^2
\leq M^2.
\end{equation}
Likewise, one can show that 
\begin{equation}\label{eq:thisisniceresulthe2}
\sum_{k=-\infty}^{\infty} \left \lvert  a_k \, e^{k s} \right \rvert ^2
 \leq M^2.
\end{equation}
Using (\ref{eq:thisisniceresulthe1}) and (\ref{eq:thisisniceresulthe2}) in (\ref{eqdddfgvvvchbs1})--(\ref{eqdddfgvvvchbs2}) hence yields
\begin{equation*}
\left \| \left\{\frac{a_k}{\sqrt{2}\mu_{\lvert k \rvert}} \right\}_{k \in \mathbb{Z}} \right \|_{\ell^2(\mathbb{Z})}
\leq 1,
\end{equation*}
which concludes the proof.

\subsubsection{Proof of Lemma \ref{lem:lkjodihuzehchehjz}}\label{sec:prooflkjodihuzehchehjz}

We start by observing that
\begin{align}
& \ln\left(\frac{u \ln (2) \, e^{-c'  \ln (2) } }{c \ln\left(\frac{u \ln (2) \, e^{-c'  \ln (2) } }{c}\right)}\right) \nonumber \\  
& = -c'\ln (2) + \ln\left(\frac{u}{c \left[\log\left(u\right) + O_{u\to \infty}(1) \right]}\right) \label{eq:apdozdjhchczdhezjj1}\\
& = -c'\ln (2) + \ln\left(\frac{u}{c \log\left(u\right)}\left[1+  O_{u\to \infty}\left(\frac{1}{\log(u)}\right)\right]\right) \nonumber\\
& = -c'\ln (2) + \ln\left(\frac{u}{c \log\left(u\right) }\right) + O_{u\to \infty}\left(\frac{1}{\log(u)}\right).\label{eq:apdozdjhchczdhezjj2}
\end{align}
Furthermore, we have 
\begin{align}
\frac{\ln^{(2)}\left(\frac{u \ln (2) \, e^{-c'  \ln (2) } }{c}\right)}{\ln\left(\frac{u \ln (2) \, e^{-c'  \ln (2) } }{c}\right)}
& = \frac{\ln^{(2)}\left(u\right) + O_{u\to \infty}\left(\frac{1}{\ln(u)}\right) }{\ln\left(u\right) + O_{u\to \infty}(1)} \label{eq:apdozdjhchczdhezjj3}\\
& = \frac{\log^{(2)}\left(u\right)}{\log\left(u\right)} + o_{u\to \infty}\left(\frac{\log^{(2)}\left(u\right)}{\log\left(u\right)}\right). \label{eq:apdozdjhchczdhezjj4}
\end{align}
Combining (\ref{eq:apdozdjhchczdhezjj1})--(\ref{eq:apdozdjhchczdhezjj2}) and (\ref{eq:apdozdjhchczdhezjj3})--(\ref{eq:apdozdjhchczdhezjj4}) with (\ref{eq:lbtseriesexp}) in Lemma \ref{lem:lbtseriesexp}, we obtain
\begin{align}
&W\left( \frac{u \ln (2)}{c}e^{-c'  \ln (2) }\right)\label{eq:apdozdjhchczdhezjj5}\\
& = \ln\left(\frac{u \ln (2) \, e^{-c'  \ln (2) } }{c \ln\left(\frac{u \ln (2) \, e^{-c'  \ln (2) } }{c}\right)}\right)  
+ \frac{\ln^{(2)}\left(\frac{u \ln (2) \, e^{-c'  \ln (2) } }{c}\right)}{\ln\left(\frac{u \ln (2) \, e^{-c'  \ln (2) } }{c}\right)} \left(1 + o_{u\to \infty}(1)\right) \nonumber\\
& = -c'\ln (2) + \ln\left(\frac{u}{c \log\left(u\right) }\right) + \frac{\log^{(2)}\left(u\right)}{\log\left(u\right)} + o_{u\to \infty}\left(\frac{\log^{(2)}\left(u\right)}{\log\left(u\right)}\right) \nonumber\\
& = -c'\ln (2) + \ln\left(\frac{u}{c \log\left(u\right) }\right) 
+ \ln\left\{1 + \frac{\log^{(2)}\left(u\right)}{\log\left(u\right)} + o_{u\to \infty}\left(\frac{\log^{(2)}\left(u\right)}{\log\left(u\right)}\right)\right\} \nonumber \\
& = -c'\ln (2) + \ln\left\{\frac{u}{c \log\left(u\right)} + \frac{u\log^{(2)}\left(u\right)}{c\log^2\left(u\right)} + o_{u\to \infty}\left(\frac{u\log^{(2)}\left(u\right)}{\log^2\left(u\right)}\right)\right\}. \label{eq:apdozdjhchczdhezjj6}
\end{align}
Using (\ref{eq:apdozdjhchczdhezjj5})--(\ref{eq:apdozdjhchczdhezjj6}) in the expression for $\psi^{(-1)}$ according to Lemma \ref{lem:kjhgvhjuhuhygatf}, 
yields
\begin{align*}
\psi^{(-1)}(u)
=& \,  \exp\left\{c'  \ln (2) + W\left( \frac{u \ln (2)}{c}e^{-c'  \ln (2) }\right)\right\}\\
=& \, \frac{u}{c \log\left(u\right) } + \frac{u\log^{(2)}\left(u\right)}{c\log^2\left(u\right)} + o_{u\to \infty}\left(\frac{u\log^{(2)}\left(u\right)}{\log^2\left(u\right)}\right),
\end{align*}
thereby concluding the proof.

\subsubsection{Proof of Lemma \ref{lem:decayrelsemiaxesratiol}}\label{sec:proofdecayrelsemiaxesratiol}

Let $t^*$ be the parameter of $\psi$ and set
\begin{equation*}
n^* \coloneqq \lceil t^* \rceil +1.
\end{equation*}
Then, from (\ref{eq:assumptionspsidecayfct}), we get
\begin{equation*}
\frac{\psi(m)}{m}
\leq \frac{\psi(n)}{n},
\end{equation*}
which directly implies 
\begin{equation*}
\frac{\psi(m)- \psi(n)}{m}
\leq \psi(n)\left(\frac{1}{n}-\frac{1}{m}\right),
\end{equation*}
or, equivalently,
\begin{equation}\label{eq:joihugfghszxxxx}
\psi(n)-\psi(m) 
\geq m \, \psi(n)\left(\frac{1}{m}-\frac{1}{n}\right).
\end{equation}
Furthermore, we have 
\begin{equation*}
\frac{\psi(n)}{n} 
\geq \frac{\psi(n^*)}{n^*} 
\eqqcolon c > 0,
\end{equation*}
which, when used in (\ref{eq:joihugfghszxxxx}), yields
\begin{equation*}
\psi(n)-\psi(m) 
\geq c\, m \, n\left(\frac{1}{m}-\frac{1}{n}\right)
= c\, \left(n-m\right).
\end{equation*}
Finally, invoking (\ref{eq: that's some big boy exp}) results in 
\begin{equation*}
\frac{\mu_n}{\mu_{m}} 
= \exp\left\{-\ln(2)(\psi(n)-\psi(m))\right\} 
\leq 2^{c(m-n)},
\end{equation*}
which is the desired result.

\subsubsection{Statement and Proof of Lemma \ref{lem:firstpropeffdim}}\label{sec:prooffirstpropeffdim}

\begin{lemma}\label{lem:firstpropeffdim}
Let $p, q \in [1, \infty]$ and let  $\mathcal{E}_p$ be the infinite-dimensional ellipsoid with exponentially decaying semi-axes $\{\mu_n\}_{n \in \mathbb{N}^*}$.
Then, the effective dimension 
\begin{equation*}
d_{\varepsilon} = \min \left\{ k \in \mathbb{N}^* \mid   
 N \left(\varepsilon ; \mathcal{E}_p, \|\cdot\|_q \right)^{\frac{1}{{\sigma_{\mathbb{K}}(k)}}} \, \mu_k
\leq  \kplus \,  {\sigma_{\mathbb{K}}(k)}^{(\frac{1}{q}-\frac{1}{p})} \, \bar \mu_k  \right\},
\end{equation*}
with $\bar \mu_k$ denoting the geometric mean of the first $k$ semi-axes, for all $k\in \mathbb{N}^*$, is well-defined for all $\varepsilon>0$, and satisfies  $\lim_{\varepsilon \to 0} d_\varepsilon=\infty$. Here, $\kplus$ is the constant defined in Theorem~\ref{thm: metric entropy of finite ellipsoids}.
\end{lemma}

\begin{proof}[Proof.]
    By Lemma~\ref{lem:lemboundtriangleineqlike}, for every $\varepsilon>0$ the covering number 
$N(\varepsilon;\mathcal{E}_p,\|\cdot\|_q)$ is bounded above by the covering number of a 
finite-dimensional truncation $\mathcal{E}_p^d$. Since $\mathcal{E}_p^d$ is compact in 
$\mathbb{R}^d$, its covering number is finite, and therefore 
$N(\varepsilon;\mathcal{E}_p,\|\cdot\|_q)<\infty$.
To prove that the effective dimension is well-defined, it hence suffices to show that, for all $\varepsilon>0$, there exists a $k\in \mathbb{N}^*$ such that 
\begin{equation}\label{eq:poihugfaaaaaddddwwqss}
N \left(\varepsilon ; \mathcal{E}_p, \|\cdot\|_q \right)
\leq  \left[\kplus \,  {\sigma_{\mathbb{K}}(k)}^{(\frac{1}{q}-\frac{1}{p})} \, \frac{ \bar \mu_k}{\mu_k}\right]^{\sigma_{\mathbb{K}}(k)},
\end{equation}
which, in turn, is guaranteed by
\begin{equation*}
\left[\kplus \,  {\sigma_{\mathbb{K}}(k)}^{(\frac{1}{q}-\frac{1}{p})} \, \frac{ \bar \mu_k}{\mu_k}\right]^{\sigma_{\mathbb{K}}(k)} 
\xrightarrow{k \to \infty} \infty.
\end{equation*}
It is therefore enough to establish that 
\begin{equation*}
\log \left(\frac{ \bar \mu_k}{\mu_k}\right) + \left(\frac{1}{q}-\frac{1}{p}\right) \log \left(\sigma_{\mathbb{K}}(k)\right) \xrightarrow{k \to \infty} \infty,
\end{equation*}
which holds as a direct consequence of the identity
\begin{equation*}
\log \left(\frac{ \bar \mu_k}{\mu_k}\right) 
\stackrel{(\ref{eq: that's some big boy exp})}{=} \frac{1}{k} \sum_{n=1}^k \left[\psi(k) - \psi(n)\right]
\geq \kappa (k-1),
\end{equation*}
for some $\kappa>0$ independent of $k$, with the inequality relying on the assumption that $\psi$ is a decay rate function and therefore grows at least linearly.

Finally, $\lim_{\varepsilon \to 0} d_\varepsilon=\infty$ is an immediate consequence of the right-hand side in (\ref{eq:poihugfaaaaaddddwwqss}) being finite for all $k\in \mathbb{N}^*$, while the left-hand side goes to infinity as $\varepsilon$ approaches zero.
\end{proof}

\subsubsection{Statement and Proof of Lemma \ref{lem:poiuhgfyyyyyyy2}}\label{sec:proofpoiuhgfyyyyyyy2}

\begin{lemma}\label{lem:poiuhgfyyyyyyy2}
Let $s$ be a positive real number and let $s'\in \mathbb{R}$ be such that $\lvert s' \rvert < s$.
Consider the $2 \pi$-periodic function $f$ analytic on the strip $\mathcal{S}$ of width $s$ from Definition \ref{def:perfctonstrip}.
Then, the Fourier series coefficients of the function
\begin{equation*}
x \mapsto f(x +is')
\quad \text{are given by } \quad 
\left\{ a_k e^{-ks'} \right\}_{k \in \mathbb{Z}}.
\end{equation*}
\end{lemma}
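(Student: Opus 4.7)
The plan is to compute directly the $k$-th Fourier coefficient
\[
b_k \coloneqq \frac{1}{2\pi}\int_0^{2\pi} f(x+is')\, e^{-ikx}\, dx
\]
and show that $b_k = a_k e^{-ks'}$ by a standard contour-shifting argument. The key observation is that the auxiliary function $g(z) \coloneqq f(z)\, e^{-ikz}$ is analytic on the strip $\mathcal{S}$ (since $f$ is, by hypothesis) and satisfies $g(z+2\pi) = g(z)$ for every $z \in \mathcal{S}$, because both $f$ is $2\pi$-periodic on $\mathcal{S}$ (by the identity theorem, as noted in the remark following Definition \ref{def:perfctonstrip}) and $e^{-ikz}$ is $2\pi$-periodic for $k \in \mathbb{Z}$.

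The core step is to apply Cauchy's theorem to $g$ on the rectangular contour with vertices $0,\, 2\pi,\, 2\pi + is',\, is'$, which lies entirely inside the strip $\mathcal{S}$ since $|s'| < s$. Since $g$ is analytic on (a neighborhood of) this rectangle, the contour integral vanishes. The two vertical sides contribute
\[
\int_0^{s'} g(2\pi + it)\, i\, dt \quad \text{and} \quad -\int_0^{s'} g(it)\, i\, dt,
\]
which cancel exactly by the $2\pi$-periodicity $g(2\pi+it) = g(it)$. Consequently, the two horizontal integrals coincide:
\[
\int_0^{2\pi} g(x)\, dx \;=\; \int_0^{2\pi} g(x+is')\, dx.
\]

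Unpacking the definition of $g$, the left-hand side equals $2\pi a_k$ by definition of the Fourier coefficients of $f$, while the right-hand side equals
\[
\int_0^{2\pi} f(x+is')\, e^{-ik(x+is')}\, dx \;=\; e^{ks'}\int_0^{2\pi} f(x+is')\, e^{-ikx}\, dx \;=\; 2\pi\, e^{ks'}\, b_k.
\]
Solving for $b_k$ yields $b_k = a_k\, e^{-ks'}$, which is the desired result.

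The only delicate point is ensuring that the contour manipulation is rigorous: one must check that $f$ (and hence $g$) is actually $2\pi$-periodic on the entire strip $\mathcal{S}$, not just on $\mathbb{R}$, so that the vertical-side cancellation goes through. This is precisely the observation following Definition \ref{def:perfctonstrip}, and the remaining steps are routine applications of Cauchy's theorem. No estimate of tail behavior or convergence of the Fourier series is needed, so the argument is clean and short.
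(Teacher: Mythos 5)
Your proof is correct and follows essentially the same route as the paper's: both apply Cauchy's integral theorem to $z \mapsto f(z)e^{-ikz}$ on the rectangular contour with vertices $0, 2\pi, 2\pi+is', is'$, cancel the vertical sides via the $2\pi$-periodicity of $f$ on the strip, and equate the two horizontal integrals to read off $b_k = a_k e^{-ks'}$. Your computation of the factor $e^{-ik\cdot is'} = e^{ks'}$ and your appeal to the identity theorem for the periodicity on $\mathcal{S}$ are both exactly the points the paper relies on.
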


\begin{proof}[Proof.]
We first define
\begin{equation*}
\gamma \colon t \in [0, 1] \mapsto 
\begin{cases}
8 \pi t, \quad  & t \in [0, 1/4], \\
2 \pi + i s' (4t-1), \quad  &  t \in [1/4, 1/2], \\
8 \pi (3/4-t) + is', \quad  & t \in [1/2, 3/4], \\
4 i s' (1-t), \quad  &  t \in [3/4, 1],
\end{cases}
\end{equation*}
which is a closed contour in $\mathcal{S}$ (see Figure \ref{fig:contourgamma}).

\begin{center}
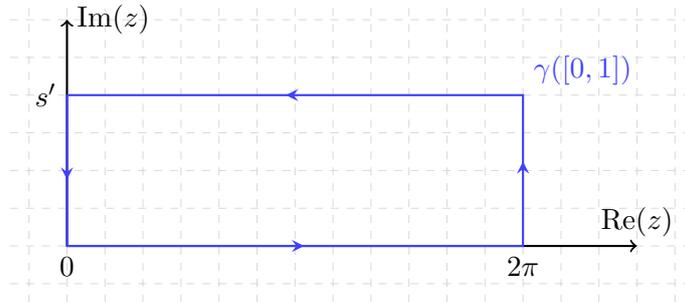

\begin{tikzpicture}[scale=.5, decoration={markings,
    mark=at position .18   with {\arrowreversed[line width=1pt]{stealth}},
    mark=at position .43 with {\arrowreversed[line width=1pt]{stealth}},
    mark=at position .68   with {\arrowreversed[line width=1pt]{stealth}},
    mark=at position .93 with {\arrowreversed[line width=1pt]{stealth}}
  }]

  \draw[help lines, color=gray!30, dashed] (-1.5,-1.5) grid (16.5,6.5);

  \draw[thick, ->] (0,0) -- (15,0) coordinate (xaxis);

  \draw[thick, ->] (0,0) -- (0,6) coordinate (yaxis);

  \node[above] at (xaxis) {$\mathrm{Re}(z)$};

  \node[right]  at (yaxis) {$\mathrm{Im}(z)$};

  \path[draw,blue!75, line width=0.8pt, postaction=decorate] 
        (0,4)   node[left, black] {$s'$}
    --  (12, 4)  node[above right, blue!75] {$\gamma([0, 1])$}
    --  (12, 0)   node[below, black] {$2\pi$}
    --  (0,0)  node[below, black] {$0$} 
    --  (0,4)  ;
\end{tikzpicture}
\captionof{figure}{\small Plot of the contour $\gamma$.}
\label{fig:contourgamma}
\end{center}

\noindent
As the function 
\begin{equation*}
z \mapsto f(z) \, e^{-ikz}
\end{equation*}
is analytic in $\mathcal{S}$ by assumption, we can apply Cauchy's integral theorem on the contour $\gamma$ to get
\begin{equation}\label{eq:uioooooolololo1}
\int_{\gamma([0, 1])} f(z) \, e^{-ikz} \, dz = 0,
\quad \text{for all } k\in \mathbb{Z}.
\end{equation}
Using the $2\pi$-periodicity 
\begin{equation*}
f(z) \, e^{-ikz} 
= f(z+2\pi) \, e^{-ikz}
= f(z+2\pi) \, e^{-ik(z+2\pi)},
\end{equation*}
it follows from (\ref{eq:uioooooolololo1}) that
\begin{equation}\label{eq:uioooooolololo2}
\int_{\gamma([1/4, 1/2])} f(z) \, e^{-ikz} \, dz + \int_{\gamma([3/4, 1])} f(z) \, e^{-ikz} \, dz = 0,
\quad \text{for all } k\in \mathbb{Z}.
\end{equation}
Combining (\ref{eq:uioooooolololo1}) and (\ref{eq:uioooooolololo2}) yields, for all $k\in \mathbb{Z}$,
\begin{align}
a_k = \frac{1}{2\pi}  \int_{0}^{2\pi} f(x) \, e^{-ikx} \, dx 
&= 4\int_{\gamma([0, 1/4])} f(z) \, e^{-ikz} \, dz \label{eq:poiuyttttoiuytre1}\\
&= - 4\int_{\gamma([1/2, 3/4])} f(z) \, e^{-ikz} \, dz \nonumber \\
&= \frac{1}{2\pi}  \int_{0}^{2\pi} f(x+is') \, e^{-ikx+ks'} \, dx.\label{eq:poiuyttttoiuytre2}
\end{align}
Rearranging terms, we obtain
\begin{equation*}
a_k e^{-ks'} = \frac{1}{2\pi}  \int_0^{2\pi}  f(x+is') \, e^{-ikx} \, dx,
\quad \text{for all } k\in \mathbb{Z},
\end{equation*}
which concludes the proof.
\end{proof}

\section{Complements on the Lambert W-function}\label{sec:WLambertappdix}

We recall results on the Lambert $W$-function.

\begin{definition}[Lambert $W$-function]\label{def:Wlambert}
The {Lambert $W$-function} is defined as the unique function satisfying 
\begin{equation*}
W(x)\, e^{W(x)} = x, \quad \text{for all } x \geq 0.
\end{equation*}
\end{definition}

\begin{lemma}\label{lem:equsolLambert}
Let $a$ and $b$ be positive real numbers.
Then, the equation
\begin{equation*}
x = ae^{-x} + b
\end{equation*}
has a unique solution, which is given by 
\begin{equation*}
x = b + W\left(ae^{-b}\right).
\end{equation*}
\end{lemma}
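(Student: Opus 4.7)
The plan is to reduce the given transcendental equation to the defining equation of the Lambert $W$-function via a simple change of variable, handling uniqueness separately by a monotonicity argument.

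First, I would establish uniqueness. Consider the function $f(x) \coloneqq x - ae^{-x} - b$ on $\mathbb{R}$. Its derivative $f'(x) = 1 + ae^{-x}$ is strictly positive everywhere (since $a > 0$), so $f$ is strictly increasing on $\mathbb{R}$. Consequently, the equation $f(x) = 0$, which is equivalent to $x = ae^{-x} + b$, admits at most one real solution.

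For existence and the explicit form, I would perform the substitution $y = x - b$. The original equation then becomes
\begin{equation*}
y + b = ae^{-(y+b)} + b,
\end{equation*}
which simplifies to $y = (ae^{-b}) e^{-y}$, or equivalently $y e^y = ae^{-b}$. Since $a > 0$ and $e^{-b} > 0$, the quantity $ae^{-b}$ lies in $(0, \infty)$, which is within the domain of the Lambert $W$-function as introduced in Definition~\ref{def:Wlambert}. By the defining property of $W$, the unique solution to $y e^y = ae^{-b}$ is $y = W(ae^{-b})$. Substituting back $x = y + b$ yields the claimed formula $x = b + W(ae^{-b})$.

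There is no real obstacle here beyond verifying that the substitution brings the equation exactly into the form $ye^y = \text{const}$ and that the constant lies in the domain of $W$; the computation is elementary. The only minor point worth stating explicitly is that uniqueness of $x$ and uniqueness of $y = W(ae^{-b})$ are consistent, since the two are related by a bijective translation, and this ensures the solution furnished by the $W$-function is the same as the one whose uniqueness was established by monotonicity.
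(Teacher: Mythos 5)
Your proof is correct. The paper itself gives no proof beyond the remark that the lemma ``can be verified by direct substitution'' (i.e., plug $x=b+W(ae^{-b})$ into the equation and check it holds). Your argument is slightly richer: the substitution $y=x-b$ constructively derives the formula by reducing to the defining relation $ye^{y}=ae^{-b}$, and you add a standalone monotonicity argument ($f'(x)=1+ae^{-x}>0$) to establish uniqueness of the original equation independently of the properties of $W$. Both routes rest on the same underlying observation, so the content is essentially the same; your version is just a complete derivation where the paper only sketches a verification.
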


\begin{lemma}\label{lem:lbtseriesexp}
The Lambert $W$-function satisfies the relation
\begin{equation*}
W(x)
= {\ln(x)} - {\ln^{(2)}(x)} + \sum_{n=0}^\infty \sum_{m=1}^\infty \frac{(-1)^n s(n+m, n+1)}{m!} \frac{\left(\ln^{(2)}(x)\right)^m}{\ln^{n+m}(x)},
\end{equation*}
for all $x>0$, where $s(n+m, n+1)$ denotes the Stirling numbers of the first kind.
In particular, one has the asymptotic behavior
\begin{equation}\label{eq:lbtseriesexp}
W(x)
= {\ln(x)} - {\ln^{(2)}(x)} + \frac{\ln^{(2)}(x)}{\ln(x)} \left(1 + o_{x\to\infty}\left(1\right)\right).
\end{equation}
\end{lemma}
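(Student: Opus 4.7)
The plan is to derive the asymptotic series by iterating the defining fixed-point equation for $W$ and then identifying the resulting coefficients via the Lagrange--Bürmann inversion theorem combined with the exponential generating function for the Stirling numbers of the first kind.

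I would start from the identity $W(x)\,e^{W(x)} = x$ (with $W(x)>0$ for $x>0$), take logarithms to get the equivalent relation $W(x) + \ln W(x) = \ln x = L_1$, and then bootstrap: since $W(x)\to\infty$ and $\ln W(x) = o(W(x))$ as $x\to\infty$, one first sees $W(x)\sim L_1$, and plugging this back gives $\ln W(x) = L_2 + o(1)$, hence $W(x) = L_1 - L_2 + o(1)$. Writing $W(x) = L_1 - L_2 + R(x)$ and substituting into $\ln W(x) = L_1 - W(x)$ yields, after elementary simplification using $\ln(L_1 - L_2 + R) = L_2 + \ln(1 + (R-L_2)/L_1)$, the implicit equation
\[
R \;=\; -\ln\!\left(1 + \frac{R - L_2}{L_1}\right).
\]

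The second step is to solve this equation as a formal power series in $1/L_1$. Setting $w = (R-L_2)/L_1$, the equation becomes $h(w) = -L_2$ with $h(w) = L_1\, w + \ln(1+w)$, an analytic function at $w=0$ satisfying $h(0)=0$ and $h'(0) = L_1 + 1 \neq 0$. I would then invoke the Lagrange--Bürmann inversion formula to write
\[
w \;=\; \sum_{n\geq 1} \frac{(-L_2)^n}{n}\,[w^{n-1}]\,\phi(w)^{-n},\qquad \phi(w) \coloneqq \frac{h(w)}{w} = L_1 + \frac{\ln(1+w)}{w}.
\]
The third step is to extract these coefficients in a form amenable to identification with Stirling numbers. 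Factoring $L_1$ out of $\phi$ and expanding $(1 + \ln(1+w)/(L_1 w))^{-n}$ via the generalized binomial theorem gives
\[
\phi(w)^{-n} \;=\; \sum_{m\geq 0} (-1)^m \binom{n+m-1}{m}\,\frac{(\ln(1+w))^m}{L_1^{n+m}\, w^m},
\]
and then the classical exponential generating function identity $(\ln(1+w))^m = m!\sum_{k\geq m} s(k,m)\, w^k / k!$ pins down $[w^{n+m-1}](\ln(1+w))^m = m!\,s(n+m-1, m)/(n+m-1)!$. Using $\binom{n+m-1}{m}\,m!/(n+m-1)! = 1/(n-1)!$, assembling $R = L_2 + L_1\,w$, observing that the $(n,m)=(1,0)$ contribution exactly cancels the leading $L_2$, and reindexing via $n' = m-1$, $m' = n$ then produces the stated double sum (modulo the sign convention chosen for the Stirling numbers of the first kind).

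Finally, the asymptotic identity \eqref{eq:lbtseriesexp} is an immediate consequence: the term with $(n',m') = (0,1)$ contributes exactly $L_2/L_1$ (since $s(1,1)=1$), and each remaining term $L_2^{m'}/L_1^{n'+m'}$ carries at least one additional factor of either $L_2/L_1\to 0$ (when $m'\geq 2$) or $1/L_1 \to 0$ (when $n'\geq 1$), hence is $o(L_2/L_1)$ as $x\to\infty$. I expect the main obstacle to be the bookkeeping in the third step: specifically, reconciling the signs arising from the $(-L_2)^n$ prefactor in Lagrange inversion, the $(-1)^m$ from the binomial expansion, and the signed vs.\ unsigned convention for $s(n+m,n+1)$, so that the reindexed double sum matches the one in the statement term by term.
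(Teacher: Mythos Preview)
The paper does not actually prove this lemma; it simply cites de Bruijn's \emph{Asymptotic Methods in Analysis} (Chapter~2.4) and Corless et al.\ for the result. Your proposal therefore goes well beyond what the paper itself provides. The approach you outline --- bootstrapping from $W + \ln W = \ln x$ to obtain the implicit equation $R = -\ln(1 + (R-L_2)/L_1)$ for the remainder, then inverting via Lagrange--B\"urmann and identifying the coefficients through the exponential generating function for the Stirling numbers of the first kind --- is precisely the standard derivation found in those references (de Bruijn proceeds by direct iteration, while the closed-form Stirling-number identification goes back to Comtet and is reproduced in Corless et al.). Your sketch is correct, and your anticipated difficulty is the right one: the sign and index bookkeeping in matching the Lagrange coefficients to $s(n+m,n+1)$ is where care is needed, but there is no conceptual gap.
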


\noindent
We refer to \cite{corlessLambertFunction1996} for further material on the Lambert $W$-function and to \cite[Chapter 2.4]{de1981asymptotic} for a proof of Lemma \ref{lem:lbtseriesexp}. Lemma \ref{lem:equsolLambert} can be verified by direct substitution.

\end{document}